\documentclass[reqno,16pt]{amsart}
\usepackage[dvipdfmx]{graphicx}
\usepackage[dvips]{color}
\usepackage{amsmath,amsfonts,amssymb,amsthm,amscd}
\usepackage{epic}
\usepackage{eepic}
\usepackage{longtable}
\usepackage{array}
\usepackage{comment}

\usepackage{amsmath}
\usepackage{graphicx}
\usepackage{amsfonts}
\usepackage{amsthm}
\usepackage{mathrsfs}
\usepackage{bigints}
\usepackage{amssymb}
\usepackage{booktabs}
\usepackage{xspace}
\usepackage{tikz}
\usepackage{tikz-cd}
\usepackage{enumerate}
\usepackage{hyperref}
\usepackage{upgreek}
\usepackage{verbatim}
\usepackage{mathtools}
\mathtoolsset{showonlyrefs}

\usepackage[font=small]{caption}
\usepackage{ytableau}
\usepackage[vcentermath, enableskew]{youngtab}
\usepackage{hyperref}
\hypersetup{colorlinks,linkcolor={red},citecolor={olive},urlcolor={red}}

\usepackage{tikz}

\usepackage{enumitem}

\newcommand{\R}{\mathbb{R}}

\newcommand{\N}{\mathbb{N}}

\newcommand{\J}{[J]}

\newcommand{\wmass}{L}

\newcommand{\load}{\varrho}

\newcommand{\ar}{\alpha} 
\newcommand{\ap}{A} 

\newcommand{\inta}{u} 
\newcommand{\iinta}{U}
\newcommand{\pat}{\ell} 
\newcommand{\pd}{\vartheta} 
\newcommand{\ser}{v} 
\newcommand{\tm}{Z} 
\newcommand{\ssp}{\mathcal{Z}} 
\newcommand{\fl}{\zeta} 
\newcommand{\flm}{z} 

\newcommand{\avg}{H} 
\newcommand{\mart}{Y} 
\newcommand{\othermart}{\mathcal{Y}}

\newcommand{\sr}{\mu} 

\newcommand{\M}{\mathbf{M}}

\theoremstyle{definition}

\theoremstyle{definition}

\theoremstyle{definition}
\newtheorem{defi}{Definition}[section]
\theoremstyle{plain}
\newtheorem{lem}{Lemma}[section]
\theoremstyle{plain}
\newtheorem{prop}{Proposition}[section]
\theoremstyle{plain}
\newtheorem{thm}{Theorem}[section]
\theoremstyle{plain}

\theoremstyle{definition}
\newtheorem{rem}{Remark}[section]
\theoremstyle{definition}

\theoremstyle{definition}
\newtheorem{assumption}{Assumption}
\newcommand\numberthis{\addtocounter{equation}{1}\tag{\theequation}}
\begin{document}

\title[Diffusion Approximation of PPS Queue with Reneging]{Fluid and Diffusion Approximation of a Multiclass Proportional Processor Sharing Queue with Reneging and General Distributions}

\author{Eva Loeser}
\address{Eva Loeser, Department of Statistics and Operations Research, University of North Carolina at Chapel Hill, 204 E Cameron Ave, Chapel Hill, NC, 27514, USA}
\email{\texttt{ehloeser@unc.edu}}

\begin{abstract}
In this paper, we resolve the long open problem of the diffusion approximation for a generally distributed processor sharing queue with reneging. In particular, we first extend the known fluid limit for a single class processor sharing queue with reneging to the multiclass proportional processor sharing queue with reneging.
We prove uniqueness of fluid model solutions and convergence of fluid scaled models to fluid model solutions.
Next, we show that, under diffusion-scaling, the models are tight, and that subsequential limits can be uniquely characterized by a certain system of SDEs driven by a Gaussian sheet on $\R_+^2$ with a nontrivial covariation structure.

\end{abstract}

\maketitle

\section{Introduction}

Processor–sharing (PS) queues are a canonical model of fair resource allocation in communications, computing, and service systems. The processor sharing queue has received much study. In \cite{gromollpuhawilliams}, Gromoll, Puha, and Williams established the fluid limit for the generally distributed, single-class processor sharing queue without reneging.
Later, in \cite{gromollzwart}, Gromoll, Robert, and Zwart established the fluid limit for a single-class, generally distributed processor sharing queue with impatience.
Gromoll and Kruk also obtained a heavy traffic limit for the single-class processor sharing queue that tracks impatience \cite{gromollkruk}.
However, they removed the reneging from the model, opting for ``soft deadlines," in order to use techniques from heavy traffic approximations.
In particular, a state space collapse argument is used to obtain a diffusion approximation for the measure-valued state descriptor by lifting the diffusion limit of the workload process into the space of finite radon measures. To make this argument tractable, the authors assume {soft deadlines}, meaning that jobs do not leave the system when their patience times expire. This assumption allows the diffusion limit of the workload process to coincide with that of a classical PS queue without reneging, described by a reflected Brownian motion. The case with {firm deadlines} (i.e., reneging) is left for future work and described as exhibiting `very different behavior.''
In this work, we use a more classical framework for central limit theorems for systems of this type, following the outline given in \cite{loeser2025diffusionlimitsmeasurevaluedqueueing}.
With this methodology, we will extend the results from \cite{gromollzwart} to a multi-class processor sharing queue with reneging and obtain the diffusion approximation with reneging (hard deadlines).

We consider a single server, multiclass proportional processor sharing (PPS) queue with reneging and general interarrival, service, and patience distributions. There are \(J\) classes of jobs, with PPS weights \(p_1,\dots,p_J>0\), \(\sum_{j=1}^J p_j=1\). Jobs of class $j$ arrive according to a renewal process of rate \(\alpha_j\); upon arrival the $i$th job of class $j$ draws i.i.d.\ service and patience times \((\ser^j_i,\pat^j_i)\) from a general distribution \(\vartheta_j\). A job of class \(j\) present at time \(t\) is served at rate \(p_j / \sum_{i=1}^J p_i Z_i(t)\), where \(Z_j(t)\) is the number of class $j$ jobs in the system at time $t$; it abandons when its residual patience hits zero. The state of the class-$j$ jobs in the queue is described by a {measure–valued} process that places a point mass at the residual service and patience times for each job in the $j$th queue:
\[
\ssp_j(t)=\sum \delta_{(\text{res.\ service},\ \text{res.\ patience})}^+,
\qquad j\in[J].
\]
We write \(\boldsymbol{\ssp}(t)=(\ssp_1(t),\dots,\ssp_J(t))\). Define the linear map on $\wmass:\R_+^J\rightarrow\R_+,$
\begin{equation}
\label{wmassdef}
\wmass(\boldsymbol{z})=\sum_{j=1}^J p_j z_j.
\end{equation}
We also define the total mass process for the $j$th queue:
$$\tm_j(t):= \langle 1, \ssp_j(t) \rangle, \hspace{5mm} t\geq 0,$$
and once again write $\boldsymbol{\tm}(t):=(\tm_1(t),...,\tm_J(t)).$
Then we may define the cumulative {service potential} between times $s$ and $t$ as
\begin{equation}
    \label{eq: gdef}
    G(s,t)=\int_s^t 1_{\{\wmass(\boldsymbol{Z}(u)) \neq 0\}}\big[\wmass(\boldsymbol{Z}(u))\big]^{-1}\,du.
\end{equation}
For ease of notation, we will often denote the cumulative service potential up to time $t\geq 0$ as $G(t):= G(0,t).$
We will work in the overloaded regime
\(
\rho:=\sum_{j=1}^J \alpha_j \,\mathbb E[\ser^j_1] > 1,
\)
use fluid and diffusion scalings in the Skorokhod \(J_1\) topology on the appropriate path spaces, and allow general renewal inputs with finite third moments.

\medskip

\noindent\textbf{Contributions.}
\begin{enumerate}[label=(C\arabic*)]
\item \emph{Fluid model:} We formulate a deterministic fluid model for \(\boldsymbol{\ssp}(\cdot)\) as a transport equation driven by the limiting service potential function \(\bar{G}(\cdot)\) and the joint service time and patience time laws \(\{\vartheta_j\}\). We prove existence and uniqueness of fluid model solutions for any nontrivial initial condition and show that the fluid–scaled stochastic systems converge to this solution in \(D([0,\infty), \M^J)\), where $\M$ is the set of positive, finite Borel measures on $\R_+^2.$ In the single–class case, our fluid equation reduces to the model of \cite{gromollzwart}.

\item \emph{Diffusion model:} We formulate a diffusion model solution for the limiting diffusion-scaled fluctuations around the fluid model. The diffusion-model solution will be a random rcll path that takes values in the space of tempered distributions. Its projections onto each Schwartz function will be characterized by a system of SDEs that is driven by a heterogeneous Gaussian noise field with a nontrivial covariance structure. We prove uniqueness of diffusion model solutions and convergence of the centered and diffusion-scaled stochastic systems to diffusion-model solutions.
\end{enumerate}
\medskip

\noindent\textbf{Methodology.} First, we develop a renewal–driven martingale decomposition following the framework in \cite{loeser2025diffusionlimitsmeasurevaluedqueueing}. This gives rise to a family of multi–parameter martingales indexed by translated test functions, which we obtain Gaussian limits for using the tools in \cite{loeser2025diffusionlimitsmeasurevaluedqueueing}. 
Together with the theory for stochastic–integral limits for renewal processes developed in \cite{loeser2025diffusionlimitsmeasurevaluedqueueing}, this yields the diffusion limit results.

\medskip

\noindent\textbf{Relation to prior work.}
Gromoll, Kruk, Puha, and Williams analyzed PS queues using compactness-uniqueness arguments, workload lifts, and state–space collapse in settings without reneging or with soft deadlines \cite{gromoll,gromollkruk,gromollpuhawilliams}. Gromoll-Robert-Zwart established a single–class fluid model with reneging \cite{gromollzwart}. Our results {extend} the fluid picture to multiclass PPS and, crucially, provide the {first diffusion approximation} for PS with {firm deadlines} and general renewal data. We note that, in some areas, we have much stronger assumptions than the previous body of work on generally distributed PS queues. In particular, for the diffusion approximation, we have put significant regularity conditions on both the fluid-model initial condition and the incoming patience-time distributions in the form of twice differentiable CDFs and PDFs. For the fluid model, we have restricted to the case of a nonzero initial measure and assumed Lipschitz continuous marginal CDFs for the initial measures and the patience-time distributions in order to more easily obtain uniqueness of solutions. We note that the case of the zero initial measure required significant analysis in the uniqueness of solutions proof in \cite{gromollzwart}.
The author believes that the results in this paper may be able to be extended to less regular inputs and the zero initial condition case, but chose to first explore the fluid and diffusion limits in the current setting.

An important related line of work on diffusion limits for
measure-valued queueing processes is due to Kaspi and Ramanan \cite{KaspiRamanan2013SPDE}. They study a many-server queue with generally
distributed service times, using a measure-valued state descriptor
that tracks the ages of customers in service, and establish a
functional central limit theorem whose infinite-dimensional component
can be characterized by an SPDE. Their analysis, like ours, makes use
of test-function methods and martingales obtained by subtracting
appropriate compensators. Thus, although the queueing model and
scaling regime are different, their work provides an important
precedent for the use of measure-valued martingale methods to obtain
diffusion approximations for queueing systems with generally
distributed primitives.

\medskip

\noindent\textbf{Organization.}
The model and notation are specified in the remainder of this section. We state our main results in \S \ref{sec: main results}.
We prove the fluid model results in \S \ref{sec: fluid proofs}.
We prove the diffusion model results in \S \ref{sec: diffusion proofs}.

\subsection{Notation} We shall use the following notation throughout the paper.
Let $\N$ denote the set of strictly positive integers, $\{1,2,....\},$ and let $\N_0= \N \cup \{0\}$. 
For a positive integer $N,$ let $[N]$ denote the set $\{1,..., N\}.$ 
For $x\in \R$ we denote the positive part of $x$ by $x^+:=x\vee 0.$ 
For a finite set $A \subset\R_+,$ we denote the $i$th smallest element of $A$ by $A_{\{i\}}$. 
Let $\pi_x(x,y):=x$ and $\pi_y(x,y):=y$ denote the component projections on $\R_+^2.$
We denote the zero vector in any vector space by $\mathbf{0}$. 
For a vector $\boldsymbol{x}\in \R^d,$ we write $\boldsymbol{x}>\boldsymbol{0}$ if and only if $x_i>0$ for $i=1,...,d.$  
For a set $B \in \R_+,$ we write $mB$ to denote $\{mx: x \in B\}$.
For $X=\R$ or $X= \R_+$, we denote the set of bounded continuous functions defined on $X$ and taking values in $\R$ by $\mathbf{C}_b(X).$
The set of functions in $\mathbf{C}_b(X)$ that have bounded continuous derivatives up to order $n\geq 1$ is denoted by $\mathbf{C}_b^n(X).$
For a function $f: \R^n \rightarrow \R$, we write $f_i$ to denote the first partial derivative in the $i$th for each $i \in [n].$
For a function $f: \R_+^n\rightarrow \R,$ write $f^+:= 1_{\{\boldsymbol{x}>0\}}f(\boldsymbol{x})$.
The set of Schwartz functions on $X$ will be denoted $\mathscr{S}(X).$
We denote the dual of the Schwartz space on $X$ as $\mathscr{S}'(X).$
For $T \geq 0$ and a bounded function $f:\R_+\rightarrow \R,$ we write $||f||_T$ for $\sup_{t\in[0,T]}|f(t)|$ and $||f||$ for $||f||_{[0,\infty)}.$ 
We take $\sup\emptyset$ to be $0$ and $\inf \emptyset$ to be $+\infty.$
Let $\R_+=[0,\infty),$ and consider it with the Borel $\sigma$-algebra $\mathscr{B}(\R_+).$ 
We denote the set of positive, finite measures on $(\R_+^n, \mathscr{B}(\R_+^n))$ by $\M(\R^n_+).$ 
We endow $\M$ with the topology of weak convergence of measures.
If $\xi \in \M(\R^n_+)$ and $f$ is a Borel measurable function on $\R_+^n$ that is integrable with respect to $\xi$, we let $\langle f, \xi\rangle := \int_{\R_+^n}fd\xi.$ 
For $\mu\in\boldsymbol{M}$, define the y-tailed CDF to be
        $F^\mu(x,t)
        :=
        \mu\bigl([0,x]\times(t,\infty)\bigr),
        \qquad x,t\geq0.$
If $F$ is a function of bounded variation and $g$ is integrable with respect to $\mu_F,$ the Lebesgue-Stieltjes measure associated to the function $F,$ then we denote $\int_{(s,t]} g d\mu_F$ as $\int_s^t gdF.$
We denote the space of functions from $[0,\infty)$ to $\R^d$ that are right continuous with finite left limits by $D([0,\infty),\R^d)$. 
We endow $D([0,\infty),\R^d)$ with the Skorokhod-$J_1$ topology, under which it is a Polish space. 
We denote $\delta_{(x,y)}^+:= 1_{\{x,y>0\}}\delta_{(x,y)}.$
We will commonly denote a vector by using a bold symbol. 
For example, if we have introduced $x_1,...,x_d,$ then $\boldsymbol{x}$ will be $(x_1,...,x_d)^{\bot}.$ Similarly, if we have also introduced $y_1,...,y_d,$ then $\boldsymbol{xy}$ will be $(x_1y_1,...,x_dy_d)^{\bot},$ and so on.
If $\boldsymbol{\nu}\in \M^d$ for some $d \in \N,$ and $\boldsymbol{f}\in \mathscr{B}(\R_+)^d,$ then we denote the vector $(\langle f_1, \nu_1\rangle, ...,\langle f_d, \nu_d\rangle )^{\bot}$ as $\langle \boldsymbol{f}, \boldsymbol{\nu}\rangle.$
Similarly, we denote $(\langle 1, \nu_1\rangle, ...,\langle 1, \nu_d\rangle )^{\bot}$ as $\langle \boldsymbol{1}, \boldsymbol{\nu}\rangle$ and $(\langle \chi, \nu_1\rangle, ...,\langle \chi, \nu_d\rangle )^{\bot}$ as $\langle \boldsymbol{\chi}, \boldsymbol{\nu}\rangle.$

\subsection{Model Setup}
\label{sec: model setup}

\paragraph{(i) Arrivals:} For each $j \in [J]$, let $\inta_0^j$ be the first arrival time to class $j$, and $\{\inta_i^j\}_{i=1}^\infty$ be the i.i.d.\ interarrival times.
Then $\iinta_i^j := \sum_{l=0}^{i-1} \inta_l^j$ is the arrival time of the $i$th job. The cumulative arrival process is
\[
\ap_j(t) := \sup\left\{ i \in \N : \iinta_i^j \leq t \right\}.
\]
The vector of arrival processes is denoted by
\[
\boldsymbol{\ap}(\cdot) = (\ap_1(\cdot), \ldots, \ap_J(\cdot)).
\]

\paragraph{(ii) Service and patience times:} For each class $j \in \J$, we let $\{(\ser_i^j, \pat_i^j)\}_{i=1}^\infty$, with $(\ser_i^j, \pat_i^j) \sim \vartheta_j$, be the i.i.d.\ sequence of initial service and patience times of jobs of class $j$.

\paragraph{(iii) Proportional Processor Sharing Service Discipline:} Each class of jobs $j \in [J]$ is assigned a priority weight $p_j$ such that $\sum_{j=1}^Jp_j=1.$
Then, if $\boldsymbol{\flm} = (\flm_1, \ldots, \flm_J)$ is such that $\flm_j$ is the number of jobs of type $j$ present, then each job of type $j$ will receive service at rate $
\frac{p_j }{\sum_{i=1}^J p_i \flm_i}
.$
\paragraph{(iv) Initial Conditions:} Let $\boldsymbol{\tm}_0 = (\tm_{0,1}, \ldots, \tm_{0,J})$ denote the initial queue lengths. Let $\{(\tilde{\pat}_{i}^j, \tilde{\ser}_i^j)\}_{i=1}^{\infty}$ be the remaining patience time of the $i$th job of type $j$ in the system at time $0$, where $\{(\tilde{\pat}_{i}^j,\tilde{\ser}_i^j)\}_{i=1}^{\infty}$ are i.i.d..

\paragraph{(v) State Descriptor:}
We now more rigorously define the measure-valued process $\ssp_j(t)$ for $j \in [J]$ tracks the remaining service and patience times of jobs in queue $j$ at time $t$
\begin{align}
\ssp_j(t) &:= \sum_{i=1}^{\tm_{0,j}} \delta_{(\tilde{\ser}_i^j-p_jG(0,t),\tilde{\pat}_{i}^j - t)}^+
+ \sum_{i=1}^{\ap_j(t)} \delta_{(\ser_i^j-p_jG(\iinta_i^j,t),\pat_i^j+\iinta_i^j - t)}^+, \hspace{4mm}t\geq0, \numberthis
\label{statespacedescriptordefequation}
\end{align}
with $\boldsymbol{\ssp}(t):= (\ssp_1(t),...,\ssp_J(t))$, where the potential service provided between times $s$ and $t,$ $G(s,t)$ is as defined in \eqref{eq: gdef}. 
In order to obtain a Markovian state descriptor using this measure-valued representation, we let $a_j(t)$ be the time until the next arrival to class $j$ at time $t \geq 0$ and $\boldsymbol{a}(t):= (a_1(t),...,a_J(t))$, $t \geq 0.$ Then we may define the state descriptor to be $(\boldsymbol{\ssp}(t),\boldsymbol{a}(t))$ for $t\geq0.$

We note that the equation \eqref{statespacedescriptordefequation} has some equivalent representations.
First, we may think of it as a mass-transport equation.
We will denote translation by $(a,b)\in \R_+^2$ of a function $f: \R_+^2\rightarrow \R$ as follows:
\begin{equation}
t_{a,b}f(x,y):= 
\begin{cases}
    f((x-a), (y-b)) & x>a, y >b\\
    0 & \text{otherwise}
\end{cases}
    \label{translationnotation}
\end{equation}
Then, it follows from \eqref{statespacedescriptordefequation} that for a Borel-measurable function $f$ and $0 \leq s \leq t,$
\begin{equation}
\langle f, \ssp_j(t)\rangle = \langle t_{p_jG(s,t),t-s}f, \ssp_j(s) \rangle + \sum_{i=A_j(s)+1}^{A_j(t)} \langle t_{p_jG(\iinta_i^j,t),t-U_i^j}f, \delta^+_{(\ser_i^j,\pat_i^j)} \rangle.
\label{masstransportequationdef}
\end{equation}
Note that, almost surely, all measurable functions are integrable with respect to $\bar{\ssp}^m_j(t)$ for each $t\geq 0,$ as it is a finite sum of weighted $\delta$-masses.
This lends itself to the following more classical transport equation formulation:
\begin{lem}
    Let $f\in C_b^1(\R_+^3)$ such that, for each $t\geq 0,$ $f(t,\cdot,\cdot)$ and its first derivatives $\equiv 0$ on $\partial \R_+^2.$ Then, almost surely, for each $t \geq 0,j \in \J$
    \begin{align*}
        \langle f(t,\cdot,\cdot), \ssp_j(t)\rangle &= \langle f(0,\cdot,\cdot), \ssp_j(0) \rangle + \int_0^t \langle f_1(s, \cdot, \cdot), \ssp_j(s) \rangle ds - \int_0^t \langle f_3(s,\cdot,\cdot),\ssp_j(s)\rangle ds\\&- \int_0^t 1_{\{\boldsymbol{\tm}(s) \neq \boldsymbol{0}\}} \frac{p_j}{\wmass(\boldsymbol{\tm}(s))}\langle f_2(s,\cdot,\cdot), \ssp_j(s) \rangle ds
        +\sum_{i=A_j(0)+1}^{A_j(t)} f(\iinta_i^j,\ser_i^j,\pat_i^j)\numberthis\label{differentialmasstransport}
    \end{align*}
\end{lem}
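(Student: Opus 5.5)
Since $\ssp_j(t)$ is almost surely a finite sum of point masses, I will prove the identity particle by particle and then sum. Fix $j$ and $t$. On the almost sure event that $\ap_j(t)$ and $\tm_{0,j}$ are finite, \eqref{statespacedescriptordefequation} writes $\langle f(t,\cdot,\cdot),\ssp_j(t)\rangle$ as a finite sum of terms $f(t,x(t),y(t))1_{\{x(t)>0,\,y(t)>0\}}$. For an initial job we have $x(s)=\tilde\ser^j_i-p_jG(0,s)$ and $y(s)=\tilde\pat^j_i-s$. For a job arriving at $\iinta^j_i\le t$ we have $x(s)=\ser^j_i-p_jG(\iinta^j_i,s)$ and $y(s)=\pat^j_i+\iinta^j_i-s$ for $s\ge\iinta^j_i$. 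By \eqref{eq: gdef}, $G(\cdot,s)$ is absolutely continuous in $s$, with derivative $1_{\{\boldsymbol{\tm}(s)\neq\boldsymbol{0}\}}/\wmass(\boldsymbol{\tm}(s))$. This holds because $\wmass(\boldsymbol{\tm}(s))\neq0$ iff $\boldsymbol{\tm}(s)\neq\boldsymbol 0$, since $p_j>0$. Moreover, on any compact interval $\boldsymbol{\tm}$ takes finitely many values and is piecewise constant, so the derivative is bounded there. Hence each trajectory $s\mapsto(x(s),y(s))$ is absolutely continuous, with $\dot x=-p_j1_{\{\boldsymbol{\tm}(s)\neq \boldsymbol 0\}}/\wmass(\boldsymbol{\tm}(s))$ and $\dot y=-1$.

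Next, for a single job I define $h(s):=f(s,x(s),y(s))$ and use the extension of $f$ to $\R\times\R^2$ by zero outside $\R_+^2$ in the space variables. The hypothesis that $f(s,\cdot,\cdot)$ and its first derivatives vanish on $\partial\R_+^2$ makes this extension $C^1$. It also lets me drop the indicator $1_{\{x>0,y>0\}}$. The reason is that once $x(s)\le0$ or $y(s)\le0$ the job has left, and since $x,y$ are nonincreasing the job never returns, so the extended $f$ gives $0$ there. This matches the translation convention \eqref{translationnotation}. The chain rule for $C^1$ functions composed with absolutely continuous paths then gives
\[
h(t)-h(r)=\int_r^t\Big(f_1-1_{\{\boldsymbol{\tm}(s)\neq\boldsymbol 0\}}\tfrac{p_j}{\wmass(\boldsymbol{\tm}(s))}f_2-f_3\Big)(s,x(s),y(s))\,ds .
\]
Here $r=0$ for initial jobs and $r=\iinta^j_i$ for arriving jobs. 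For an arriving job, $h(\iinta^j_i)=f(\iinta^j_i,\ser^j_i,\pat^j_i)$, which produces the jump sum. Jobs with $\iinta^j_i\le 0$ are excluded because the sum starts at $A_j(0)+1$.

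Finally, I sum over the finitely many jobs present by time $t$. The integrand at time $s$ summed over jobs equals $\langle f_1(s,\cdot,\cdot)-\cdots,\ssp_j(s)\rangle$. Jobs not yet arrived contribute nothing before their arrival time, and departed jobs contribute zero. Exchanging the finite sum with the integral yields \eqref{differentialmasstransport}.

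The main obstacle is the boundary crossing, where a job's residual service or patience hits zero. Without the vanishing hypothesis, $h$ would jump there, and the identity would acquire departure terms. I will handle this carefully through the zero extension and the $C^1$ check across $\partial\R_+^2$. I also need to address the a.s. requirement uniformly in $t$: finiteness of arrivals on compacts holds a.s. simultaneously for all $t$, so a single null set suffices for all $t\ge0$ and $j\in\J$.
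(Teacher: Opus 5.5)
Your proof is correct, but it follows a different route from the paper's.

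\textbf{The paper's route.} The paper works at the level of the measure. It partitions $[0,t]$ at all arrival and departure times, so that $\boldsymbol{\tm}$ is constant and $G$ is linear on each piece $[\tau_i,\tau_{i+1})$. It then writes each increment of $\langle f(s,\cdot,\cdot),\ssp_j(s)\rangle$ as a telescoping Riemann sum. This sum is split into a time-variation part and a spatial-transport part, and the latter is computed through the mass-transport identity \eqref{masstransportequationdef}. The limits are identified using the mean value theorem, Riemann integrability, and the boundary conditions on $f$. Arrivals enter only as jumps at right endpoints $\tau_{i+1}$.

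\textbf{Your route.} You decompose particle by particle. You extend $f$ by zero in the space variables to a $C^1$ function on $\R_+\times\R^2$; this is exactly where the vanishing of $f$ and its first derivatives on $\partial\R_+^2$ is used. The extension absorbs the indicator in $\delta^+$ and makes departures jump-free. You then apply the fundamental theorem of calculus along each piecewise-linear trajectory and sum finitely many scalar identities. Exchanging the sum and the integral is trivial because $i\le A_j(s)$ if and only if $\iinta_i^j\le s$.

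\textbf{Comparison.} Your version is shorter and more elementary:
\begin{itemize}
\item it needs no separate justification that the several limits in the Riemann-sum decomposition exist;
\item it shows precisely why the boundary hypothesis is what is needed, since it is equivalent to the zero extension being $C^1$;
\item it yields a single null set valid for all $t$ at once.
\end{itemize}
One small remark: your monotonicity comment (that departed jobs never return) is not actually needed. The zero extension already vanishes whenever $x\le 0$ or $y\le 0$, and this is all the identity uses. The paper's version exploits the atomic structure of $\ssp_j$ less directly. It stays with the measure-valued process and the transport identity \eqref{masstransportequationdef}, in the same style as the Riemann-sum arguments for measure-valued transport equations that the paper borrows from its references. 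Your argument, by contrast, relies essentially on $\ssp_j(t)$ being a finite sum of point masses.
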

\begin{proof}
 
Fix $T\geq0$ and work on the event
$\Omega_T
:=
\left\{
\sum_{k=1}^J
\bigl(\tm_k(0)+\ap_k(T)\bigr)<\infty
\right\},$
which has probability one. Fix a realization in $\Omega_T$, an
arbitrary $t\in[0,T]$, and $j\in\J$.
For such a realization, there are only finitely many
arrival and departure times in $[0,t]$. Let
\[
0=\tau_0<\tau_1<\cdots<\tau_{N}=t
\]
be the ordered set consisting of $0$, $t$, and all event times in
$(0,t]$.
On each interval $[\tau_i,\tau_{i+1})$, the queue-length vector
$\boldsymbol{\tm}(\cdot)$ is constant. Consequently, there exists a
constant $q_i\geq0$ such that
\[
1_{\{\wmass(\boldsymbol{\tm}(s))\neq0\}}
\frac{1}{\wmass(\boldsymbol{\tm}(s))}
=q_i,
\qquad s\in[\tau_i,\tau_{i+1}),
\]
and hence
\[
G(a,b)=q_i(b-a),
\qquad \tau_i\leq a\leq b\leq\tau_{i+1}.
\]
To obtain equation \eqref{differentialmasstransport}, it suffices to show that, for each $0 \leq i \leq N-1,$ 
\begin{align*}
&\left\langle
f(\tau_{i+1},\cdot,\cdot),\ssp_j(\tau_{i+1})
\right\rangle
-
\left\langle
f(\tau_i,\cdot,\cdot),\ssp_j(\tau_i)
\right\rangle\\
&=
\int_{\tau_i}^{\tau_{i+1}}
\left\langle
f_1(s,\cdot,\cdot),\ssp_j(s)
\right\rangle ds
-p_jq_i
\int_{\tau_i}^{\tau_{i+1}}
\left\langle
f_2(s,\cdot,\cdot),\ssp_j(s)
\right\rangle ds \\&
-
\int_{\tau_i}^{\tau_{i+1}}
\left\langle
f_3(s,\cdot,\cdot),\ssp_j(s)
\right\rangle ds + \sum_{l=A_j(0)+1}^{A_j(t)} 1_{\{\tau_{i+1}=\iinta_l^j\}} f(\iinta_l^j,\ser_l^j,\pat_l^j)
\numberthis \label{eq: intervaltransport}
\end{align*}

For fixed $n \in \N, 0\leq i\leq N-1$, define $\Delta_{i,n}:=\frac{\tau_{i+1}-\tau_i}{n},$ and $
t_{i,m}^n:=\tau_i+m\Delta_{i,n}$ for $m=0,\ldots,n.$ Then we see that
\begin{align}
&\langle f(\tau_{i+1},\cdot,\cdot), \ssp_j(\tau_{i+1})  \rangle - \langle f(\tau_i,\cdot,\cdot), \ssp_j(\tau_i)  \rangle =\\& \hspace{8mm} \lim_{n\rightarrow \infty}\sum_{m=0}^{n-1} \left(\left\langle f\left(t_{i,m}^n + \Delta_{i,n},\cdot,\cdot\right), \ssp_j\left(t_{i,m}^n + \Delta_{i,n}\right)  \right\rangle - \langle f(t_{i,m}^n,\cdot,\cdot), \ssp_j(t_{i,m}^n)  \rangle \right)\label{1}\\
\hspace{8mm} &=\lim_{n\rightarrow \infty}\sum_{m=0}^{n-1} \left(\left\langle f\left(t_{i,m}^n + \Delta_{i,n},\cdot,\cdot\right), \ssp_j\left(t_{i,m}^n + \Delta_{i,n}\right)  \right\rangle - \left\langle f(t_{i,m}^n,\cdot,\cdot), \ssp_j\left(t_{i,m}^n+\Delta_{i,n}\right)  \right\rangle \right)\label{2}\\
&+\lim_{n\rightarrow \infty}\sum_{m=0}^{n-1} \left(\left\langle f\left(t_{i,m}^n ,\cdot,\cdot\right), \ssp_j\left(t_{i,m}^n + \Delta_{i,n}\right)  \right\rangle - \langle f(t_{i,m}^n,\cdot,\cdot), \ssp_j(t_{i,m}^n)  \rangle \right)\label{3}\\
\end{align}
provided that all of the limits exist.
It follows from a standard real analysis argument using the Riemann integrability of $s\rightarrow \langle f_1(s,\cdot,\cdot), \ssp_j(s) \rangle,$ the fact that $\ssp_j(\cdot)$ has bounded total mass on $[0,T]$, and the mean value theorem, that \eqref{2} converges to $\int_{\tau_{i}}^{\tau_{i+1}}\langle f_1(s,\cdot,\cdot), \ssp_j(s)\rangle ds$ (for a detailed version of this real analysis argument, see, e.g. the proof of Lemma 7.2 in \cite{loeserwilliams}).
Applying \eqref{masstransportequationdef}, we see that \eqref{3} is 
\begin{align}
    &\lim_{n\rightarrow \infty}\sum_{m=0}^{n-1} \left(\left\langle f\left(t_{i,m}^n ,\cdot,\cdot\right) ,\ssp_j\left(t_{i,m}^n + \Delta_{i,n}\right)  \right\rangle - \langle f(t_{i,m}^n,\cdot,\cdot), \ssp_j(t_{i,m}^n)  \rangle \right)\\
    &=\lim_{n\rightarrow \infty}\sum_{m=0}^{n-1}\langle t_{p_jq_i\Delta_{i,n},\Delta_{i,n}}f(t_{i,m}^n,\cdot,\cdot)-f(t_{i,m}^n,\cdot,\cdot), \ssp_j(t_{i,m}^n) \rangle\label{4}\\
    &+\lim_{n\rightarrow \infty}\sum_{m=0}^{n-1}\sum_{l=A_j(t_{i,m}^n)+1}^{A_j(t_{i,m}^n +\Delta_{i,n})} \langle t_{p_jG(\iinta_l^{j,m}/m,t_{i,m}^n+\Delta_{i,n}),t_{i,m}^n+\Delta_{i,n}-\iinta_l^{j,m}/m}f(t_{i,m}^n,\cdot,\cdot), \delta^+_{(\ser_l^j,\pat_l^j)}\rangle 
    \end{align}
    again provided that all limits exist.
    Again using the mean value theorem, the boundary conditions on $f$, and
the Riemann integrability of the resulting integrands, we obtain
\begin{align*}
&\lim_{n\to\infty}
\sum_{m=0}^{n-1}
\left\langle
t_{p_jq_i\Delta_{i,n},\,\Delta_{i,n}}
f(t_{i,m}^n,\cdot,\cdot)
-
f(t_{i,m}^n,\cdot,\cdot),
\ssp_j(t_{i,m}^n)
\right\rangle\\
&=
-p_jq_i
\int_{\tau_i}^{\tau_{i+1}}
\left\langle
f_2(s,\cdot,\cdot),\ssp_j(s)
\right\rangle ds
-
\int_{\tau_i}^{\tau_{i+1}}
\left\langle
f_3(s,\cdot,\cdot),\ssp_j(s)
\right\rangle ds.
\end{align*}
   Since there are no arrivals in $(\tau_i,\tau_{i+1})$, only arrivals
at $\tau_{i+1}$ contribute to the second term. For such an arrival,
the spatial translation is zero, while
$t_{i,n-1}^n\uparrow\tau_{i+1}$. Therefore, by continuity of $f$,
the second term converges to
\[
\sum_{\ell=A_j(0)+1}^{A_j(t)}
1_{\{\iinta_\ell^j=\tau_{i+1}\}}
f(\iinta_\ell^j,\ser_\ell^j,\pat_\ell^j).
\]
\end{proof}
\section{Main Results}
\label{sec: main results}
The results in this paper will be in characterizing fluid and diffusion approximations of this system. 
We will be working with a sequence of models as described in \S \ref{sec: model setup} indexed by $m \in \N$.
Throughout the paper, we append the superscript $m$ to denote quantities associated with the $m$th model. For example, the interarrival times for class $j$ are denoted $\{\inta_i^{j,m}\}_{i=1}^{\infty}$.
Some parameters will change with $m$ and others will remain the same.
In particular, the sequences $\{\inta_i^{j,m}\}_{i=1}^{\infty}$ will change with $m,$ but $p_j, j \in \J$ will be fixed, and so will $\pd_j, j \in \J.$
For each $j \in \J,$ there will be an i.i.d. sequence $\{(\ser_i^j,\pat_i^j)\}_{i=1}^{\infty}$ such that the sequence of initial service and patience times for the $m$th system, $\{(\ser_i^{j,m},\pat_i^{j,m})\}_{i=1}^{\infty}$ have the property that $(\ser_i^{j,m},\pat_i^{j,m}):=(\ser_i^j,m\pat_i^j)$ for each $i,m>0.$

The following conditions will be assumed throughout:
\begin{assumption}
\label{assumptions}
\begin{enumerate}[label=(\roman*)]
\item \label{basicassumptions}
For all $j \in [J]$, $m \in \N,$ the arrival rate $\ar_j^m := 1/\mathbb{E}[\inta_1^{j,m}]$, reneging rate $\gamma_j := 1/\mathbb{E}[\pat_1^j]$, and service rate $\sr_j := 1/\mathbb{E}[\ser_1^{j}]$ are all positive and finite.
The expected initial number of jobs in the queue for class $j$, $\mathbb{E}[\tm_{0,j}^m]$, is finite.
Additionally, for each $t \geq 0$, $j \in [J]$,
\[
\sup_{m \in \N} \mathbb{E}[\bar{\ap}_j^m(t)] < \infty. \]
Furthermore, initial service and patience times are positive almost surely. In other words, we assume 
$$\pd_j(\{0\} \times [0,\infty))=\pd_j([0,\infty)\times\{0\}) =0.$$

\item For each $m\in\N$, the collections
\[
\left\{\{\inta_i^{j,m}\}_{i=1}^{\infty}:j\in[J]\right\},
\qquad
\left\{\{(\ser_i^{j,m},\pat_i^{j,m})\}_{i=1}^{\infty}:j\in[J]\right\}
\]
are mutually independent across classes and between the two collections. They are also independent of the initial condition data
$(\boldsymbol{\tm}_0^m,\boldsymbol{a}^m(0))$ and of the collections
\[
\left\{\{(\tilde{\ser}_i^j,\tilde{\pat}_i^j)\}_{i=1}^{\infty}:j\in[J]\right\}.
\]
The collections $\{(\tilde{\ser}_i^j,\tilde{\pat}_i^j)\}_{i=1}^{\infty}$ are also mutually independent across $j\in[J]$. Lastly, we assume that the marginal CDFs of $\pd_j$ are Lipschitz continuous.
\item \label{parametersassumption}
There exists a limiting arrival rate vector $\boldsymbol{\ar} > 0$ such that $\boldsymbol{\ar}^m \to \boldsymbol{\ar}$ as $m \to \infty$. The system is overloaded in the limit, meaning the load parameter
\[
\load := \sum_{j=1}^J \frac{\ar_j}{\sr_j} > 1.
\]

\item \label{fllnassumption1}
For all $j \in [J]$, we assume that
$\frac{\mathbb{E}[\inta_0^{j,m}]}{\sqrt{m}} \to 0$ as $m \to \infty.$

\item \label{fllnassumption2}
For each $j \in [J]$, $\mathbb{E}[\inta_1^{j,m} \cdot 1_{\{\inta_1^{j,m} > m\}}],$ $
\mathbb{E}[\ser_1^{j} \cdot 1_{\{\ser_1^{j} > m\}}],$ $
\mathbb{E}[\pat_1^{j} \cdot 1_{\{\pat_1^{j} > m\}}]$ converge to $0$ as $m \to \infty,$
and the following third moment bounds hold:
\[
 \mathbb{E}[|\ser_1^{j}|^3] < \infty, \qquad
\sup_{m \in \N} \mathbb{E}[|\inta_1^{j,m}|^3] < \infty, \qquad  \mathbb{E}[|\pat_1^{j}|^3] < \infty
\]

\item \label{initialconditionsassumption}
There exists $\epsilon >0$ and a nonzero deterministic vector of continuous measures with Lipschitz continuous marginal CDFs, $\bar{\boldsymbol{\ssp}}_0 = (\bar{\ssp}_{0,1}, \dots, \bar{\ssp}_{0,J})\in \M(\R_+^2)^J,$ such that $\langle x^{1+\epsilon}, \bar{{\ssp}}_{0,j} \rangle,\langle {\pi}_y, \bar{{\ssp}}_{0,j} \rangle < \infty$ for all $j \in [J]$, and
\[
(\bar{\boldsymbol{\ssp}}^m(0), \langle \boldsymbol{x^{1+\epsilon}}, \bar{\boldsymbol{\ssp}}^m(0) \rangle,\langle \boldsymbol{\pi}_y, \bar{\boldsymbol{\ssp}}^m(0) \rangle)
\Rightarrow
(\bar{\boldsymbol{\ssp}}_0, \langle\boldsymbol{{x}^{1+\epsilon}}, \bar{\boldsymbol{\ssp}}_0 \rangle,\langle \boldsymbol{\pi}_y, \bar{\boldsymbol{\ssp}}_0 \rangle), \qquad \text{as } m \to \infty.
\]
In addition, there exists a random variable $\hat{\boldsymbol{\ssp}}_0 \in \mathscr{S}'(\R_+^2)^J$ such that for any $f_1,\dots,f_J \in \mathscr{S}(\R_+^2)$,
\[
(\langle f_1, \hat{\ssp}_1^m(0) \rangle, \dots, \langle f_J, \hat{\ssp}_J^m(0) \rangle)
\Rightarrow
(\langle f_1, \hat{\ssp}_{0,1} \rangle, \dots, \langle f_J, \hat{\ssp}_{0,J} \rangle),
\]
and for any $f \in \mathscr{S}(\R_+^2) \cup \{1_{(0,\infty)\times(0,\infty)}\}$, the functions
\[
F_f^{j,m}(x,y) := \left\langle f((\cdot - x)^+,(\cdot-y)^+), \hat{\ssp}_j^m(0) \right\rangle
\]
converge in probability to a random function
$F_f^j:\R_+^2\to\R$ that is almost surely continuous, where the convergence is in the topology of uniform convergence on compact sets.
\item \label{cltforrenewalassumption}
Let $\sigma_{\ap,j} > 0$ be the limiting standard deviation of $\inta_1^{j,m}$. Define
\[
\hat{\ap}_j^m(t):=\sqrt{m}\left(\bar{\ap}_j^m(t)-\ar_j t\right), \qquad t\geq 0.
\]
Then, for each $j\in[J]$, the diffusion-scaled renewal process
converges in distribution to a Brownian motion with quadratic
variation
\[
\ar_j^3\sigma_{\ap,j}^2t;
\]
see \cite[Theorem 5.11]{chenandyao}.
\end{enumerate}
\end{assumption}
\subsection{Fluid Model Results}
For a sequence of models as described in \S \ref{sec: model setup} indexed by $m \in \N$, the state descriptor for class $j,$ fluid-scaled by $m,$ is defined as follows. For any Borel sets $B,C \subseteq \R_+$, 
\begin{equation}
\bar{\ssp}^{m}_j(t)(B\times C) := \frac{1}{m} \ssp^{m}_j(mt)(B\times mC), \qquad t \geq 0,
\label{scalingwithborelsets}
\end{equation}
or, equivalently, for any bounded Borel measurable function $f: \R_+^2 \to \R$,
\begin{equation}
\langle f, \bar{\ssp}^{m}_j(t) \rangle = \frac{1}{m} \left\langle f\left(\cdot,\frac{1}{m} \cdot \right), \ssp^{m}_j(mt) \right\rangle.
\label{scalingwithfunctions}
\end{equation}

 We use an overbar to denote fluid-scaled processes; for instance, the fluid-scaled arrival process for class $j$ is $\bar{\ap}^m_j(t) := \frac{1}{m} \ap^m_j(mt)$.
 We now define the parameters for our fluid models.
 \begin{defi}[Fluid Model Parameters]
An admissible set of fluid model parameters is a triple
$(\boldsymbol{\ar},\boldsymbol{p},\boldsymbol{\pd})
\in
\R_{>0}^J\times\R_{>0}^J\times\M^J$
such that
$
\sum_{j=1}^J p_j=1,
$
and, for each $j\in\J$, $\pd_j$ is a probability measure satisfying
\[
\langle \pi_x,\pd_j\rangle<\infty,
\qquad
\langle \pi_y,\pd_j\rangle<\infty,
\qquad
\pd_j(\partial\R_+^2)=0.
\]
We further require that
$
\sum_{j=1}^J
\ar_j\langle\pi_x,\pd_j\rangle>1.$
\end{defi}
\begin{defi}[Fluid Model Solution]
\label{notoverdef}
Let $\boldsymbol{\fl}: [0,\infty) \rightarrow \M^J$ be a continuous function. 
Then we say that $\boldsymbol{\fl}$ is a fluid model solution for fluid model parameters $(\boldsymbol{\ar},\boldsymbol{p},\boldsymbol{\pd})$ 
and initial condition $\boldsymbol{\fl}_0 = (\fl_{0,1},...,\fl_{0,J}),$ a nonzero vector of atomless measures, if 

\begin{enumerate}
\item $\boldsymbol{\fl}(0) = \boldsymbol{\fl}_0,$ \label{startsatICdef}
\item $ \fl_j(t)(\partial \R_+^2)  =0$ for each $t\geq0,j \in \J,$
\label{doesntchargeorigindef}
\item for each $t >0,$ $\langle 1, \fl_j(t)\rangle >0$ for some $j \in [J],$ 
\label{overloadedconditiondef}
\item \label{fluidequationdef} and for each $f \in  \mathbf{C}^1_b(\R_+^2) $ such that $f$ and its first derivatives are $0$ on $\partial \R_+^2,$ $j \in \J, t\geq 0,$
\begin{align*}
\langle f, \fl_j(t)\rangle &= \langle f, \fl_j(0) \rangle -\int_0^t \left\langle f_2, \fl_j(s) \right\rangle ds  \quad - \int_0^t    \frac{p_j\langle f_1, \fl_j(s)\rangle}{\sum_{i=1}^Jp_i\langle 1, \fl_i(s) \rangle } ds \\&\hspace{8mm}+ \ar_j  \langle f,  \pd_j\rangle t. \numberthis \label{fluidlimiteqn}
\end{align*}

\end{enumerate}
\end{defi}
\begin{rem}
    One can check that this fluid equation has an equivalent representation 
    as a ``transport equation," which would be a limiting version of \eqref{masstransportequationdef}. 
    In this formulation, in the single-class case, we see that our fluid limit equation is equivalent to that given in \cite{gromollzwart}.
In particular, for each $f \in  \mathbf{C}_b(\R_+^2) $ $j \in \J, t\geq s\geq 0,$
    \begin{equation}
        \langle f, \fl_j(t)\rangle = \langle t_{p_j\bar{G}(s,t),t-s}f, \fl_j(s)\rangle + \int_s^t\ar_j \langle t_{p_j\bar{G}(r,t),t-r}f, \pd_j \rangle dr 
        \label{fluidtransporteqn}
    \end{equation}
    where $\bar{G}(s,t):= \int_s^t \frac{1}{\sum_{i=1}^Jp_i\langle 1, \fl_i(r) \rangle }dr$ for $0 \leq s\leq t.$

    The proof of this follows the form of e.g., the proof of Lemmas 4.1 and 4.3 in \cite{gromollpuhawilliams} or the proof of Lemma 4.1 of \cite{loeserwilliams}, but is only sketched here because the techniques are the same as in those two papers.
    The method would be to extend equation \eqref{fluidlimiteqn} to an integral equation that holds for functions $f\in \mathbf{C}_b^1(\R_+^3)$, such that, for each $t\geq 0,$ the function $f(t,\cdot,\cdot)$ and its first derivatives in $x$ and $y$ are $0$ on $\partial \R_+^2.$
    In particular, we'll find that for such an $f,$
    \begin{align*}
    \langle f(t,\cdot,\cdot), \fl_j(t)\rangle &= \langle f(0,\cdot,\cdot), \fl_j(0) \rangle + \int_0^t \langle f_1(s,\cdot,\cdot), \fl_j(s)\rangle ds -\int_0^t \left\langle f_3(s,\cdot,\cdot), \fl_j(s) \right\rangle ds  \\& - \int_0^t    \frac{p_j\langle f_2(s,\cdot,\cdot), \fl_j(s)\rangle}{\sum_{i=1}^Jp_i\langle 1, \fl_i(s) \rangle } ds 
    + \int_0^t \ar_j  \langle f(s,\cdot,\cdot),  \pd_j\rangle ds. 
    \numberthis
\label{fluidtransportequationdifferential}
\end{align*}
We then substitute in equations of the form $f_t(r,x,y)= t_{p_j\bar{G}(r,t),t-r}g(x,y)$ where $g(x,y)$ is a bounded continuous function such that it and its first partials are $0$ on $\partial \R_+^2.$ 
In this manner, one obtains \eqref{fluidtransporteqn} for such a function $g.$
The author notes that, by choosing an appropriate approximating sequence $g_n \rightarrow 1_{(x,\infty)\times(y,\infty)}$ and noting that $\fl_j$ does not charge $\partial \R_+^2$, one can substitute $1_{(x,\infty)\times(y,\infty)}$ (see the aforementioned argument in \cite{gromollpuhawilliams}).
Applying the monotone class theorem and bounded convergence, one obtains \eqref{fluidtransporteqn} for any bounded Borel measurable function $f.$
\end{rem}
\begin{thm}
    Let $(\boldsymbol{\ar},\boldsymbol{p},\boldsymbol{\pd}) $ be a set of fluid model parameters 
and let $\boldsymbol{\fl}_0 \in \M^J$. Assume further that $\boldsymbol{\pd},\boldsymbol{\fl}_0$ are nonzero vectors of measures whose marginal CDFs are Lipschitz continuous, and that there exists $\epsilon >0$ such that $\langle \boldsymbol{x}^{1+\epsilon},\boldsymbol{\fl}_0\rangle, \langle \pi_y,\boldsymbol{\fl}_0 \rangle <\boldsymbol{\infty}.$ Then there exists a unique fluid model solution for these parameters and initial condition.
\end{thm}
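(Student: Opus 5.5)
Both existence and uniqueness will go through the transport representation \eqref{fluidtransporteqn}. The idea is to reduce everything to a fixed-point problem for the scalar (or $J$-vector) total-mass function $\boldsymbol{z}(t)=\langle \boldsymbol{1},\boldsymbol{\fl}(t)\rangle$. Given any continuous candidate $\boldsymbol{z}:[0,\infty)\to\R_+^J$ with $\wmass(\boldsymbol{z}(t))>0$, set $\bar G_{\boldsymbol z}(s,t)=\int_s^t \wmass(\boldsymbol z(r))^{-1}dr$. Then define measures by the right side of \eqref{fluidtransporteqn}:
\[
\langle f,\fl^{\boldsymbol z}_j(t)\rangle=\langle t_{p_j\bar G_{\boldsymbol z}(0,t),t}f,\fl_{0,j}\rangle+\int_0^t\ar_j\langle t_{p_j\bar G_{\boldsymbol z}(r,t),t-r}f,\pd_j\rangle dr .
\]
A fluid model solution is exactly a fixed point of the map $\Phi(\boldsymbol z)_j(t):=\langle 1,\fl_j^{\boldsymbol z}(t)\rangle$. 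One direction is the Remark. For the converse, differentiate the transport formula along characteristics to recover \eqref{fluidlimiteqn}, using $\pd_j(\partial\R_+^2)=0$ and atomlessness of $\fl_{0,j}$ to get condition \ref{doesntchargeorigindef} and continuity in $t$.

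Explicitly, $\Phi(\boldsymbol z)_j(t)=\fl_{0,j}\bigl((p_j\bar G(0,t),\infty)\times(t,\infty)\bigr)+\ar_j\int_0^t\pd_j\bigl((p_j\bar G(r,t),\infty)\times(t-r,\infty)\bigr)dr$.

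For \textbf{uniqueness} on $[0,T]$, take two solutions with masses $\boldsymbol z,\boldsymbol z'$. First I would get a uniform lower bound $\wmass(\boldsymbol z(t))\ge c_T>0$ on $[0,T]$. Near $t=0$ this follows from continuity and $\boldsymbol\fl_0\neq0$. Away from $0$, the arrival integral gives mass at least $\ar_j\int_0^t\pd_j((p_j\bar G(r,t),\infty)\times(t-r,\infty))dr$, and $\bar G(r,t)\le (t-r)/\inf\wmass$. A bootstrap works here: on the set where $\wmass$ is small, $\bar G$ grows fast, but the arrivals in the last $\delta$ time units contribute roughly $\ar_j\delta$ minus what has been served. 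So I would argue by contradiction with the first time $\wmass(\boldsymbol z)$ drops below a small level. Under the upper bound $\wmass\le \max_j\langle1,\fl_{0,j}\rangle+\sum\ar_j t$ this is routine.

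With the bound in hand, $|\bar G(s,t)-\bar G'(s,t)|\le c_T^{-2}\int_s^t|\wmass(\boldsymbol z-\boldsymbol z')|dr$. The Lipschitz continuity of the marginal CDFs of $\fl_{0,j}$ and $\pd_j$ in the $x$-coordinate gives
\[
|\Phi(\boldsymbol z)_j(t)-\Phi(\boldsymbol z')_j(t)|\le K_T\int_0^t\|\boldsymbol z-\boldsymbol z'\|_r\,dr ,
\]
since $|\mu((a,\infty)\times B)-\mu((a',\infty)\times B)|\le \mu((a\wedge a',a\vee a']\times\R_+)$. Gronwall then yields $\boldsymbol z=\boldsymbol z'$, hence $\bar G=\bar G'$, and the measures coincide by \eqref{fluidtransporteqn} applied to indicators $1_{(x,\infty)\times(y,\infty)}$.

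For \textbf{existence}, the same contraction estimate makes $\Phi$ a contraction on $C([0,\delta],\R^J)$, restricted to paths with $\wmass\ge c$ and near $\boldsymbol z(0)$. Picard iteration (or Banach's fixed point theorem with a weighted sup norm, $e^{-\lambda t}$) gives a local solution, which is then continued using the a priori lower bound. Alternatively, existence could come from tightness of fluid-scaled stochastic models, but the direct fixed point is cleaner. The moment hypotheses $\langle x^{1+\epsilon},\fl_0\rangle,\langle\pi_y,\fl_0\rangle<\infty$ guarantee finiteness and continuity but play no real role in the contraction.

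The \textbf{main obstacle} is the a priori lower bound on $\wmass(\boldsymbol z(t))$ uniformly on compact time intervals, equivalently ruling out collapse to zero, which would make $\bar G$ explode. I would attack it as follows. If $\wmass(\boldsymbol z)$ were to fall to $\eta$ at a first time $\tau$, the arrivals in $[\tau-\eta,\tau]$ are only drained at rate $p_j/\wmass$. Because the marginal CDFs are Lipschitz, $\pd_j$ assigns mass $o(1)$ to thin strips near $x=0$. Hence the newly arrived mass is $\gtrsim\ar_j\int\pd_j((p_j\bar G,\infty)\times\cdot)$, and this remains bounded below relative to $\eta$, which forces a contradiction for $\eta$ small. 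Overload $\rho>1$ is not needed for this local statement, only $\boldsymbol\ar>0$ and $\boldsymbol\fl_0\ne0$.
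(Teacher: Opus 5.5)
Your uniqueness half is the paper's argument: Gr\"onwall after bounding $|1/\wmass(\boldsymbol{z})-1/\wmass(\boldsymbol{z}')|$ and using the $x$-Lipschitz marginals. The lower bound you plan to bootstrap is not needed there. Condition (3) of Definition~\ref{notoverdef} together with continuity of $t\mapsto\wmass(\boldsymbol{z}(t))$ already gives $\min_{[0,T]}\wmass(\boldsymbol{z})>0$, which is why the paper's $t^*_\epsilon\to\infty$.

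The genuine gap is in existence. There you need this bound for the Picard solution itself, both to continue it and to verify condition (3), and your claim that overload is unnecessary is false. Whenever $\wmass(\boldsymbol{z})>0$ the total service rate is $\sum_j p_jz_j/\wmass(\boldsymbol{z})=1$, and reneging only removes work. So for the same transport dynamics $\sum_j\langle\pi_x,\fl_j(t)\rangle\le\sum_j\langle\pi_x,\fl_{0,j}\rangle-(1-\rho)t$, and for $\rho<1$ the mass hits zero in finite time. Hence no argument using only $\boldsymbol{\ar}>\boldsymbol{0}$ and $\boldsymbol{\fl}_0\neq\boldsymbol{0}$ can work, and yours does not. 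At the first time $\tau$ with $\wmass(\boldsymbol{z}(\tau))=\eta$, a job that arrived in $[\tau-\eta,\tau]$ may already have received up to $p_j$ units of service, so the strip is not thin. You therefore only obtain $\eta\ge c_\eta\,\eta$ with $c_\eta=\sum_jp_j\ar_j\int_0^1\pd_j\bigl((p_ju,\infty)\times(\eta u,\infty)\bigr)du$, and nothing forces $c_\eta>1$, even when $\rho>1$. Shrinking the window makes the strip thin but shrinks the coefficient proportionally.

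The fix is to use the entire arrival history and rescale. Since $\bar G(r,\tau)\le(\tau-r)/\eta$ for $r\le\tau$, substituting $u=(\tau-r)/\eta$ in the arrival term gives
\[
1\ \ge\ \sum_{j=1}^J p_j\ar_j\int_0^{\tau/\eta}\pd_j\bigl((p_ju,\infty)\times(\eta u,\infty)\bigr)\,du\ \longrightarrow\ \sum_{j=1}^J\ar_j\langle\pi_x,\pd_j\rangle=\rho\qquad(\eta\to0)
\]
by monotone convergence. This requires $\tau$ to stay bounded below uniformly in $\eta$, which holds: the Lipschitz marginals of $\boldsymbol{\fl}_0$ give a $t_0>0$, depending only on the data, before which $\wmass(\boldsymbol{z})\ge\wmass(\boldsymbol{z}(0))/2$. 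The display contradicts $\rho>1$ for small $\eta$. It is the analytic counterpart of the paper's workload bound $\sum_j\langle\pi_x,\fl_j(t)\rangle\ge(\rho-1)(t-s)/2$, which is how the paper verifies condition (3) for limit points.

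With this repair, your contraction scheme gives a genuinely different, self-contained analytic existence proof. It needs only $\langle\pi_x,\pd_j\rangle<\infty$, not the $x^{1+\epsilon}$ moment of $\boldsymbol{\fl}_0$. The paper instead obtains existence by compactness, as a subsequential limit of fluid-scaled stochastic models in Theorem~\ref{thm: fluid limit}. It uses Lemma~\ref{lem: pix convergence} to pass the workload bound to the limit, and that is where the $x^{1+\epsilon}$ moment enters.
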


\begin{thm}
    Let $\{\bar{\boldsymbol{\ssp}}^m(\cdot)\}_{m=1}^{\infty}$ be a sequence of fluid-scaled models that satisfies Assumption \ref{assumptions}. Then $\{\bar{\boldsymbol{\ssp}}^m(\cdot)\}_{m=1}^{\infty}$ converges in distribution to $\boldsymbol{\fl}(\cdot),$ where
$\boldsymbol{\fl}(\cdot)$ is the unique fluid model solution with
initial condition $\bar{\boldsymbol{\ssp}}_0$.
    \label{thm: fluid limit}
\end{thm}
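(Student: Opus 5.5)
We follow the standard compactness--uniqueness scheme of \cite{gromollpuhawilliams} and \cite{gromollzwart}: first show that the fluid-scaled processes $\{\bar{\boldsymbol{\ssp}}^m(\cdot)\}$ are tight in $D([0,\infty),\M^J)$, then show that every subsequential limit is almost surely a fluid model solution with initial condition $\bar{\boldsymbol{\ssp}}_0$, and finally invoke the uniqueness half of the preceding existence and uniqueness theorem. Once every subsequential limit is identified with the deterministic $\boldsymbol{\fl}(\cdot)$, convergence of the whole sequence follows.

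\textbf{Tightness.} Using the Jakubowski criterion (as in \cite{gromollpuhawilliams}), it suffices to establish two things.
\begin{enumerate}
\item Compact containment: for each $T$, the total masses $\sup_{t\le T}\langle 1,\bar{\ssp}^m_j(t)\rangle\le \langle 1,\bar{\ssp}^m_j(0)\rangle+\bar{\ap}^m_j(T)$ are stochastically bounded. We also need uniform control of mass escaping to infinity. In the $x$ direction this comes from the $x^{1+\epsilon}$ moment of the initial condition and the law of large numbers for $\sum_{i\le \ap^m_j(mT)}\ser_i^j 1_{\{\ser_i^j>K\}}$. In the $y$ direction it comes from the fact that patience decreases at unit rate, together with $\langle\pi_y,\bar{\ssp}^m_j(0)\rangle$ and the LLN for patience times.
\item Oscillation control: for each $f\in\mathbf{C}^1_b$ vanishing on $\partial\R_+^2$, oscillation control of $\langle f,\bar{\ssp}^m_j(\cdot)\rangle$. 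This follows from the fluid-scaled version of \eqref{differentialmasstransport}.
\end{enumerate}

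The service-rate term in \eqref{differentialmasstransport} is a concern because $p_j/\wmass(\bar{\boldsymbol{\tm}}^m(s))$ may blow up when the queue is nearly empty. However, its contribution $\int_s^t p_j\wmass(\bar{\boldsymbol Z}^m)^{-1}\langle f_1,\bar{\ssp}_j^m\rangle\,du$ is bounded by $\|f_1\|(t-s)$, since $p_j\langle 1,\bar{\ssp}^m_j\rangle\le \wmass(\bar{\boldsymbol{\tm}}^m)$. So no lower bound on mass is needed for tightness. The arrival sum is controlled by the FLLN for $\bar{\ap}^m_j$ together with Assumption \ref{assumptions}\ref{fllnassumption1}--\ref{fllnassumption2}.

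\textbf{Characterizing limits.} By Skorokhod representation, take an a.s.\ convergent subsequence with limit $\boldsymbol{\fl}$. Continuity of the limit follows because jumps have size $1/m$. For property (1), use Assumption \ref{initialconditionsassumption}. For the fluid equation \eqref{fluidlimiteqn}:
\begin{itemize}
\item The arrival term converges to $\ar_j\langle f,\pd_j\rangle t$ by a Glivenko--Cantelli/FLLN argument for marked renewal sums; equivalently, the martingale $\frac1m\sum_{i\le \ap_j^m(mt)}(f(\ser_i^j,\pat_i^j)-\langle f,\pd_j\rangle)$ vanishes.
\item The $f_2$-term passes to the limit by weak convergence.
\item The service term requires $\wmass(\boldsymbol{\tm})$ to stay bounded away from zero.
\end{itemize}

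\textbf{Main obstacle: positivity of the limit.} The step requiring the most work is establishing property (3) together with a local-in-time lower bound $\inf_{s\le T}\wmass(\langle\boldsymbol 1,\boldsymbol{\fl}(s)\rangle)>0$. Only with such a bound does $\wmass(\bar{\boldsymbol{\tm}}^m)^{-1}\to\wmass(\langle\boldsymbol 1,\boldsymbol\fl\rangle)^{-1}$ uniformly on $[0,T]$, which is needed to pass to the limit in the service term. We also need $\fl_j(t)(\partial\R_+^2)=0$ (property (2)), which allows us to replace $1_{(0,\infty)^2}$-type indicators by smooth approximations.

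I would first handle property (2) using Lipschitz marginal CDFs. For the boundary $\{x=0\}$, the mass near the boundary at time $t$ is bounded by initial and arrival mass in strips of width proportional to $\bar G$-increments. For $\{y=0\}$, mass near the boundary comes from strips of width $\delta$ in $y$, and the Lipschitz property makes the strip mass $O(\delta)$.

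For positivity I would argue by contradiction using overload $\load>1$. Suppose $\wmass(\bar{\boldsymbol{\tm}}^m)$ is small on an interval. Then the service rate per job is huge, but total workload can only be depleted at rate at most one. Meanwhile, work arrives at rate $\load>1$, and reneging removes only mass that has been present for a positive time, so reneging is small while mass is small. Thus small mass forces a workload increase, which is incompatible with small mass because the work carried by a small number of jobs is controlled via the uniform integrability of $\ser$ and $x^{1+\epsilon}$. Concretely, I would compare with a lower bound obtained from \eqref{fluidtransporteqn}-type reasoning: jobs arriving in $[t-\delta,t]$ with service $>\delta'$ and patience $>\delta$ are still present. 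With a nonzero initial condition, this yields a positive lower bound uniform on compacts.
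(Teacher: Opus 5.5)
\textbf{Gap in the tightness step.} Your compactness--uniqueness scheme, the bound $p_j\langle 1,\bar{\ssp}^m_j\rangle\le\wmass(\bar{\boldsymbol{\tm}}^m)$, and your handling of the arrival and transport terms match the paper. The tightness step, however, does not go through as written.

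\begin{itemize}
\item You control oscillations of $\langle f,\bar{\ssp}^m_j(\cdot)\rangle$ only for $f$ vanishing, together with its derivatives, on $\partial\R_+^2$. That class does not separate points of $\M(\R_+^2)$: it cannot distinguish measures that differ only on the boundary. So Jakubowski's criterion does not apply.
\item More substantively, that class sees nothing of the mass leaving through the boundary, i.e.\ service completions and abandonments. In particular you do not get C-tightness of the total masses $\bar{\tm}^m_j(\cdot)$, which your service term needs.
\item The paper instead uses all $f\in\mathbf{C}^1_b(\R_+^2)$. The resulting boundary term $\|f\|\,\bar{\ssp}^m_j(t)(B_\delta)$ in \eqref{controlledoscdifference2} is controlled by the prelimit, uniform-in-time estimate of Lemma~\ref{bdeltalemma}.
\item The same lemma gives property (2) of limits via Portmanteau. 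So it has to be proved before tightness, not only as a property of limits.
\item Your sketch of this estimate (``the Lipschitz property makes the strip mass $O(\delta)$'') misses its real difficulty. In the prelimit the strips are empirical and sit at random locations $p_j\bar G^m(\iinta_i^{j,m}/m,t)$, which depend on the service times through $\bar G^m$. You also need the bound uniformly in $t\le T$. Lipschitz marginals bound the $\pd_j$-mass of a fixed strip, not the empirical mass of history-dependent strips.
\item The paper handles this by partitioning $[0,T]$ into windows $I_k^\delta$ of length at most $\delta$, so that the strips of all arrivals in one window lie in a common short interval. It then applies a DKW bound, uniform in location, to the empirical service (resp.\ patience) CDF of the arrivals in each window, conditionally on their number. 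This conditioning is legitimate because arrivals are independent of the service and patience data.
\item Once departures enter, your remark that no lower bound on mass is needed no longer applies. A job present at $t$ leaves $\{x>0\}$ by $t+\delta$ when its residual service is at most $p_j\bar G^m(t,t+\delta)$. In fluid scale the per-job rate is $p_j/\wmass(\bar{\boldsymbol{\tm}}^m)$, so this threshold is $O(\delta)$ only when $\wmass(\bar{\boldsymbol{\tm}}^m)$ is bounded below. That step therefore needs a lower bound or a localization argument.
\end{itemize}

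\textbf{Flaw in the positivity step.} Your ``concrete'' mechanism is circular. The per-job rate $p_j/\wmass(\bar{\boldsymbol{\tm}}^m)$ is huge exactly when the mass is small. So under your contradiction hypothesis, jobs with service $>\delta'$ that arrived in $[t-\delta,t]$ need not still be present. Counting jobs also does not use $\load>1$, which is a statement about work.

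Your workload heuristic is the right one, and it is what the paper does:
\begin{itemize}
\item By \eqref{lowerboundoffluidmodel}, the total workload at $t$ is at least $-(t-s)$ plus the work of arrivals in $(s,t]$ that have not reneged by $t$.
\item Choose $\epsilon$ with $\sum_j\ar_j\langle\pi_x1_{(0,\infty)\times[0,\epsilon]},\pd_j\rangle\le(\load-1)/2$. In the limit this gives $\sum_j\langle\pi_x,\bar{\ssp}_j(t)\rangle\ge(\load-1)(t-s)/2>0$ for $t-s\le\epsilon$. Hence the limit is nonzero at every $t>0$.
\item Passing to the limit here requires convergence of the workloads, not just weak convergence. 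The map $\langle\pi_x,\cdot\rangle$ is only lower semicontinuous, which gives the inequality in the wrong direction.
\item This convergence is Lemma~\ref{lem: pix convergence}, obtained from the $x^{1+\epsilon}$ bounds by uniform integrability.
\item Continuity of the limit and $\boldsymbol{\fl}_0\neq\mathbf{0}$ then give a positive lower bound on $[0,T]$. This is what lets the service term pass to the limit.
\end{itemize}
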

\subsection{Diffusion Model Results}
Using the fluid model results, we may apply a diffusion scaling as follows.
For a sequence of models as described in \S \ref{sec: model setup} indexed by $m \in \N$, we define the diffusion-scaled state descriptor
\begin{equation}
\hat{\boldsymbol{\ssp}}^m(\cdot) = \sqrt{m}(\bar{\boldsymbol{\ssp}}^m(\cdot)-\boldsymbol{\fl}(\cdot)), 
\label{diffusiondefinitioneqn}
\end{equation}
where, $\boldsymbol{\fl}$ is the unique fluid model solution with initial condition $\bar{\boldsymbol{\ssp}}_0(\omega).$

Following Theorem 5.3 2) of \cite{mitoma} and the extension of that
theorem to the interval $[0,\infty)$ in Remark (R.2.2) of the same
work, convergence of the sequence
\[
\{\hat{\boldsymbol{\ssp}}^m(\cdot)\}_{m=1}^{\infty}
\]
in
\[
D([0,\infty),\mathscr S'(\R_+^2)^J)
\]
may be obtained by combining the following two steps:
\begin{enumerate}
    \item showing tightness of the $\R^J$-valued projections
    \[
    \left\{
    \left\langle \boldsymbol f,
    \hat{\boldsymbol{\ssp}}^m(\cdot)\right\rangle
    \right\}_{m=1}^{\infty}
    \]
    for each
    $\boldsymbol f\in\mathscr S(\R_+^2)^J$; and

    \item identifying, for every finite collection
    $f_1,\ldots,f_n\in\mathscr S(\R_+^2)$ and
    $t_1,\ldots,t_n\in[0,\infty)$, the joint limiting law of the
    corresponding finite collection of classwise projections
    \[
    \left(
    \left\langle f_a,\hat{\ssp}_j^m(t_a)\right\rangle:
    a\in[n],\ j\in\J
    \right).
    \]
\end{enumerate}
Theorem \ref{tightnessresult} establishes the first condition.
Theorem \ref{lhatconvergencethm} establishes uniqueness of the
limiting finite-dimensional distributions in the second condition.
Consequently, every subsequential limit of
$\{\hat{\boldsymbol{\ssp}}^m(\cdot)\}_{m=1}^{\infty}$ has the same
law. Together with tightness, this implies
\[
\hat{\boldsymbol{\ssp}}^m(\cdot)
\Rightarrow
\hat{\boldsymbol{\ssp}}(\cdot)
\qquad\text{in}\qquad
D([0,\infty),\mathscr S'(\R_+^2)^J),
\]
where the finite-dimensional distributions of
$\hat{\boldsymbol{\ssp}}(\cdot)$ are characterized by
Theorem \ref{lhatconvergencethm}.

Similar to the bar denoting fluid-scaling, we use a hat to denote diffusion-scaling for relevant processes.
Before defining our diffusion model solutions, we define our driving diffusive noise field.
\begin{thm}
    \label{Ydefthm}
    Let $\boldsymbol{\fl}$ be a fluid model solution with a nonzero initial condition $\boldsymbol{\fl}_0$ and $T >0.$
    For each finite collection of functions \(f_1,\ldots,f_n\in\mathscr{S}(\R_+^2)\), there exists a unique in distribution, continuous multiparameter $\R^{nJ}$-valued process
    \[
\left\{
\hat{\mart}_{f_a}^{\ap_j,j}(r,t),
a\in[n],\ j\in[J],\ 0\le r\le t\le T
\right\}
\]
    \normalsize
    with finite dimensional distributions that are equal to those of the mean-zero Gaussian process with covariances 
\begin{align*}
    &\operatorname{Cov}
\left(\hat{\mart}_{f_a}^{\ap_j,j}(r_1,t_1),\hat{\mart}_{f_b}^{\ap_j,j}(r_2,t_2)\right)\\&= \int_0^{r_1\wedge r_2} \ar_j \left\langle
\left(t_{p_j\bar{G}(u,t_1),\,t_1-u}f_a\right)
\left(t_{p_j\bar{G}(u,t_2),\,t_2-u}f_b\right),
\pd_j
\right\rangle du\\
    &-\int_0^{r_1\wedge r_2} \ar_j\langle t_{p_j\bar{G}(u,t_1),t_1-u}f_a, \pd_j\rangle \langle t_{p_j\bar{G}(u,t_2),t_2-u}f_b, \pd_j \rangle du,
\end{align*}
and
$$
\operatorname{Cov}
\left(
\hat{\mart}_{f_a}^{\ap_j,j}(r_1,t_1),
\hat{\mart}_{f_b}^{\ap_k,k}(r_2,t_2)
\right)
=0,
\qquad j\neq k.
$$
Furthermore, if $\pd_j$ has a density $g_j \in \mathbf{C}_b^2(\R_+^2)$ for each $j \in \J,$ the same construction applies to finite collections
$f_1,\ldots,f_n\in \mathscr{S}(\R_+^2)\cup\{1\}$.
\end{thm}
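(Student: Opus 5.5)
Since a Gaussian law is determined by its mean and covariance, uniqueness in distribution of the finite-dimensional distributions is immediate once existence is shown. The work is therefore to prove two things: the prescribed covariance is a legitimate (positive semidefinite) covariance function on the index set $\{(a,j,r,t): a\in[n], j\in\J, 0\le r\le t\le T\}$, and the resulting Gaussian process has a continuous modification.

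\textbf{Positive semidefiniteness.} We exhibit the covariance as that of an explicit stochastic integral. For each $j$, let $W_j$ be a Gaussian white noise on $[0,T]\times\R_+^2$ with control measure $\ar_j\,du\otimes\pd_j$, with $W_1,\dots,W_J$ independent. Define the centered noise
\[
\tilde W_j(du,dz):=W_j(du,dz)-\pd_j(dz)\,W_j(du,\R_+^2).
\]
Put $h^{a,t}_j(u,z):=t_{p_j\bar G(u,t),t-u}f_a(z)$. A direct computation then shows that
\[
\int_0^r\int h^{a,t}_j(u,z)\,\tilde W_j(du,dz)
\]
has exactly the stated covariance: the second-moment term gives $\ar_j\langle h^{a,t_1}_jh^{b,t_2}_j,\pd_j\rangle$, and the centering gives the subtracted product of means. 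Independence of the $W_j$ gives the zero cross-class covariance. Because this is the covariance of an actual $L^2$ family, it is automatically positive semidefinite. Kolmogorov's extension theorem then gives a Gaussian process with these finite-dimensional laws.

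Well-definedness needs only boundedness and measurability of the integrand in $(u,z)$. This holds because $f_a$ is bounded and $\bar G(u,t)$ is continuous in $(u,t)$. The latter uses that $\sum_ip_i\langle1,\fl_i(s)\rangle$ is continuous and stays positive on $[0,T]$, which comes from condition~(3) of Definition~\ref{notoverdef} together with the nonzero initial condition and continuity of $\boldsymbol{\fl}$. The same argument covers $f=1$, since $t_{a,b}1=1_{(a,\infty)\times(b,\infty)}$ is bounded.

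\textbf{Continuity (main obstacle).} We apply the multiparameter Kolmogorov--Chentsov criterion on the two-dimensional parameter set $\{0\le r\le t\le T\}$. Since the process is Gaussian, it suffices to show
\[
\mathbb E\bigl|\hat Y(r_1,t_1)-\hat Y(r_2,t_2)\bigr|^2\le C\,\bigl(|r_1-r_2|+|t_1-t_2|\bigr)^{\beta}
\]
for some $\beta>0$. Hypercontractivity then upgrades this to a bound of order $(\cdot)^{p\beta/2}$ on the $p$-th moment, and taking $p$ large beats the dimension $2$.

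The increment splits into two parts:
\begin{itemize}
\item The change in $r$ contributes at most $C|r_1-r_2|$, since $h$ is bounded.
\item The change in $t$ contributes at most
\[
\int_0^{T}\ar_j\big\langle (h^{a,t_1}_j-h^{a,t_2}_j)^2,\pd_j\big\rangle\,du .
\]
\end{itemize}
For $f_a\in\mathscr S$, $f_a$ is Lipschitz, and $\bar G(u,\cdot)$ is Lipschitz with constant $1/\inf_{s\le T}\sum_ip_i\langle1,\fl_i(s)\rangle$. The only non-smooth feature is the cutoff in the translation. Because the translated function is cut off at the moving boundary, the difference is bounded by $C(|t_1-t_2|)$ plus a boundary-strip term bounded by
\[
\pd_j\bigl(\{x\in[p_j\bar G(u,t_1),p_j\bar G(u,t_2)]\}\cup\{y\in[t_1-u,t_2-u]\}\bigr).
\]
By the Lipschitz continuity of the marginal CDFs of $\pd_j$ assumed in Assumption~\ref{assumptions}, this strip term is $\le C|t_1-t_2|$. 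Altogether this gives $\beta=1$.

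For $f=1$ the integrand is a pure indicator. Here the boundary-strip estimate is the whole bound, and we use the density $g_j\in\mathbf C_b^2$, which in particular yields bounded marginal densities, to get the same Lipschitz estimate.

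Once $\beta=1$ is in hand, a continuous modification exists on the compact parameter set. Hence the process is continuous and unique in distribution, as claimed.
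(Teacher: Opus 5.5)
Your proposal is correct, and it takes a genuinely different route from the paper. The paper gives no argument of its own for this theorem; it defers to Theorem 3.1 of the companion paper. The related machinery it does display, in Lemma~\ref{martingalelimlem}, identifies the field through the prelimit martingale fields. There, the Gaussian finite-dimensional laws come from the martingale CLT. Continuity comes from multiparameter C-tightness, via rectangle-increment bounds $E[\xi^2]\le C(t_2-t_1)(s_2-s_1)$; for $f\equiv1$ these bounds use the bounded joint density $g^j$.

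You instead construct the field directly as a stochastic integral against the centered white noise $\tilde W_j$. This makes positive semidefiniteness of the prescribed covariance transparent. You then get continuity from the Gaussian moment identity together with Kolmogorov--Chentsov on the two-parameter set, with $\beta=1$. Your route is elementary and independent of the queueing prelimit. The paper's route is heavier, but it simultaneously delivers the convergence $\Phi^{j,m}_{f,t}\Rightarrow\hat{\mart}^{\ap_j,j}_{f}$ that is needed later in Lemma~\ref{uconvlemma}.

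One small correction: a density in $\mathbf{C}_b^2(\R_+^2)$ does not by itself force bounded marginal densities. You do not need that claim. For $f\equiv1$, the Lipschitz marginal CDFs you already invoked from Assumption~\ref{assumptions} give the strip bound exactly as in the $\mathscr S$ case. In fact the $du$-integration alone suffices. The $y$-strip is handled by $\int_0^r 1_{\{t_1-u<y\le t_2-u\}}\,du\le t_2-t_1$. The analogous $x$-strip integral is $O(|t_2-t_1|)$, because $\bar G$ has derivative bounded below on $[0,T]$. So your argument also shows that the density hypothesis is not needed for existence and continuity of the limiting field itself.
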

\begin{defi}
    \label{Udefdef}
    Let $\boldsymbol{\fl}$ be a fluid model solution with a nonzero initial condition $\boldsymbol{\fl}_0.$
    Assume also that $\pd_j$ has a density $g^j \in \mathbf{C}_b^2(\R_+^2)$ for each $j \in \J,$
    Then, take any collection of test functions
\(f_1,\ldots,f_n\in\mathscr{S}(\R_+^2)\cup\{1\}\) and a jointly specified realization of the
limiting driving noise processes
\[ \{\hat{\ap}_j\}_{j \in \J},\quad
\{F_{f_a}^{j}\}_{a\in[n],j\in\J},
\quad
\{\hat{\mart}_{f_a}^{\ap_j,j}\}_{a\in[n],j\in \J,},
\]
where the martingale field has the joint law specified in
Theorem~\ref{Ydefthm} and the other processes are as defined in Assumption \ref{assumptions} \eqref{initialconditionsassumption}-\eqref{cltforrenewalassumption}.
Furthermore, assume that the three processes above form a mutually independent collection of processes.
   Then we may jointly define the driving noise field for the test functions $f_1,...,f_n$ and $j \in \J$ as follows:
    \begin{align*}
    U_{f_a}^{j}(t) &= F_{f_a}^{j}(p_j\bar{G}(t),t) \\&+\hat{\mart}^{\ap_j,j}_{f_a}(t,t) 
     \\&+\int_0^t\langle t_{p_j\bar{G}(s,t),t-s}f_a, \pd_j \rangle d\hat{\ap}_j(s),
     \numberthis \label{ujmlimit}
    \end{align*}
\end{defi}
\begin{rem}
    We remark that, for each $a \in [n], j \in [J],$ if we further assume that $\fl_j(0)$ has a density $h^j \in \mathbf{C}_b^2(\R_+^2)$ and that both $\pd_j,\fl_j(0)$ have CDFs in $C_b^1(\R_+^2),$ then the process $U_{f_a}^j(\cdot)$ is continuous.
    This is an immediate consequence of the fact that it is equal in distribution to the limit of a C-tight sequence of processes, which will be proved in Lemma \ref{uconvlemma}.
\end{rem}
 \begin{thm}[Tightness of the PPS diffusion-scaled state descriptors]
\label{tightnessresult}
Under Assumption~\ref{assumptions}, suppose that $\pd_j,\fl_{0,j}$ have densities $g^j,h^j$ and y-tailed CDFs $F^{\pd_j},F^{\fl_{0,j}}$ in $\mathbf{C}_b^2(\R_+^2)$ for each $j \in \J$.
Then the sequence
\[
    \{\hat{\boldsymbol{\mathcal Z}}^m(\cdot)\}_{m=1}^{\infty}
\]
of diffusion-scaled PPS state descriptors is tight in
$D([0,\infty),\mathscr S'(\R_+^2)^J)$. Moreover, for every
$\boldsymbol f\in \mathscr{S}(\R_+^2)^J$, the sequence
\[
\left\{
\left(
    \langle f_1,\hat{{\ssp}}_1^m(\cdot)\rangle,
    \ldots,
    \langle f_J,\hat{{\ssp}}_J^m(\cdot)\rangle,
    \wmass(\hat{\boldsymbol{\tm}}^m(\cdot))
\right)
\right\}_{m=1}^{\infty}
\]
is $C$-tight in $D(\mathbb R_+,\mathbb R^{J+1})$.
\end{thm}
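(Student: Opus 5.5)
By Mitoma's criterion (Theorem 5.3 of \cite{mitoma} with Remark (R.2.2)), tightness of $\{\hat{\boldsymbol{\ssp}}^m(\cdot)\}$ in $D([0,\infty),\mathscr S'(\R_+^2)^J)$ reduces to tightness of the real projections $\langle f,\hat{\ssp}^m_j(\cdot)\rangle$ for each $f\in\mathscr S(\R_+^2)$ and $j\in\J$. So it suffices to prove the second, stronger assertion. That is, I will show $C$-tightness of the vector
\[
\bigl(\langle f_1,\hat{\ssp}_1^m\rangle,\ldots,\langle f_J,\hat{\ssp}_J^m\rangle,\wmass(\hat{\boldsymbol\tm}^m)\bigr),
\]
which I will do componentwise, with $f=1$ allowed in order to handle $\wmass(\hat{\boldsymbol\tm}^m)$.

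\textbf{Step 1: decomposition.} Fluid-scale \eqref{masstransportequationdef} with $s=0$, and subtract the fluid transport equation \eqref{fluidtransporteqn}. Multiplying by $\sqrt m$ and adding and subtracting compensators, I obtain for $f\in\mathscr S(\R_+^2)\cup\{1\}$
\begin{align*}
\langle f,\hat{\ssp}^m_j(t)\rangle &= \bigl\langle t_{p_j\bar G^m(t),t}f,\hat{\ssp}^m_j(0)\bigr\rangle + \hat{\mart}^{m}_{f}(t,t) + \int_0^t\bigl\langle t_{p_j\bar G^m(s,t),t-s}f,\pd_j\bigr\rangle\,d\hat{\ap}^m_j(s)\\
&\quad + \sqrt m\Bigl[\langle t_{p_j\bar G^m(t),t}f,\fl_j(0)\rangle-\langle t_{p_j\bar G(t),t}f,\fl_j(0)\rangle\Bigr]\\
&\quad + \sqrt m\int_0^t\ar_j\Bigl[\langle t_{p_j\bar G^m(s,t),t-s}f,\pd_j\rangle-\langle t_{p_j\bar G(s,t),t-s}f,\pd_j\rangle\Bigr]ds .
\end{align*}
Here $\hat{\mart}^m_f(r,t)$ is the centered sum over arrivals up to time $r$ of $t_{p_j\bar G^m(U_i,t),t-U_i}f(\ser_i,\pat_i)$ minus its conditional mean.

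The first three terms form a prelimit version $U^{j,m}_f$ of the noise field \eqref{ujmlimit}. Their $C$-tightness is the content of Lemma \ref{uconvlemma}, which uses Assumption \ref{assumptions}\eqref{initialconditionsassumption}--\eqref{cltforrenewalassumption}, the $\mathbf C_b^2$ regularity of $g^j,h^j,F^{\pd_j},F^{\fl_{0,j}}$, and the multiparameter martingale machinery of \cite{loeser2025diffusionlimitsmeasurevaluedqueueing}. The random translation $\bar G^m$ inside these terms is handled by fluid convergence (Theorem \ref{thm: fluid limit}) together with the uniform modulus of continuity in the translation parameter supplied by the $F_f^{j,m}$ assumption.

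\textbf{Step 2: linearize the remaining terms in $\hat G^m:=\sqrt m(\bar G^m-\bar G)$.} Using the $\mathbf C^2_b$ CDFs and densities, a Taylor expansion turns the last two lines into
\[
-\,p_j\langle \partial_x t_{p_j\bar G(t),t}f,\fl_j(0)\rangle\,\hat G^m(t)-\int_0^t \ar_jp_j\langle\partial_x t_{p_j\bar G(s,t),t-s}f,\pd_j\rangle\,\hat G^m(s,t)\,ds+R^m_f(t),
\]
with remainder $|R^m_f(t)|\le C\sqrt m\,\|\bar G^m-\bar G\|_t^2$. Since $1/\wmass$ is Lipschitz away from $0$, and $\wmass(\fl(s))$ is bounded below on $[0,T]$ by condition \ref{overloadedconditiondef} and continuity, we get
\[
|\hat G^m(s,t)|\le C\int_s^t|\wmass(\hat{\boldsymbol\tm}^m(u))|\,du
\]
on the event where $\wmass(\bar{\boldsymbol\tm}^m)$ stays bounded below. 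By Theorem \ref{thm: fluid limit}, that event has probability tending to one.

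\textbf{Step 3: closing the loop (the main obstacle).} Take $f=1$, sum with weights $p_j$, and combine Steps 1 and 2. This gives a linear Volterra-type inequality
\[
|\wmass(\hat{\boldsymbol\tm}^m(t))|\le \sum_j p_j|U^{j,m}_1(t)| + C\int_0^t|\wmass(\hat{\boldsymbol\tm}^m(s))|\,ds + \sum_j p_j|R^m_1(t)|.
\]
The difficulty is the quadratic remainder: it involves $\hat G^m$ itself, and $\sqrt m\|\bar G^m-\bar G\|^2=\|\hat G^m\|\cdot\|\bar G^m-\bar G\|$ is only small once $\hat G^m$ is known to be stochastically bounded. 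I plan to break this circularity with a stopping argument. Let $\tau^m_K$ be the first time $|\wmass(\hat{\boldsymbol\tm}^m)|$ exceeds $K$. Before $\tau^m_K$ the remainder is at most $CK^2T^2/\sqrt m$, and Gronwall yields
\[
\sup_{t\le \tau^m_K\wedge T}|\wmass(\hat{\boldsymbol\tm}^m)|\le C_T\Bigl(\sum_j\|U^{j,m}_1\|_T+o(1)\Bigr).
\]
Stochastic boundedness of the $U$'s then forces $P(\tau^m_K\le T)$ to be small for large $K$. This gives stochastic boundedness of $\wmass(\hat{\boldsymbol\tm}^m)$, and hence of $\hat G^m$. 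It also gives $C$-tightness: the map from $U$ to the solution of the linear Volterra equation is Lipschitz continuous in the sup norm, so moduli of continuity transfer.

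Finally, for general $f\in\mathscr S$, every term of $\langle f,\hat{\ssp}^m_j\rangle$ is now expressed through $C$-tight inputs via maps that are continuous uniformly on compacts, with bounded kernels. This yields joint $C$-tightness of the $(J+1)$-vector, and Mitoma's criterion completes the proof.
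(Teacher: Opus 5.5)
Your argument is correct and follows the paper's overall plan: Mitoma's reduction to projections, a decomposition of $\langle f,\hat{\ssp}_j^m\rangle$ into a noise term plus corrections driven by $\hat G^m$, the bound $|\hat G^m(s,t)|\le C_T\int_s^t|\wmass(\hat{\boldsymbol{\tm}}^m(u))|\,du$ on a high-probability event, Gr\"onwall, then moduli of continuity. Where you differ is the central estimate.

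\textbf{How the paper avoids your main obstacle.} The paper never linearizes to get compact containment. In the proof of Lemma~\ref{componenttightnesslemtm} it uses the first-order mean value bounds \eqref{Imvtbound}--\eqref{Rmvtbound}. These close a linear Gr\"onwall inequality with no remainder, so the circularity you call the main obstacle does not arise and no localization is needed. The quadratic terms $|\hat G^m|^2/\sqrt m$ appear only later, in the oscillation estimates built on Lemma~\ref{lem: weird calculation}, once $\|\hat G^m\|_T$ is known to be stochastically bounded.

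\textbf{What your route buys.} Your Taylor expansion with the stopping time $\tau^m_K$ is a valid alternative; the remainder bound holds up to and including $\tau^m_K$ because $\hat G^m$ is continuous. Once $\|R^m_1\|_T\to0$ in probability, the modulus estimate $w(\wmass(\hat{\boldsymbol{\tm}}^m),\delta)\le\sum_jp_j\,w(U_1^{j,m},\delta)+C\|\wmass(\hat{\boldsymbol{\tm}}^m)\|_T\,\delta+2\sum_jp_j\|R^m_1\|_T$ is immediate. It replaces the second-order increment bounds of Lemma~\ref{lem: weird calculation}. Your linearized identity also already has the form of \eqref{LGsystem2} and \eqref{rjlimitdef}, so the same expansion later identifies the limit. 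It is this direct estimate, not a sup-norm Lipschitz property of a solution map, that actually transfers the moduli; you should state it that way.

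\textbf{A step that needs more justification.} Because you integrate the correction against $\ar_j\,ds$, your third term is $\int_0^t\langle t_{p_j\bar G^m(s,t),t-s}f,\pd_j\rangle\,d\hat{\ap}_j^m(s)$, with a random integrand. The paper's $U_f^{j,m}$ in \eqref{Udef} uses the deterministic integrand $\langle t_{p_j\bar G(s,t),t-s}f,\pd_j\rangle$ and integrates the correction against $d\bar{\ap}_j^m$. So Lemma~\ref{uconvlemma} does not apply verbatim to your term. Fluid convergence of the integrand alone is also not enough, since $\hat{\ap}_j^m$ has total variation of order $\sqrt m$.

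\textbf{How to fix it.} Integrate by parts. Let $\phi^m(s,t)$ be the difference of the two integrands; it vanishes at $s=t$. Then the discrepancy is at most
\[
\|\hat{\ap}_j^m\|_T\bigl(\sup_{s\le t\le T}|\phi^m(s,t)|+\sup_{t\le T}\mathrm{TV}_{[0,t]}\phi^m(\cdot,t)\bigr).
\]
Both quantities in parentheses tend to zero in probability. This is because $\bar G^m\to\bar G$ and $1/\wmass(\bar{\boldsymbol{\tm}}^m)\to1/\wmass(\boldsymbol{\flm})$ uniformly on $[0,T]$, while $g^j,F^{\pd_j}\in\mathbf{C}_b^2$. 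Alternatively, keep the paper's split and accept the random Gr\"onwall coefficient $\bar{\ap}_j^m(u)$ that appears in \eqref{Geq}.
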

Next, we introduce a system of stochastic differential equations that will be related to the diffusion limit of $\{\hat{\boldsymbol{\mathcal Z}}^m(\cdot)\}_{m=1}^{\infty}.$
\begin{defi}[Diffusion Model Solution for Total Mass]
\label{def: diffusion model solution for total mass}
Let $\boldsymbol{U}_{1}(r,t)$ be a realization of the multiparameter process defined in Definition \ref{Udefdef}. For $j \in \J,$ suppose that $\pd_j,\fl_{0,j}$ have densities $g^j,h^j$ and y-tailed CDFs $F^{\pd_j},F^{\fl_{0,j}}$ in $\mathbf{C}_b^2(\R_+^2)$.
Define the system of equations
    \begin{align*}
        \hat{G}(\cdot) = -\int_0^{\cdot}\frac{\wmass(\hat{\boldsymbol{\tm}}(u))}{\wmass(\boldsymbol{\flm}(u))^2}du,
        \numberthis
        \label{LGsystem1}
    \end{align*}
    \begin{align*}
\wmass(\hat{\boldsymbol{\tm}}(t))& = \sum_{j=1}^J p_j\bigg(U_{1}^{j}(t)  -p_j\hat{G}(t)\int_t^{\infty} h^j(p_j\bar{G}(t),y)dy
     \\&-\int_0^t\ar_jp_j\hat{G}(s,t)\int_{t-s}^{\infty} g^j(p_j \bar{G}(s,t),y)dy ds\bigg),
    \numberthis 
    \label{LGsystem2}
    \end{align*}
    where $\hat{G}(s,t):=\hat{G}(t)-\hat{G}(s).$
    Then any solution to the system \eqref{LGsystem1}-\eqref{LGsystem2} is a diffusion model solution for total mass.
\end{defi}
\begin{thm}[Uniqueness of Diffusion Model Solutions for Total Mass]
\label{Llimit}
For $j\in\J$, suppose that $\pd_j$ and $\fl_{0,j}$ have densities
$g^j,h^j$ and y-tailed CDFs $F^{\pd_j},F^{\fl_{0,j}}$ in
$\mathbf{C}_b^2(\R_+^2)$. For any realization of the driving process $\boldsymbol{U}_1$
defined in Definition~\ref{Udefdef} corresponding to the fluid model
solution $\bar{\boldsymbol{\ssp}}$, the system
\eqref{LGsystem1}--\eqref{LGsystem2} has at most one continuous solution
\[
\bigl(\wmass(\hat{\boldsymbol{\tm}}(\cdot)),\hat G(\cdot)\bigr).
\]
Moreover, every subsequential limit of
\[
\left\{
\left(
\wmass(\hat{\boldsymbol{\tm}}^m(\cdot)),
\hat G^m(\cdot)
\right)
\right\}_{m=1}^{\infty}
\]
can be jointly realized with a process having the law of
$\boldsymbol{U}_1$ specified in Definition~\ref{Udefdef}
corresponding to the fluid model solution
$\bar{\boldsymbol{\ssp}}$, so that it is a continuous solution of
\eqref{LGsystem1}--\eqref{LGsystem2}.
Consequently, all subsequential limits have the same distribution.
\end{thm}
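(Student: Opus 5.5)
The argument has two parts: a Gronwall-type uniqueness argument for the linear Volterra system \eqref{LGsystem1}--\eqref{LGsystem2}, and an identification of subsequential limits via the prelimit mass-transport identity \eqref{masstransportequationdef}.

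\textbf{Uniqueness.} Suppose $(\wmass(\hat{\boldsymbol{\tm}}^{(i)}),\hat G^{(i)})$, $i=1,2$, are two continuous solutions driven by the same realization of $\boldsymbol U_1$. Put $\Delta(t):=\wmass(\hat{\boldsymbol{\tm}}^{(1)}(t))-\wmass(\hat{\boldsymbol{\tm}}^{(2)}(t))$ and $\delta(t):=\hat G^{(1)}(t)-\hat G^{(2)}(t)$. Subtracting, the noise $U_1^j$ cancels. Hence $\delta(t)=-\int_0^t \Delta(u)\wmass(\boldsymbol\flm(u))^{-2}du$, and $\Delta$ is a linear functional of $\delta$:
\[
\Delta(t)=-\sum_j p_j^2\Big(\delta(t)\int_t^\infty h^j(p_j\bar G(t),y)dy+\int_0^t\ar_j(\delta(t)-\delta(s))\int_{t-s}^\infty g^j(p_j\bar G(s,t),y)dyds\Big).
\]
Because the fluid solution is nonzero for all $t$, $\wmass(\boldsymbol\flm(\cdot))$ is bounded below on $[0,T]$ by a positive constant. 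By item \eqref{overloadedconditiondef} of Definition~\ref{notoverdef} and continuity, the same holds for $\wmass(\langle\boldsymbol 1,\boldsymbol\fl\rangle)$; I would also check that $\boldsymbol\flm$ denotes this total-mass vector. Since $h^j,g^j$ are bounded, $\int_t^\infty h^j\,dy$ and $\int g^j\,dy$ are bounded by marginal densities, which are bounded using the $\mathbf C_b^2$ y-tailed CDFs. This gives $|\Delta(t)|\le C\sup_{s\le t}|\delta(s)|$, so $\sup_{s\le t}|\delta(s)|\le C'\int_0^t\sup_{r\le u}|\delta(r)|du$. Gronwall then yields $\delta\equiv0$ and therefore $\Delta\equiv0$ on each $[0,T]$.

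\textbf{Prelimit equations.} Apply \eqref{masstransportequationdef} with $f=1_{(0,\infty)^2}$ and $s=0$, in fluid scaling. Subtract the fluid transport equation \eqref{fluidtransporteqn} for the same $f$, multiply by $\sqrt m$, and weight by $p_j$ and sum over $j$. The result decomposes $\wmass(\hat{\boldsymbol\tm}^m(t))$ into four pieces:
\begin{enumerate}
\item the initial fluctuation $F^{j,m}_1(p_j\bar G^m(t),t)$;
\item a martingale term $\hat\mart^{\ap_j,j,m}_1(t,t)$, obtained by centering $\sum_i\langle t_{\cdot}1,\delta^+_{(\ser_i,\pat_i)}\rangle$ at $\langle t_\cdot 1,\pd_j\rangle$;
\item a renewal term $\int_0^t\langle t_{p_j\bar G(s,t),t-s}1,\pd_j\rangle d\hat\ap^m_j(s)$;
\item first-order corrections from replacing $\bar G$ by $\bar G^m$ inside the translations.
\end{enumerate}
For the corrections I would Taylor-expand $x\mapsto\bar\ssp_{0,j}([0,\infty)\times\cdot)$-type tails in the first coordinate. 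Writing $\bar G^m=\bar G+\hat G^m/\sqrt m$, $\sqrt m(\fl_{0,j}((p_j\bar G^m(t),\infty)\times(t,\infty))-\fl_{0,j}((p_j\bar G(t),\infty)\times(t,\infty)))$ equals $-p_j\hat G^m(t)\int_t^\infty h^j(p_j\bar G(t),y)dy+o(1)$. The remainder is controlled by the $\mathbf C^2_b$ assumption and by $C$-tightness of $\hat G^m$, which follows from Theorem~\ref{tightnessresult}. The arrival term gives the $\int_0^t\ar_j p_j\hat G^m(s,t)\cdots ds$ piece in the same way. Independently, a first-order expansion of $1/x$ gives $\hat G^m(t)=-\int_0^t \wmass(\hat{\boldsymbol\tm}^m(u))/(\wmass(\bar{\boldsymbol\tm}^m(u))\wmass(\boldsymbol\flm(u)))du$, which converges to \eqref{LGsystem1}.

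\textbf{Passage to the limit.} Take a subsequence along which $(\wmass(\hat{\boldsymbol\tm}^m),\hat G^m)$ and the driving terms converge jointly; this uses tightness from Theorem~\ref{tightnessresult} and the noise convergence, with Lemma~\ref{uconvlemma} giving $U^{j,m}_1\Rightarrow U^j_1$ with the law of Definition~\ref{Udefdef}. Applying Skorokhod representation and continuity of the limiting functionals, the limit satisfies \eqref{LGsystem1}--\eqref{LGsystem2}. Uniqueness then makes the limit a measurable function of $\boldsymbol U_1$, so all subsequential limits share one law.

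\textbf{Main obstacle.} The hardest step is the joint convergence in the prelimit decomposition. The martingale and renewal terms involve random translations $\bar G^m$ rather than $\bar G$, so one must show that swapping $\bar G^m$ for $\bar G$ inside the noise terms costs $o(1)$ uniformly on $[0,T]$. I would do this using the equicontinuity of the multiparameter martingale field in its translation parameters, which comes from the Gaussian-limit tools of \cite{loeser2025diffusionlimitsmeasurevaluedqueueing}, together with $\|\bar G^m-\bar G\|_T\to0$ from Theorem~\ref{thm: fluid limit}.
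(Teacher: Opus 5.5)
Your proposal is correct and follows essentially the same route as the paper. You pass to a jointly convergent Skorokhod subsequence, get \eqref{LGsystem1} from the exact prelimit identity for $\hat G^m$, and obtain \eqref{LGsystem2} from \eqref{mainMeq} with $f\equiv 1$, using a mean-value expansion of the $\fl_{0,j}$ and $\pd_j$ terms plus Lemma~\ref{uconvlemma} for $U_1^{j,m}$. You prove uniqueness by Gr\"onwall on $\hat G-\tilde G$, using the lower bound on $\wmass(\boldsymbol{\flm})$ and boundedness of $F_1^{\fl_{0,j}},F_1^{\pd_j}$, as the paper does.

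The ``main obstacle'' you flag, the random translations by $\bar G^m$ in the noise terms, is exactly what Lemmas~\ref{martingalelimlem} and~\ref{uconvlemma} already handle, so you can cite them rather than redo the swap argument.
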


\begin{thm}[Uniqueness and characterization of projected PPS diffusion limits]
\label{lhatconvergencethm}
Let $f_1,\ldots,f_n\in \mathscr{S}(\mathbb R_+^2)$ and suppose that,
for each $j\in\J$, $\pd_j$ and $\fl_{0,j}$ have densities $g^j,h^j$
and y-tailed CDFs $F^{\pd_j},F^{\fl_{0,j}}$ in
$\mathbf{C}_b^2(\R_+^2)$.

Let
\[
\left(
\langle f_1,\hat{\boldsymbol{\ssp}}(\cdot)\rangle,
\ldots,
\langle f_n,\hat{\boldsymbol{\ssp}}(\cdot)\rangle,
\wmass(\hat{\boldsymbol{\tm}}(\cdot)),
\hat G(\cdot)
\right)
\]
be any subsequential limit of
\[
\left\{
\left(
\langle f_1,\hat{\boldsymbol{\ssp}}^m(\cdot)\rangle,
\ldots,
\langle f_n,\hat{\boldsymbol{\ssp}}^m(\cdot)\rangle,
\wmass(\hat{\boldsymbol{\tm}}^m(\cdot)),
\hat G^m(\cdot)
\right)
\right\}_{m=1}^{\infty}.
\]
Then this subsequential limit can be jointly realized with a process
\[
(\boldsymbol{U}_{f_1}(\cdot),\ldots,
\boldsymbol{U}_{f_n}(\cdot),\boldsymbol{U}_1(\cdot))
\]
having the law specified in Definition~\ref{Udefdef}
corresponding to the fluid model solution
$\bar{\boldsymbol{\ssp}}$, such that
$(\wmass(\hat{\boldsymbol{\tm}}(\cdot)),\hat G(\cdot))$
is a continuous solution of \eqref{LGsystem1}--\eqref{LGsystem2}
driven by $\boldsymbol U_1$.

Moreover, for $1\leq i\leq n$, $1\leq j\leq J$, and $t\geq0$,
\begin{align*}
\langle f_i,\hat{\ssp}_j(t)\rangle
&=
U_{f_i}^j(t)
+p_j\hat{G}(t)
\int_{\R_+^2}
h^j_1(x+p_j\bar{G}(t),y+t)f_i(x,y)\,d\lambda
\\
&\quad+
\int_0^t\int_{\R_+^2}
\ar_jp_j\hat{G}(s,t)
g^j_1(x+p_j\bar{G}(s,t),y+t-s)
f_i(x,y)\,d\lambda\,ds.
\numberthis
\label{rjlimitdef}
\end{align*}
Here $\lambda$ denotes Lebesgue measure on $\R_+^2$. Consequently, the joint law of every such subsequential limit is
uniquely determined.
\end{thm}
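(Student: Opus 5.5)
We prove Theorem \ref{lhatconvergencethm} by first writing a prelimit version of \eqref{rjlimitdef} and then passing to the limit along the subsequence. The starting point is the mass-transport identity \eqref{masstransportequationdef} with $s=0$, fluid-scaled, and centered by the fluid transport equation \eqref{fluidtransporteqn}. For $f\in\mathscr S(\R_+^2)$ this gives
\begin{align*}
\langle f,\hat\ssp_j^m(t)\rangle
&=\sqrt m\Bigl(\langle t_{p_j\bar G^m(t),t}f,\bar\ssp_j^m(0)\rangle-\langle t_{p_j\bar G(t),t}f,\fl_j(0)\rangle\Bigr)\\
&\quad+\sqrt m\Bigl(\tfrac1m\textstyle\sum_{i\le A_j^m(mt)}\langle t_{p_j\bar G^m(\bar U_i,t),t-\bar U_i}f,\delta^+_{(\ser_i,\pat_i)}\rangle-\int_0^t\ar_j\langle t_{p_j\bar G(r,t),t-r}f,\pd_j\rangle dr\Bigr).
\end{align*}
Here $\bar G^m(s,t)=\int_s^t \wmass(\bar{\boldsymbol\tm}^m(u))^{-1}du$.

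In each bracket we add and subtract the same expression with $\bar G^m$ replaced by $\bar G$. This splits the identity into two parts.
\begin{itemize}
\item The ``noise'' part collects three terms. The first is $F^{j,m}_f(p_j\bar G(t),t)$ from the initial condition. The second is the martingale term $\hat\mart^{\ap_j,j,m}_f(t,t)$, obtained by compensating each arrival's contribution by its $\pd_j$-mean. The third is the renewal term $\int_0^t\langle t_{p_j\bar G(s,t),t-s}f,\pd_j\rangle d\hat\ap^m_j(s)$. Together these give a prelimit $U^{j,m}_f(t)$.
\item The ``drift'' part consists of the differences caused by replacing $\bar G$ by $\bar G^m$.
\end{itemize}

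For the drift part we use the identity $\langle t_{a,b}f,\mu\rangle=\int f(x,y)\,\mu(d(x+a),d(y+b))$ for measures with densities. Combined with the mean value theorem in the first translation variable and $\sqrt m(\bar G^m-\bar G)=\hat G^m$, this linearizes the difference. For the initial-condition term we obtain $p_j\hat G^m(t)\int h^j_1(x+p_j\bar G(t),y+t)f\,d\lambda$. For the arrival term we obtain $\int_0^t\ar_jp_j\hat G^m(s,t)\int g^j_1(x+p_j\bar G(s,t),y+t-s)f\,d\lambda\,ds$. The sign is positive because translating by a larger service amount picks up $h^j$ at a shifted point, and differentiating in $x$ gives $+h^j_1$.

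The errors come from three sources. The first is the second-order Taylor remainder, controlled by $\|h^j\|_{\mathbf C^2_b},\|g^j\|_{\mathbf C^2_b}$ times $\sqrt m|\bar G^m-\bar G|^2=|\hat G^m|\,|\bar G^m-\bar G|$, which vanishes by Theorem \ref{thm: fluid limit} and C-tightness of $\hat G^m$. The second is replacing $\bar\ssp^m_j(0)$ by $\fl_j(0)$ inside the linearized term; this costs only $O(\sqrt m)\cdot o(1)$ against bounded $\hat G^m$, after writing the difference through $F^{j,m}$, so it is actually $O(1)\cdot o(1)$. The third is replacing the empirical arrival measure by $\ar_j\pd_j\,ds$, handled the same way using the martingale field.

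Next we pass to the limit. By Theorem \ref{tightnessresult}, together with the C-tightness of $\hat G^m$ implied by \eqref{LGsystem1} and of $\wmass(\hat{\boldsymbol\tm}^m)$, and by Lemma \ref{uconvlemma}, which gives joint convergence of $(U^{j,m}_{f_a},U^{j,m}_1)$ to the law in Definition \ref{Udefdef}, we take a further subsequence along which everything converges jointly. We then use Skorokhod representation. The linear functionals above are continuous in $(\hat G,U)$ for the uniform-on-compacts topology, so \eqref{rjlimitdef} holds in the limit. Taking $f=1$ and integrating $h_1^j$ in $x$ recovers \eqref{LGsystem2}. By Theorem \ref{Llimit} this identifies $(\wmass(\hat{\boldsymbol\tm}),\hat G)$ as the unique continuous solution driven by $\boldsymbol U_1$. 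The right side of \eqref{rjlimitdef} is therefore a measurable functional of $(\boldsymbol U_{f_1},\dots,\boldsymbol U_{f_n},\boldsymbol U_1)$, whose joint law is fixed, so the joint law is unique.

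The main obstacle is justifying the joint convergence and independence of the three noise components. This matters most for the martingale term, whose summands depend on $\bar G^m$ evaluated at later times. That dependence must first be replaced by the deterministic $\bar G$, with an error that is uniformly negligible in probability. We would attempt this with a second-moment bound on the compensated sum, uniform over a compact family of translates of $f$ indexed by $(a,b)\in[0,K]^2$. The bound would use Lipschitz dependence of $t_{a,b}f$ on $(a,b)$, valid since $f\in\mathscr S$ and $g^j\in\mathbf C^2_b$, together with a chaining argument. The third moment assumptions supply the needed renewal estimates.
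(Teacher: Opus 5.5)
Your overall architecture matches the paper's. You center \eqref{masstransportequationdef} with \eqref{fluidtransporteqn} to get an exact decomposition and linearize the drift by the mean value theorem using the densities $h^j,g^j$. You then take a joint Skorokhod representation along a further subsequence, pass to the limit term by term, use $f\equiv 1$ for \eqref{LGsystem2}, and fix the law by Gr\"onwall uniqueness via Theorem~\ref{Llimit}. The gap is in the one place where you depart from the paper, which is also the step you leave open.

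You center the noise at the deterministic $\bar G$, so your $U^{j,m}_f$ uses $F^{j,m}_f(p_j\bar G(t),t)$ and compensated sums with deterministic shifts. That is not the process in \eqref{Udef}, so Lemma~\ref{uconvlemma} does not literally cover the convergence you cite it for. You also inherit your second and third error terms. The second is harmless by Assumption~\ref{assumptions}\ref{initialconditionsassumption}. The third is the discrepancy between the compensated sums at the random shifts $p_j\bar G^m(\iinta_i^{j,m}/m,t)$ and at the deterministic ones. It is your ``main obstacle'', and your sketch for it would not work as written, for two reasons.
\begin{enumerate}
\item The summands are not martingale differences for any fixed index, because $\bar G^m(t)$ depends on the future. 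You must split $\bar G^m(\iinta_i^{j,m}/m,t)=\bar G^m(t)-\bar G^m(\iinta_i^{j,m}/m)$, treat $p_j\bar G^m(t)$ as a free parameter $s$, and treat $\bar G^m(\iinta_i^{j,m}/m-)$ as predictable. This is what $\othermart_\psi$ and the field $X^{\ap,j,m}_f(r,s,t)$ of Lemma~\ref{martingalelimlem} do.
\item For $f\in\mathscr S(\R_+^2)$ that does not vanish on $\partial\R_+^2$, $t_{a,b}f$ jumps across $\{x=a\}\cup\{y=b\}$, so it is not Lipschitz in $(a,b)$. The variance of an increment of the field is then only of order $|a-a'|+|b-b'|$. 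Second moments of that size cannot be chained, since Kolmogorov's criterion needs an exponent exceeding the parameter dimension. The paper instead bounds rectangle increments, $E[(\xi^{\ap}_i)^2]\le C(s_2-s_1)(t_2-t_1)$, and this product form is what feeds the multiparameter tightness argument.
\end{enumerate}

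The remedy is to keep the random shift inside the noise, as the paper does. Define $U^{j,m}_f$ by \eqref{Udef}, with $F^{j,c,m}_f(p_j\bar G^m(t),t)$ and $\Phi^{j,m}_{f,t}(t)$. Then \eqref{mainMeq} is an exact identity whose drift terms are already integrals against $\fl_{0,j}$ and $\pd_j\,d\bar\ap^m_j$. Your second and third errors never arise, and Lemma~\ref{uconvlemma} gives the joint convergence of the noise directly. What remains is exactly your linearization and limit passage.

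Two smaller points. For $f\equiv1$ your remainder bound through $\|f\|_{L^1(\lambda)}$ is unavailable. Use the $\mathbf C^2_b$ y-tailed CDFs instead, equivalently continuity of $h^{j,t}(x)=\int_t^\infty h^j(x,y)\,dy$, as the paper does. You should also record that \eqref{LGsystem1} passes to the limit from its prelimit identity, which is immediate.
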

\section{Proofs of Fluid Limit Results}
\label{sec: fluid proofs}
For the entirety of this section, we will be working with a sequence of fluid-scaled models $\{\bar{\boldsymbol{\ssp}}^m(\cdot)\}_{m=1}^{\infty}$ that satisfies Assumption \ref{assumptions}, aside from the third moment bounds in \ref{fllnassumption2}, the second claim in \ref{initialconditionsassumption}, and the entirety of \ref{cltforrenewalassumption}, which are only required for the diffusion limit.
\subsection{Uniqueness Of Fluid model Solutions}
\begin{prop}[Uniqueness of Fluid Model Solutions]
Let $(\boldsymbol{\ar}, \boldsymbol{p}, \boldsymbol{\pd})$ be fluid model parameters and $\boldsymbol{\fl}_0\in \M^J.$ Let $\boldsymbol{\pd},\boldsymbol{\fl}_0$ be nonzero vectors of measures with Lipschitz continuous marginal CDFs.
    Let $\boldsymbol{\fl}(\cdot), \tilde{\boldsymbol{\fl}}(\cdot)$ be two fluid model solutions with these parameters and the same initial condition.
    Then $\boldsymbol{\fl}(\cdot) = \tilde{\boldsymbol{\fl}}(\cdot).$
\end{prop}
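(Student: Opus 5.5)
The plan is to reduce uniqueness of the measure-valued solution to uniqueness of the scalar quantity $\wmass(\langle\boldsymbol 1,\boldsymbol\fl(t)\rangle)$. By the transport representation \eqref{fluidtransporteqn} (Remark following Definition \ref{notoverdef}), each $\fl_j(t)$ is determined explicitly by $\fl_j(0)$, $\pd_j$, $\ar_j$ and the function $\bar G(\cdot,\cdot)$. In turn, $\bar G$ is determined by $w(t):=\sum_i p_i\langle 1,\fl_i(t)\rangle$. So it suffices to show that $w=\tilde w$, where $\tilde w$ is the analogous quantity for $\tilde{\boldsymbol\fl}$. Taking $f=1_{(0,\infty)^2}$ in \eqref{fluidtransporteqn} (legitimate by the remark, since $\fl_j$ does not charge $\partial\R_+^2$) gives the closed scalar fixed-point equation
\[
w(t)=\sum_{j=1}^J p_j\Big(\fl_{0,j}\big((p_j\bar G(t),\infty)\times(t,\infty)\big)+\ar_j\int_0^t \pd_j\big((p_j\bar G(r,t),\infty)\times(t-r,\infty)\big)\,dr\Big),
\]
where $\bar G(r,t)=\int_r^t w(u)^{-1}du$. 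The same equation holds for $\tilde w$ with $\tilde G$.

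The first step is to obtain a positive lower bound for $w$ and $\tilde w$ on each compact $[0,T]$. Both functions are continuous, since $\boldsymbol\fl$ is continuous into $\M^J$ and the mass functional is continuous under weak convergence. By condition \eqref{overloadedconditiondef} of Definition \ref{notoverdef}, $w(t)>0$ for $t>0$, and $w(0)>0$ because $\boldsymbol\fl_0$ is nonzero. Hence $\min_{[0,T]}w=:c_T>0$, and similarly for $\tilde w$. It follows that $u\mapsto 1/w(u)$ is bounded by $1/c_T$ and is Lipschitz as a function of $w$ on $[c_T,\infty)$. Consequently
\[
|\bar G(r,t)-\tilde G(r,t)|\le C_T\int_r^t|w(u)-\tilde w(u)|\,du .
\]

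The second step is a Gronwall estimate. The map $x\mapsto \fl_{0,j}((x,\infty)\times(t,\infty))$ is Lipschitz in $x$, uniformly in $t$: its increments are bounded by increments of the marginal CDF in the first coordinate, which is Lipschitz by hypothesis. The same holds for $\pd_j$. Subtracting the two fixed-point equations therefore gives
\[
|w(t)-\tilde w(t)|\le K\,|\bar G(t)-\tilde G(t)|+K\int_0^t|\bar G(r,t)-\tilde G(r,t)|\,dr\le K'(1+T)\int_0^t|w(u)-\tilde w(u)|\,du ,
\]
and Gronwall's inequality yields $w=\tilde w$ on $[0,T]$. Since $T$ is arbitrary, $w=\tilde w$ everywhere, so $\bar G=\tilde G$. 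Then \eqref{fluidtransporteqn} gives $\langle f,\fl_j(t)\rangle=\langle f,\tilde\fl_j(t)\rangle$ for all bounded continuous $f$, hence $\boldsymbol\fl=\tilde{\boldsymbol\fl}$.

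The main obstacle I expect is justifying the use of \eqref{fluidtransporteqn} with the indicator test function $1_{(0,\infty)^2}$, since the remark only sketches this. If it needs care, I would approximate the indicator from below by $\mathbf C^1_b$ functions $g_n$ vanishing with their derivatives on $\partial\R_+^2$, and pass to the limit by monotone convergence. The key facts are that $\fl_j(s)$ and $\pd_j$ do not charge $\partial\R_+^2$, and that the translated sets are translated boundaries, whose mass vanishes because the marginal CDFs are continuous. A secondary point is that the lower bound $c_T$ relies on continuity of $w$ together with positivity at every point; condition \eqref{overloadedconditiondef} supplies exactly the pointwise positivity needed.
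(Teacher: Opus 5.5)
Your proposal is correct and follows essentially the paper's argument: reduce to $\bar G=\tilde G$ via \eqref{fluidtransporteqn}, write the weighted total mass through strip masses of $\fl_{0,j}$ and $\pd_j$, use the Lipschitz marginal CDFs together with the Lipschitz property of $x\mapsto 1/x$ away from zero, and close with Gr\"onwall. The differences are cosmetic: the paper applies Gr\"onwall to $\bar G-\tilde G$ on $[0,t^*_\epsilon)$, where $t^*_\epsilon$ is the first time either total mass reaches $\epsilon$, and then lets $\epsilon\downarrow0$; you apply it to $w-\tilde w$ on $[0,T]$, using the positive minimum of the continuous, pointwise-positive total mass, which is the same fact made explicit.
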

\begin{proof}
    It is clear from the form of \eqref{fluidtransporteqn} that the fluid model solutions $\boldsymbol{\fl}(\cdot)$ and $\tilde{\boldsymbol{\fl}}(\cdot)$ are fully determined by the initial condition and their associated $\bar{G}(\cdot)$ and $\tilde{G}(\cdot)$ functions, respectively.
    Thus, it suffices to prove $\bar{G}(\cdot)=\tilde{G}(\cdot)$.

    Define $t_{\epsilon}^*:= \inf\{t:\wmass(\boldsymbol{\flm}(t)) \wedge \wmass(\tilde{\boldsymbol{\flm}}(t)) \leq \epsilon\}.$ Then, the function $f(x):= 1/x$ is Lipschitz on $[\epsilon, \infty)$ with Lipschitz constant $C_{\epsilon},$ and so for each $t < t_{\epsilon}^*$ we have
    \begin{align*}
        |\bar{G}(t)-\tilde{G}(t)| &\leq \int_{0}^t \bigg|\frac{1}{\wmass(\boldsymbol{\flm}(s))}-\frac{1}{\wmass(\tilde{\boldsymbol{\flm}}(s))} \bigg|ds\\
        & \leq \int_0^t C_{\epsilon} |\wmass(\boldsymbol{\flm}(s)) - \wmass(\tilde{\boldsymbol{\flm}}(s))|ds.
    \end{align*}
    Let $C_{1,i}, C_{2,i}$ be the Lipschitz constants for the marginal of $\fl_{0,i}$ and $\pd_{i}$ in the $x$-direction, respectively, for each $i \in \J.$
    Now, applying \eqref{fluidtransporteqn} and, for notational clarity, writing $H_-(s,t):= \min\{\tilde{G}(s,t),\bar{G}(s,t)\}, H_+(s,t):=\max\{\tilde{G}(s,t),\bar{G}(s,t)\}$, we obtain
   
        \begin{align*}
|\bar{G}(t)-\tilde{G}(t)|
&\leq
\int_0^t
C_{\epsilon}
\sum_{i=1}^J
p_i
\fl_{0,i}
\left(
\bigl(p_iH_-(0,s),p_iH_+(0,s)\bigr]
\times(s,\infty)
\right)
\,ds\\
&\quad+
\int_0^t
C_{\epsilon}
\sum_{i=1}^J
p_i
\int_0^s
\ar_i
\pd_i
\left(
\bigl(p_iH_-(r,s),p_iH_+(r,s)\bigr]
\times(s-r,\infty)
\right)
\,dr\,ds\\
&\leq
\int_0^t
C_{\epsilon}
\sum_{i=1}^J
p_i^2C_{1,i}
|\bar{G}(s)-\tilde{G}(s)|
\,ds\\
&\quad+
\int_0^t
C_{\epsilon}
\sum_{i=1}^J
p_i^2\ar_iC_{2,i}
\int_0^s
\left(
|\bar{G}(s)-\tilde{G}(s)|
+
|\bar{G}(r)-\tilde{G}(r)|
\right)
\,dr\,ds\\
&\leq
\int_0^t
C_{\epsilon}
\sum_{i=1}^J
p_i^2C_{1,i}
|\bar{G}(s)-\tilde{G}(s)|
\,ds\\
&\quad+
C_{\epsilon}
\sum_{i=1}^J
p_i^2\ar_iC_{2,i}
\int_0^t
\left(
s|\bar{G}(s)-\tilde{G}(s)|
+
(t-s)|\bar{G}(s)-\tilde{G}(s)|
\right)
\,ds.
\end{align*}
    Then, uniqueness follows from Gr\"onwall's inequality on $[0,t_{\epsilon}^*).$ Since $\epsilon>0$ was chosen arbitrarily and fluid model solutions are continuous and nonzero, this completes the proof.
\end{proof}
Next, we prove a bound on the mass that is near the boundary in the prelimit. This will be useful in proving C-tightness of the measure-valued process.
\begin{lem}
\label{bdeltalemma}
    Let $B_{\delta}:= \{(x,y)\in \R_+^2:x \text{ or }y \leq \delta\}.$
    Then for $j \in \J,T > 0,\epsilon>0$
    \begin{equation}
        \lim_{\delta\rightarrow 0} \limsup_{m\rightarrow \infty } P\{\sup_{t\in [0,T]}\bar{\ssp}_j^m(t)(B_{\delta})\geq \epsilon\} = 0
        \label{bdeltabound}
    \end{equation}
\end{lem}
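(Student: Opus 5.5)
We split $B_\delta$ into the strip $\{x\le\delta\}$ (small residual service) and the strip $\{y\le\delta\}$ (small fluid-scaled residual patience). By a union bound it suffices to prove \eqref{bdeltabound} for each strip separately, and also separately for the jobs initially present and for the new arrivals.

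\emph{Patience strip.} We handle the $y$-strip using the fact that residual patience decreases deterministically at unit rate in fluid scale. By \eqref{masstransportequationdef} with $f=1_{\{y\le\delta\}}$, a job is counted in $\bar\ssp_j^m(t)(\R_+\times[0,\delta])$ only if its fluid-scaled residual patience at time $t$ lies in $(0,\delta]$. For initial jobs this means $\tilde\pat_i^j/m\in(t,t+\delta]$. For an arrival at fluid time $s\le t$ it means $\pat_i^j\in(t-s,t-s+\delta]$. This holds regardless of $G$.

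Cover $[0,T]$ by the grid $t_k=k\delta$. For $t\in[t_k,t_{k+1}]$, the relevant patience windows lie inside $(t_k,t_k+2\delta]$. Hence
\[
\sup_{t\le T}\bar\ssp_j^m(t)(\R_+\times[0,\delta])
\le\max_k\Big[\bar\ssp_j^m(0)\big(\R_+\times(t_k,t_k+2\delta]\big)+\tfrac1m\#\{i\le A_j^m(mT):\text{arrival patience in a window of length }2\delta\}\Big].
\]
The initial term converges weakly to $\bar\ssp_{0,j}(\R_+\times(t_k,t_k+2\delta])$. This is at most $2C\delta$ because the $y$-marginal CDF of $\bar\ssp_{0,j}$ is Lipschitz.

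The arrival term needs more care, since the window depends on the arrival time. We further discretize arrival times into blocks of length $\delta$. Within a block, the union of windows for a fixed $k$ has length at most $3\delta$. The fluid count in a block is then controlled by a functional law of large numbers for the marked renewal sums $\frac1m\sum_{i\le A^m_j(m\cdot)}1_{\{\pat_i^j\in I\}}$. This law follows from the FLLN for $\bar A^m_j$, which holds by Assumption~\ref{assumptions}, together with a Glivenko--Cantelli argument, uniformly over intervals $I$. The limit is bounded by $\sum_{\text{blocks}}\ar_j\delta\cdot\pd_j(\R_+\times I)\le \ar_j T\cdot 3C\delta$ by the Lipschitz marginal of $\pd_j$. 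This is $O(\delta)$, and hence smaller than $\epsilon$ for small $\delta$.

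\emph{Service strip.} The $x$-strip is harder, because residual service decreases at the random rate $p_j/\wmass(\bar{\boldsymbol\tm}^m)$. The main obstacle is that $\bar G^m$ is not yet known to converge. We avoid this by using only a crude bound on its increments. We need an upper bound on $\bar G^m(s,t)$ for $|t-s|\le\delta$, which requires a lower bound on $\wmass(\bar{\boldsymbol\tm}^m)$.

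Since $\bar{\boldsymbol\ssp}_0\neq0$ and $\rho>1$, we first show that, with probability tending to one, $\inf_{t\le T}\wmass(\bar{\boldsymbol\tm}^m(t))\ge c>0$. On $[0,t_0]$ this holds because the initial jobs cannot all leave quickly: their patience marginal is Lipschitz and their total service requirement is positive. After $t_0$ it holds because arrivals of total work rate $\rho>1$ keep mass present. Concretely, if $\wmass(\bar{\boldsymbol\tm}^m)$ were small on an interval, then $\bar G^m$ would grow fast there and would clear recent arrivals. However, jobs with service $\ge\eta$ need potential $\eta/p_j$, and the total work processed per unit time is at most $1$. A counting argument along these lines, following \cite{gromollzwart}, gives the lower bound.

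On this event the modulus satisfies $\bar G^m(s,t)\le (t-s)/c$. The grid argument then transfers exactly as in the patience strip, now using the $x$-marginal Lipschitz constants of $\bar\ssp_{0,j}$ and $\pd_j$. The windows become $x$-intervals of length $O(\delta+\delta/c)$. The bound $\langle x^{1+\epsilon},\bar\ssp_{0,j}\rangle<\infty$ is not needed here, except to guarantee that the initial masses are tight.

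We expect the uniform positive lower bound on total mass to be the hardest step. A cleaner route may be to bound $\bar\ssp_j^m(t)(\{x\le\delta\})$ by the number of jobs whose service finishes within an extra potential $\delta/p_j$. Such jobs each carry residual work at most $\delta$ but have original service at least $\eta$ outside a set of $\pd_j$-mass $O(\eta)$. Comparing total work shows that the count of such jobs is $O(\eta)+O(\delta/\eta)$, and optimizing over $\eta$ finishes the argument.
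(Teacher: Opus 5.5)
Your decomposition follows the paper's. You use a union bound over initial jobs versus arrivals and over the service versus patience strips. For the initial condition you use Lipschitz marginals plus weak convergence. For arrivals you partition arrival times into blocks of length $\delta$, so that all windows from one block lie in a single short interval, and then apply a blockwise empirical-CDF (DKW/Glivenko--Cantelli) bound. For initial jobs in the service strip you need no control of $\bar G^m$: bound by $\sup_{x\geq0}\bar\ssp^m_j(0)\big((x,x+\delta]\times(0,\infty)\big)$, as the paper does.

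The real divergence is the arrival part of the service strip. The paper gets the common window there by asserting that jobs are served at rate at most $1$, so that $|p_j\bar G^m(u,t)-p_j\bar G^m(u',t)|\leq\delta$ for $u,u'$ in one block. Your objection is well founded. The bound $p_j/\wmass(\boldsymbol{\tm}^m)\leq 1$ holds in unscaled time, but in fluid time the per-job rate is $p_j/\wmass(\bar{\boldsymbol{\tm}}^m(t))$. This exceeds $1$ whenever $\wmass(\bar{\boldsymbol{\tm}}^m(t))<p_j$, and a priori it is bounded only by $m$. So a high-probability lower bound on $\inf_{t\leq T}\wmass(\bar{\boldsymbol{\tm}}^m(t))$ is genuinely needed; the paper invokes one only later, in Remark~\ref{nonzerorem}. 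On this point you are more careful than the paper.

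The gap is that you assert this lower bound rather than prove it, and the sketch as written would not work. The counting step you describe compares work processed at rate $\leq 1$ with arrivals of size $\geq\eta$. By itself it requires $\eta\sum_j\ar_j\pd_j([\eta,\infty)\times\R_+)>1$ for some $\eta$, which $\rho>1$ does not imply. Your fallback ``cleaner route'' also fails: comparing total work only bounds the number of present jobs with original service $\geq\eta$ and residual $\leq\delta$ by about $T/(\eta-\delta)$, with no factor $\delta$.

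One way to close the gap uses two time regimes, a workload bound and a truncation.
\begin{enumerate}
\item On $[0,t_0]$, the initial class-$j$ mass lost by time $t_0$ is at most $\bar\ssp^m_j(0)(\R_+\times(0,t_0])+\bar\ssp^m_j(0)((0,\eta)\times\R_+)$, plus $t_0/\eta$ summed over classes. The last term holds because each completed initial job of size $\geq\eta$ absorbed at least $\eta$ of the fluid work, which totals at most $t_0$. By the Lipschitz marginals the lost mass is at most $C(t_0+\eta)+t_0/\eta+o_P(1)$. This is below $\frac12\sum_j\langle 1,\bar\ssp_{0,j}\rangle$ once $\eta$, and then $t_0$, are small.
\item On $[t_0,T]$, apply \eqref{lowerboundoffluidmodel} with $s=t-\epsilon_0$ and $\epsilon_0\leq t_0$. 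Use the FLLN, uniformly in $t$, for $\frac1m\sum_{i\leq\ap^m_j(m\cdot)}\ser_i^j1_{\{\pat_i^j>\epsilon_0\}}$. This gives total workload at least $(\rho-1)\epsilon_0/4$ with high probability for small $\epsilon_0$.
\item Convert work into a job count by truncation. The work carried by jobs with residual service above $K$ is at most $\sum_j\big(\langle x1_{\{x>K\}},\bar\ssp^m_j(0)\rangle+\frac1m\sum_{i\leq\ap^m_j(mT)}\ser_i^j1_{\{\ser_i^j>K\}}\big)$. This is small for large $K$ by the $\langle x^{1+\epsilon},\cdot\rangle$ and first-moment assumptions.
\end{enumerate}
Together these give $\sum_j\bar\tm^m_j(t)\geq c>0$ on $[0,T]$ with high probability. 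Hence $p_j\bar G^m(u,u')\leq p_j(u'-u)/(c\min_i p_i)$, and your block argument runs with windows of length $\delta\big(1+p_j/(c\min_i p_i)\big)$.
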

\begin{proof}
    By \eqref{masstransportequationdef}, we have that for $j \in \J,t,\delta >0,$
\begin{align*}
   &\lim_{\delta \rightarrow 0}\limsup_{m\rightarrow \infty} P(\sup_{t\in[0,T]} \bar{\ssp}_j^m(t)(B_{\delta})\geq \epsilon )\\&\leq  \lim_{\delta \rightarrow 0}\limsup_{m\rightarrow \infty} P(\sup_{x \geq 0 }\langle 1_{(x,x+\delta]\times(0,\infty)}, \bar{\ssp}_j^m(0)\rangle\geq \epsilon/4) 
   \\&+  \lim_{\delta \rightarrow 0}\limsup_{m\rightarrow \infty} P(\sup_{y \geq 0 }\langle 1_{(0,\infty)\times(y,y+\delta]}, \bar{\ssp}_j^m(0)\rangle\geq \epsilon/4) \\&+\lim_{\delta \rightarrow 0}\limsup_{m\rightarrow \infty} P\left(\sup_{t \in [0,T]}\frac{1}{m}\sum_{i=1}^{m\bar{\ap}^m_j(t)}1_{\{\pat_i^j \in (t-\iinta_i^{j,m}/m,t+\delta-\iinta_i^{j,m}/m]\}}\geq \epsilon/4\right)\\&+
    \lim_{\delta \rightarrow 0}\limsup_{m\rightarrow \infty}P\left(\sup_{t \in [0,T]}\frac{1}{m}\sum_{i=1}^{m\bar{\ap}^m_j(t)}1_{\{\ser_i^j \in (p_j\bar{G}^m(\iinta_i^{j,m}/m,t),p_j\bar{G}^m(\iinta_i^{j,m}/m,t)+\delta]}\geq \epsilon/4\right)
 \numberthis \label{controlledoscdifference}
\end{align*}
Therefore, we need only prove each term on the RHS above goes to zero.
We do so now.
Taking a Skorokhod representation, we see that there is a sequence $\nu_j^m\sim \bar{\ssp}^m_{0,j}$ that converges almost surely to  $\nu_j \sim \bar{\ssp}_{0,j}.$
Because almost sure convergence implies convergence in probability, we see that for each $\delta >0,$ ${\liminf_{m\rightarrow 
\infty} P(d(\nu_j^m,\nu_j) <\delta) = 1}.$
Therefore, using the form of the L\'evy-Prokhorov metric on $\M,$ we conclude that
\small
\begin{equation}
\liminf_{m\rightarrow 
\infty} P\{\nu_j^m((x,x+\delta]\times(0,\infty))\leq \nu_j(((x-\delta)^+,x+2\delta]\times(0,\infty) +2\delta \text{ for each }(x,y)\in \R_+^2\} = 1.
\label{ctnp2}
\end{equation}
\begin{equation}
\liminf_{m\rightarrow 
\infty} P\{\nu_j^m((0,\infty)\times(y,y+\delta])\leq \nu_j((0,\infty)\times(y-\delta)^+,y+2\delta]) +2\delta \text{ for each }(x,y)\in \R_+^2\} = 1.
\label{ctnp6}
\end{equation}
\normalsize
Because we have assumed that the joint CDF of $\bar{\ssp}_{0,j},$ and thus $\nu_j,$ is continuous almost surely, for any $a>0,$ \begin{equation}\lim_{\delta \rightarrow 0}P\left\{ \sup_{x\in \R_+} \langle 1_{[x,x+\delta]\times (0,\infty)}, \nu_j\rangle\vee \sup_{y \in \R_+}\langle 1_{(0,\infty)\times [y,y+\delta]}, \nu_j\rangle < a\right\} =1.\label{ctnp3}\end{equation} 
(For a proof of the equivalence of the above statement with the marginals satisfying the no atoms condition, see Lemma A.1. in \cite{gromollpuhawilliams}.)
We conclude that the first two terms on the RHS of \eqref{controlledoscdifference} are both zero.
We now move to the fourth term on the RHS of \eqref{controlledoscdifference}.

In order to prove the claim, we need to show that, for each $\eta>0,$ there exists a $\delta'$ such that for all $\delta< \delta',$ $\limsup_{m\rightarrow \infty}P\left(\sup_{t \in [0,T]}\frac{1}{m}\sum_{i=1}^{m\bar{\ap}^m_j(t)}1_{\{\ser_i^j \in (p_j\bar{G}^m(\iinta_i^{j,m}/m,t),p_j\bar{G}^m(\iinta_i^{j,m}/m,t)+\delta]}\geq \epsilon/4\right)<\eta.$
Fix $\delta>0$ and define
\[
K_\delta:=\left\lceil\frac{T}{\delta}\right\rceil,
\qquad
t_k^\delta:=(k\delta)\wedge T,
\qquad
k=0,\ldots,K_\delta.
\]
Define
\[
I_k^\delta:=[t_k^\delta,t_{k+1}^\delta),
\qquad
k=0,\ldots,K_\delta-2,
\]
and define the final interval by
\[
I_{K_\delta-1}^\delta
:=
[t_{K_\delta-1}^\delta,T].
\]
Then $\{I_k^\delta\}_{k=0}^{K_\delta-1}$ is a partition of
$[0,T]$ into nonempty intervals satisfying
\[
|I_k^\delta|\leq\delta,
\qquad
k=0,\ldots,K_\delta-1.
\]

Then, we see that, pathwise,
\begin{align*}
    &\sup_{t \in [0,T]}\frac{1}{m}\sum_{i=1}^{m\bar{\ap}^m_j(t)} 1_{\{\ser_i^j \in (p_j\bar{G}^m(\iinta_i^{j,m}/m,t),p_j\bar{G}^m(\iinta_i^{j,m}/m,t)+\delta]\}}\\
    &\leq \sum_{k=0}^{  K_{\delta}-1 }  \sup_{a \geq 0}\frac{1}{m}\sum_{i=1}^{m\bar{\ap}^m_j(T)} 1_{\{\iinta_i^{j,m}/m \in I_k^{\delta}\}}1_{\{\ser_i^j \in (a,a+2\delta]\}}
    \numberthis
    \label{eq: final inequality for bdeltabound}
\end{align*}
because jobs are served at a rate less than or equal to $1,$ and thus, for $u,u'\in I_k^{\delta},$  $|p_j\bar{G}^m(u,t)-p_j\bar{G}^m(u',t)| \leq \delta$ for any choice of $\delta \geq 0,$ $k=0,1,2,....$
Now, defining the number of jobs in the $m$th system that arrives on the time interval $I_k^{\delta}$ as follows:
\begin{equation}
    \label{eq: Nkdef}
    N_k^{\delta,m}:=\sum_{i=1}^{m\bar{\ap}^m_j(T)} 1_{\{\iinta_i^{j,m}/m \in I_k^{\delta}\}},
\end{equation}
we claim that
\begin{equation}
    X_k^{\delta,m}(\cdot):=\frac{1}{N_k^{\delta,m}}\sum_{i=1}^{m\bar{\ap}^m_j(T)} 1_{\{\iinta_i^{j,m}/m \in I_k^{\delta}\}}1_{\{\ser_i^j \in [0,\cdot]\}}\rightarrow X_k^{\delta}(\cdot):= \pd_j([0,\cdot]\times (0,\infty)) \label{eq: probabilitylimit}
\end{equation}
in probability.
Indeed, let $S_k^{\delta,m}:= \{i:\iinta_i^{j,m}/m\in I_{k}^{\delta}\}$ and $Y_k^{\delta,m}(\cdot):=\frac{1}{m}\sum_{i=1}^m 1_{\{\ser_i^j \in [0,\cdot]\}}.$
Then, using the independence of the interarrival and service times, we see that
\begin{align*}
    &P(\sup_{a\geq 0}|X_k^{\delta,m}(a)-X_k^{\delta}(a)|>\epsilon)\\& = \sum_{L=1}^{\infty} P(\sup_{a\geq 0}|X_k^{\delta,m}(a)-X_k^{\delta}(a)|>\epsilon|N_k^{\delta,m} = L)P(N_k^{\delta,m} = L)\\
    &=\sum_{L=1}^{\infty} P(\sup_{a\geq 0}|Y_k^{\delta,L}(a)-X_k^{\delta}(a)|>\epsilon)P(N_k^{\delta,m} = L)\\
    &\leq \sup_{L\geq M}P(\sup_{a\geq 0}|Y_k^{\delta,L}(a)-X_k^{\delta}(a)|>\epsilon)+ P(N_k^{\delta,m} < M)
\end{align*}
for any choice of $\epsilon>0, M \in \N.$
Taking the limit as $m \rightarrow \infty$ on both sides applying any standard concentration inequality for the first term (e.g., the Dvoretzky–Kiefer–Wolfowitz inequality), and using the fact that, by the weak law of large numbers, $\frac{N_k^{\delta,m}}{m}\rightarrow |I_k|\ar_j$ in probability, the convergence in \eqref{eq: probabilitylimit} is proven.

Returning to \eqref{eq: final inequality for bdeltabound}, we find that
\begin{align*}
    \sup_{t \in [0,T]}\frac{1}{m}\sum_{i=1}^{m\bar{\ap}^m_j(t)} 1_{\{\ser_i^j \in (p_j\bar{G}^m(\iinta_i^j/m,t),p_j\bar{G}^m(\iinta_i^j/m,t)+\delta]\}}&
    \leq \sum_{k=0}^{  K_{\delta}-1 } \frac{N_k^{\delta,m}}{m}\sup_{a\geq 0}(X_k^{\delta,m}(a+2\delta)-X_k^{\delta,m}(a))\\
    &\leq \sum_{k=0}^{ K_{\delta}-1} \frac{N_k^{\delta,m}}{m}\sup_{a\geq 0}(X_k^{\delta}(a+2\delta)-X_k^{\delta}(a))\\
    &+\sum_{k=0}^{  K_{\delta}-1} \frac{N_k^{\delta,m}}{m}2\sup_{0 \leq k\leq   K_{\delta}-1 } \sup_{a\geq 0} |X_k^{\delta,m}(a)-X_k^{\delta}(a)|\\
    & \leq \bar{\ap}_j^m(T)w_{\pd_j,x}(2 \delta)
    \\&+\bar{\ap}_j^m(T)2\sup_{0 \leq k\leq   K_{\delta}-1 } \sup_{a\geq 0} |X_k^{\delta,m}(a)-X_k^{\delta}(a)|
\end{align*}
where $w_{\pd_j,x}$ is the modulus of continuity of the marginal cdf of $\pd_j$ in the $x$ direction.
We conclude that
\begin{align*}
&P\left(\sup_{t \in [0,T]}\frac{1}{m}\sum_{i=1}^{m\bar{\ap}^m_j(t)}1_{\{\ser_i^j \in (p_j\bar{G}^m(\iinta_i^j/m,t),p_j\bar{G}^m(\iinta_i^j/m,t)+\delta]}\geq \epsilon/4\right)\\&
\leq P(2\ar_jT w_{\pd_j,x}(2\delta)\geq\epsilon/16)\\&+P(4\ar_jT \sup_{0 \leq k\leq   K_{\delta}-1 } \sup_{a\geq 0} |X_k^{\delta,m}(a)-X_k^{\delta}(a)| \geq \epsilon/16)\\&+P(\bar{\ap}_j^m(T)\geq 2\ar_j T)
\end{align*}
taking the limsup of both sides, we find that that the limsup of the left hand side will be zero so long as $\delta$ is sufficiently small that $2\ar_jT w_{\pd_j,x}(2\delta)<\epsilon/16.$
The proof for the third term in \eqref{controlledoscdifference} is identical once one observes that for each $k = 0,1,2,...,K_{\delta}-1,$ fixed $t \in [0,T],$ and each arrival time $\iinta_i^{j,m}/m\in I_k^{\delta}$ the intervals $[t-\iinta_i^{j,m}/m,t-\iinta_i^{j,m}/m+\delta)$ are all contained in one common interval of length $2 \delta.$
The same empirical CDF argument, now applied to the
$y$-marginal of $\vartheta_j$, proves that the third term in
\eqref{controlledoscdifference} is zero.

\end{proof}
\begin{prop}
\label{fluidtightnessprop}
    $\{\bar{\boldsymbol{\ssp}}^m(\cdot)\}_{m=1}^{\infty}$ is C-tight in $D([0,\infty), \M^J).$
\end{prop}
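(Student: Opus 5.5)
We will verify the standard compact containment plus oscillation criterion for C-tightness in $D([0,\infty),\M^J)$, in the form used by Gromoll--Puha--Williams and Gromoll--Robert--Zwart. It suffices to show, for each $j\in\J$ and $T>0$:
\begin{enumerate}[label=(\alph*)]
\item \emph{compact containment:} for each $\eta>0$ there is a compact $K\subset\M$ with $\liminf_m P(\bar{\ssp}_j^m(t)\in K \text{ for all } t\in[0,T])\ge 1-\eta$;
\item \emph{asymptotic regularity:} for each $\epsilon,\eta>0$ there is $\delta>0$ with $\limsup_m P(\mathbf{w}_T(\bar{\ssp}_j^m,\delta)>\epsilon)\le\eta$, where $\mathbf{w}_T$ is the modulus of continuity in the L\'evy--Prokhorov metric.
\end{enumerate}
C-tightness of the vector then follows componentwise.

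\textbf{Compact containment.} By \eqref{masstransportequationdef}, the total mass satisfies
\[
\langle 1,\bar{\ssp}_j^m(t)\rangle\le\langle 1,\bar{\ssp}_j^m(0)\rangle+\bar{\ap}_j^m(T),
\]
and both terms are tight by Assumption~\ref{assumptions}\ref{initialconditionsassumption} and the FLLN for renewal processes. For tails, note that residual service times never exceed initial ones, and residual (scaled) patience never exceeds initial patience. Hence, for a level $M$,
\[
\bar{\ssp}_j^m(t)\bigl(\{x\vee y>M\}\bigr)\le \bar{\ssp}_j^m(0)(\{x\vee y>M\})+\frac1m\sum_{i\le m\bar{\ap}_j^m(T)}1_{\{\ser_i^j>M\}\cup\{\pat_i^j>M\}} .
\]
The first term is controlled by the weak convergence of $\bar{\ssp}^m(0)$ together with tightness of its limit. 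The second converges, by the law of large numbers and a random change of time, to $\ar_jT\,\pd_j(\{x\vee y>M\})$, which is small for large $M$. Prokhorov's criterion on $\M(\R_+^2)$ then yields the compact set.

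\textbf{Oscillation.} For $s<t\le s+\delta$ in $[0,T]$, we compare $\bar{\ssp}_j^m(t)$ with $\bar{\ssp}_j^m(s)$ using \eqref{masstransportequationdef}, which expresses $\bar{\ssp}_j^m(t)$ as a shift of $\bar{\ssp}_j^m(s)$ plus new arrivals. Under the fluid scaling, the shift is by $p_j\bar G^m(s,t)\le\delta$ in $x$ (service rate at most $1$, as noted in the proof of Lemma~\ref{bdeltalemma}) and by $t-s\le\delta$ in $y$.
\begin{itemize}
\item[(i)] Atoms that stay in the interior move by at most $\delta$ in each coordinate. This costs at most $\delta$ in the L\'evy--Prokhorov metric.
\item[(ii)] Atoms that exit through the boundary during $[s,t]$ lie in $B_\delta$ at time $s$. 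Their total mass is bounded by $\sup_{u\le T}\bar{\ssp}^m_j(u)(B_\delta)$, which Lemma~\ref{bdeltalemma} makes small in probability uniformly in $m$ as $\delta\to0$.
\item[(iii)] Arrivals in $(s,t]$ have mass at most $\sup_{|t-s|\le\delta}(\bar{\ap}_j^m(t)-\bar{\ap}_j^m(s))$. This tends to $\ar_j\delta$ uniformly, by the FLLN $\bar{\ap}_j^m\to\ar_j\cdot$ in $D$, using Assumption~\ref{assumptions}\ref{fllnassumption1}.
\end{itemize}
Combining (i)--(iii), with probability tending to one we get $\mathbf{w}_T(\bar{\ssp}_j^m,\delta)\le\delta+\epsilon/2+2\ar_j\delta$, which is below $\epsilon$ for small $\delta$. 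Since jumps vanish in the limit, this also gives C-tightness rather than mere tightness.

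The main obstacle is step (ii): controlling the mass that exits near the boundary, uniformly in time. This is exactly why Lemma~\ref{bdeltalemma} was proved beforehand, so the remaining work is bookkeeping. Specifically, one must check that the L\'evy--Prokhorov distance between $\bar{\ssp}_j^m(s)$ restricted to $B_\delta^c$ and its transported version is at most $\delta$, while the deleted and added pieces are accounted for purely by total mass.
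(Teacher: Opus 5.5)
Your argument is correct and follows a genuinely different route from the paper.

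\textbf{The paper's route.} The paper uses Jakubowski's criterion. For compact containment it takes the set $\{\xi:\langle 1,\xi\rangle\vee\langle\pi_x,\xi\rangle\vee\langle\pi_y,\xi\rangle\le K\}$, so it relies on first-moment bounds. These draw on the finite means of $\pd_j$ and on the moment convergence in Assumption~\ref{assumptions}\ref{initialconditionsassumption}. It then reduces oscillation control to the real-valued projections $\langle f,\bar\ssp^m_j(\cdot)\rangle$, $f\in\mathbf{C}^1_b$, through \eqref{controlledoscdifference2}. There the interior drift is paid for by the term $(\|f_1\|+\|f_2\|)\bar\tm^m_j\delta$, obtained from the mean value theorem.

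\textbf{Your route.} You work directly with the L\'evy--Prokhorov modulus of the measure-valued path. The interior drift is handled by a coupling, since each surviving atom moves by $O(\delta)$, and exiting mass and new arrivals are charged by total mass. Your compact containment uses Prokhorov tail bounds from the monotonicity of residual service and patience times, so it needs no moment assumptions.

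\textbf{Comparison.} The ingredients are the same in both proofs: Lemma~\ref{bdeltalemma}, the arrival FLLN, and an $O(\delta)$ bound on the transport shift. Your version avoids the separating-class criterion and any smoothness of test functions, and it gives a pathwise bound on the metric modulus. The paper's projection-based version matches the test-function machinery it uses later in the diffusion section.

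\textbf{A shared caveat.} You cite the paper's claim that $p_j\bar G^m(s,t)\le t-s$ because the per-job service rate is at most $1$. The paper uses the same claim in \eqref{controlledoscdifference2} and in Lemma~\ref{bdeltalemma}, so this is not a weakness of your route relative to the paper's. The claim holds in unscaled time. In fluid time, however, the per-atom rate is $p_j/\wmass(\bar{\boldsymbol{\tm}}^m(\cdot))$, which exceeds $1$ whenever $\wmass(\bar{\boldsymbol{\tm}}^m)<p_j$ and can be as large as $m$. What is actually needed is a high-probability lower bound $\inf_{u\le T}\wmass(\bar{\boldsymbol{\tm}}^m(u))\ge c_T>0$. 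Given such a bound, the shift is at most $(t-s)/c_T$, and your steps (i)--(iii) go through unchanged with $\delta$ replaced by $\delta/c_T$.
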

\begin{proof}
    By Jakubowski's Criterion (See, e.g., Theorem 3.1 of \cite{jakubowski}), it suffices to prove:

\begin{enumerate}[label=(\roman*)]
\item For each $j \in \J,$ $\delta>0$, $T>0$ there exists a compact set $C_{\delta,T}$ in $\M$ such that $$\inf_{m\in\N} P\{\bar{\ssp}_j^m(t) \in C_{\delta,T}\hspace{2mm}\forall t\in [0,T] \} \geq 1-\delta, \text{ and}$$ \label{thirdjakubowski}
\item \label{withoutcont}For each $f \in \mathbf{C}^1_b(\R_+^2)$, $j \in \J,$ $\{\langle f, \bar{\ssp}^m_j(\cdot)\rangle\}_{m=1}^{\infty}$ is C-tight.\label{fourthjakubowski}
\end{enumerate}
We begin with \ref{thirdjakubowski}, which we will obtain by bounding the first moment and total mass, uniformly on $[0,T],$ by a sequence of random variables that are tight.
Examining \eqref{statespacedescriptordefequation}, we see that
$\sup_{[0,T]}\bar{\tm}_j^m(t) \leq \bar{\tm}_{0,j}^m+ \bar{\ap}^m_j(T),$ $\sup_{[0,T]}\langle \pi_x, \bar{\ssp}_j^m(t)\rangle  \leq \langle \pi_x,\bar{\ssp}_{0,j}^m\rangle+ \frac{1}{m}\sum_{i=1}^{m\bar{\ap}^m_j(T)}\ser_i^j,$ and $\sup_{[0,T]}\langle \pi_y, \bar{\ssp}^m_j(t)\rangle \leq \langle \pi_y,\bar{\ssp}_{0,j}^m\rangle+ \frac{1}{m}\sum_{i=1}^{m\bar{\ap}^m_j(T)}\pat_i^j.$
Applying the law of large numbers, we see that $\frac{1}{m}\sum_{i=1}^{m\bar{\ap}^m_j(T)}\ser_i^j \Rightarrow \ar_j \langle\pi_x, \pd_j\rangle T$ and $\frac{1}{m}\sum_{i=1}^{m\bar{\ap}^m_j(T)}\pat_i^j \Rightarrow \ar_j \langle \pi_y, \pd_j\rangle T.$
It then follows from Assumption \ref{initialconditionsassumption} that $\sup_{[0,T]}(\bar{\tm}^m_j(t)\vee \langle \pi_x, \bar{\ssp}^m_j(t)\rangle \vee \langle \pi_y, \bar{\ssp}^m_j(t)\rangle) $ is tight, and so for each $\delta >0$ there exists $K_{\delta}$ such that 
\begin{equation}
    \inf_{m\in\N} P\{\sup_{t \in [0,T]}(\bar{\tm}^m_j(t)\vee \langle \pi_x, \bar{\ssp}^m_j(t)\rangle \vee \langle \pi_y, \bar{\ssp}^m_j(t)\rangle) \leq K_{\delta}-1 \} \geq 1-\delta. \label{compactcontainmentfluid}
\end{equation}
Taking $C_{\delta,T} = \{\xi \in \M: \langle 1, \xi\rangle \vee \langle \pi_x, \xi\rangle \vee \langle \pi_y, \xi\rangle \in [0,K_{\delta}]\},$ which is a compact set in $\M,$ \ref{thirdjakubowski} is proved.

For \ref{fourthjakubowski}, fix $f \in C_b^1(\R_+^2).$
Compact containment of the sequence $\{\langle f, \bar{\ssp}^m_j(\cdot)\rangle\}_{m=1}^{\infty}$ follows immediately from \eqref{compactcontainmentfluid} and the fact that $|\langle f, \bar{\ssp}^m_j(\cdot)\rangle| \leq ||f|| \bar{\tm}^m_j(\cdot).$
To prove the controlled oscillations condition, we need to show that,
for each $T,\epsilon > 0,$
\begin{equation}
    \lim_{\delta \rightarrow 0}\liminf_{m \rightarrow \infty}P\{\sup_{t \in [0,T]}\sup_{h \in [0, \delta \wedge (T-t) ]}|\langle f, \bar{\ssp}^m_j(t+h)\rangle - \langle f, \bar{\ssp}^m_j(t)\rangle |\leq \epsilon\}=1 \label{controlled oscillationsproperty}
\end{equation}
However, we note that by \eqref{masstransportequationdef} and the fact that the service rate per job is at most $1,$ we have that for $j \in \J,t,\delta >0$
\begin{align*}
    \sup_{h \in [0,\delta]}|\langle f, \bar{\ssp}^m_j(t+h)\rangle - \langle f, \bar{\ssp}^m_j(t)\rangle |&\leq ||f|| (\bar{\ssp}_j^m(t)(B_{\delta})+\bar{\ap}_j^m(t+\delta)-\bar{\ap}^m_j(t))
    \\& +(||f_1||+||f_2||) \bar{\tm}_j^m(t) \delta,
    \numberthis \label{controlledoscdifference2}
\end{align*}
where $B_{\delta}$ is as defined in Lemma \ref{bdeltalemma}.
Therefore, the result follows from C-tightness of $\bar{\ap}^m_j(\cdot)$, Lemma \ref{bdeltalemma} and compact containment of $\bar{\tm}^m_j(\cdot)$.
\end{proof}
Now, we are almost ready to prove Theorem \ref{thm: fluid limit}. However, it will first be helpful to prove one more Lemma.
    \begin{lem}
    If $\bar{\boldsymbol{\ssp}}(\cdot)$ is a subsequential limit of $\{\bar{\boldsymbol{\ssp}}^m(\cdot)\}_{m=1}^{\infty},$ then, for each $j \in \J,$ $\langle \pi_x, \bar{\ssp}_j^m(\cdot) \rangle\Rightarrow \langle\pi_x, \bar{\ssp}_j(\cdot) \rangle $.
    \label{lem: pix convergence}
\end{lem}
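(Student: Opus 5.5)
Along the convergent subsequence, $\bar{\boldsymbol{\ssp}}^m(\cdot)\Rightarrow\bar{\boldsymbol{\ssp}}(\cdot)$ in $D([0,\infty),\M^J)$ by Proposition \ref{fluidtightnessprop}, and every limit is continuous. We therefore invoke the Skorokhod representation, so that $\bar{\ssp}^m_j\to\bar{\ssp}_j$ almost surely, uniformly on compacts in the weak topology. The function $\pi_x$ is continuous but unbounded, so weak convergence alone does not give $\langle\pi_x,\bar{\ssp}^m_j(t)\rangle\to\langle\pi_x,\bar{\ssp}_j(t)\rangle$. The plan is to truncate: for $K>0$ let $\phi_K(x,y):=x\wedge K$, which is bounded and continuous. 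We will show (a) for fixed $K$, $\sup_{t\le T}|\langle\phi_K,\bar{\ssp}^m_j(t)\rangle-\langle\phi_K,\bar{\ssp}_j(t)\rangle|\to0$, and (b) a uniform integrability bound
\[
\lim_{K\to\infty}\limsup_{m\to\infty}P\Big(\sup_{t\le T}\langle (x-K)^+,\bar{\ssp}^m_j(t)\rangle>\eta\Big)=0
\]
for every $\eta>0$, together with the analogous statement for the limit. Combining (a) and (b) gives uniform-on-compacts convergence in probability of $\langle\pi_x,\bar{\ssp}^m_j(\cdot)\rangle$ to the continuous process $\langle\pi_x,\bar{\ssp}_j(\cdot)\rangle$, and hence the claimed convergence in distribution.

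Step (a) is standard. Weak convergence uniformly on $[0,T]$, which holds by continuity of the limit, gives uniform convergence of the integrals of the fixed bounded continuous function $\phi_K$. This follows by the usual argument that the map $\xi(\cdot)\mapsto\langle\phi_K,\xi(\cdot)\rangle$ is continuous from $D([0,\infty),\M)$ to $D([0,\infty),\R)$ at continuous paths.

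Step (b) is the main work. By \eqref{statespacedescriptordefequation}, residual service times only decrease and jobs only leave, so pathwise, uniformly in $t\le T$,
\[
\langle (x-K)^+,\bar{\ssp}^m_j(t)\rangle\le \langle (x-K)^+,\bar{\ssp}^m_j(0)\rangle+\frac1m\sum_{i=1}^{m\bar{\ap}^m_j(T)}(\ser_i^j-K)^+ .
\]
For the first term we use Assumption \ref{assumptions}\ref{initialconditionsassumption}: $\langle x^{1+\epsilon},\bar{\ssp}^m_j(0)\rangle$ converges in distribution and is therefore tight, and $(x-K)^+\le x^{1+\epsilon}K^{-\epsilon}$, so this term is small with high probability uniformly in $m$ as $K\to\infty$. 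For the second term we bound it by $\frac1m\sum_{i\le 2\ar_jTm}(\ser_i^j-K)^+$ on the event $\{\bar{\ap}^m_j(T)\le2\ar_jT\}$, whose probability tends to one. By Markov's inequality this has probability of exceeding $\eta$ at most $2\ar_jT\,\mathbb E[(\ser_1^j-K)^+]/\eta$, which tends to zero as $K\to\infty$ because $\ser_1^j$ is integrable.

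For the limit, Fatou/lower semicontinuity applied to $\phi_K\uparrow\pi_x$ shows that $\sup_{t\le T}\langle(x-K)^+,\bar{\ssp}_j(t)\rangle$ obeys the same bound. Continuity of $t\mapsto\langle\pi_x,\bar{\ssp}_j(t)\rangle$ then follows as the uniform limit of the continuous functions $\langle\phi_K,\bar{\ssp}_j(\cdot)\rangle$. The only delicate point is obtaining the tail control uniformly in $t$, and the pathwise monotone domination above handles exactly that.
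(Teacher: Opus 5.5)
Your proof is correct. Its skeleton matches the paper's: a Skorokhod representation, truncation of $\pi_x$ at $x\wedge K$, and tail control through the pathwise domination coming from \eqref{statespacedescriptordefequation}. The difference lies in how you control the tail.

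The paper dominates $\langle x^{1+\epsilon},\bar{\ssp}^m_j(t)\rangle$ by $R^m_j(t)=\langle x^{1+\epsilon},\bar{\ssp}^m_j(0)\rangle+\frac1m\sum_{i\le m\bar{\ap}^m_j(t)}(\ser^j_i)^{1+\epsilon}$. It includes $R^m_j$ in the joint Skorokhod representation and obtains the pathwise bound $2C_T/K^{\epsilon}$ with $C_T=\sup_m R^m_j(T)<\infty$, so convergence holds almost surely on the representation space. That route needs $\mathbb{E}[(\ser^j_1)^{1+\epsilon}]<\infty$. In the fluid section this is available only through the third-moment bound of Assumption \ref{assumptions} \ref{fllnassumption2}, which the paper says it is not using there.

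Your version works with $(x-K)^+$ directly and applies Markov's inequality on the event $\{\bar{\ap}^m_j(T)\le 2\ar_jT\}$. It therefore needs only $\mathbb{E}[\ser^j_1]<\infty$ and yields convergence uniformly on compacts in probability, which suffices for convergence in distribution. You also make explicit two points the paper leaves implicit: the lower-semicontinuity argument giving the tail bound for the limit, and the continuity of $t\mapsto\langle\pi_x,\bar{\ssp}_j(t)\rangle$.

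One point deserves a sentence. Your estimate (b) is proved for the original processes. It is a statement about the law of $\bar{\ssp}^m_j$ alone (a measurable functional of the path), so it transfers to the Skorokhod copies. Alternatively, include the dominating variables in the joint representation, as the paper does.
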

\begin{proof}
Fix $j \in \J.$
By a slight abuse of notation, we will continue to denote the convergent sequence as $\{\bar{\boldsymbol{\ssp}}^m(\cdot)\}_{m=1}^{\infty}.$ 
    Applying \eqref{masstransportequationdef}, we find that, for each $\epsilon >0$ such that
    \begin{align*}
        \langle x^{1+\epsilon}, \bar{\ssp}_j^m(\cdot) \rangle \leq R_j^m(\cdot):= \langle x^{1+\epsilon}, \bar{\ssp}_j^m(0) \rangle+\frac{1}{m}\sum_{i=1}^{m \bar{\ap}^m_j(\cdot)} (\ser_i^j)^{1+\epsilon}.
    \end{align*}
    Applying the functional law of large numbers and using Assumptions \ref{assumptions} \ref{fllnassumption2} and \ref{initialconditionsassumption}, we see that there exists $\epsilon >0$ such that $R_j^m(\cdot)$ converges to $R_j(\cdot):=\langle x^{1+\epsilon}, \bar{\ssp}_j(0) \rangle+\ar_j E[(\ser_i^j)^{1+\epsilon}](\cdot).$
    Taking a Skorokhod representation, we may assume that $(\bar{{\ssp}}^m_j(\cdot),R_j^m(\cdot))\rightarrow (\bar{{\ssp}}_j(\cdot), R_j(\cdot))$
    almost surely.
    Thus, we see that, defining
    $C_T:=\sup_{m\geq 1}R_j^m(T),$
    we have that
    $$\sup_{m\geq 0}\sup_{t \in [0,T]} \langle x^{1+\epsilon}, \bar{\ssp}^m_j(t)\rangle \leq C_T < \infty.$$
  It follows that $\{\bar{\ssp}^m_j(t),\bar{\ssp}_j(t):t \in [0,T],m \geq 1\}$ is a uniformly integrable family of measures.
    Choosing a realization $\omega$ on the almost sure set on which convergence occurs, we see that, defining $\pi_{x,K}:= x \wedge K$ for each $K >0,$ we have
    \begin{align*}
        \sup_{t \in [0,T]}|\langle \pi_x, \bar{\ssp}^m_j(t)\rangle -\langle \pi_x, \bar{\ssp}_j(t)\rangle|& \leq \sup_{t \in [0,T]}|\langle \pi_x, \bar{\ssp}^m_j(t)\rangle -\langle \pi_{x,K}, \bar{\ssp}_j^m(t)\rangle|\\
        &+\sup_{t \in [0,T]}|\langle \pi_{x,K}, \bar{\ssp}^m_j(t)\rangle -\langle \pi_{x,K}, \bar{\ssp}_j(t)\rangle|\\
        &+\sup_{t \in [0,T]}|\langle \pi_{x,K}, \bar{\ssp}_j(t)\rangle -\langle \pi_{x}, \bar{\ssp}_j(t)\rangle|\\
        & \leq 2\frac{C_T}{K^{\epsilon}} + \sup_{t \in [0,T]}|\langle \pi_{x,K}, \bar{\ssp}^m_j(t)\rangle -\langle \pi_{x,K}, \bar{\ssp}_j(t)\rangle|.
    \end{align*}
    Taking the limit first in $m$ and then in $K,$ we find that $\langle \pi_x, \bar{\ssp}^m(\cdot)\rangle\rightarrow \langle \pi_x, \bar{\ssp}(\cdot)\rangle$
    uniformly on the interval $[0,T].$
    \end{proof}

\begin{proof}[Proof of Theorem \ref{thm: fluid limit}]
Let $\bar{\boldsymbol{\ssp}}(\cdot)$ be a subsequential limit of $\{\bar{\boldsymbol{\ssp}}^m(\cdot)\}_{m=1}^{\infty}.$
By a slight abuse of notation, we will use $m,$ rather than $m_k,$ to index the converging subsequence.
Condition (1) follows from Assumption \ref{assumptions} \ref{initialconditionsassumption}.
Condition (2) follows from Lemma \ref{bdeltalemma}.
In particular, for each $\delta >0,$ let $f_{\delta}$ be a bounded continuous function such that $1_{B_{\delta}}\leq f_{\delta} \leq 1_{B_{2 \delta}}.$
Then applying Lemma \ref{bdeltalemma} and the Portmanteau theorem,
\begin{align*}
   &1=\lim_{\delta\rightarrow 0} \liminf_{m\rightarrow \infty } P\{\sup_{t\in [0,T]}\bar{\ssp}_j^m(t)(B_{2\delta})\leq \epsilon\}\\
   & \leq \lim_{\delta\rightarrow 0} \liminf_{m\rightarrow \infty } P\{\sup_{t\in [0,T]}\langle f_{\delta}, \bar{\ssp}_j^m(t) \rangle \leq \epsilon\} \\
   & \leq \lim_{\delta\rightarrow 0}  P\{\sup_{t\in [0,T]}\langle f_{\delta}, \bar{\ssp}_j(t) \rangle \leq \epsilon\}\\
   &\leq \lim_{\delta\rightarrow 0}P\{\sup_{t\in [0,T]}\langle 1_{B_\delta}, \bar{\ssp}_j(t) \rangle \leq \epsilon\}\\
   & \leq P\{\sup_{t\in [0,T]} \bar{\ssp}_j(t)(\partial \R_+^2)  \leq \epsilon\}.
\end{align*}
Since $\epsilon$ can be arbitrarily small, we see that $\sup_{t\in [0,T]}  \bar{\ssp}_j(t)(\partial \R_+^2)=0$ almost surely.
To check condition (3), applying \eqref{masstransportequationdef}, we see that the workload process for class $j,$ $\langle \pi_x, \bar{\ssp}^m_j(\cdot)\rangle,$ is differentiable and decreases at rate $\frac{p_j\bar{\tm}_j^m(s)}{\sum_{i=1}^J p_i\bar{\tm}_i^m(s)}$ on intervals between arrival times, service completion times, and reneging times where there are jobs in the $j$th queue.
At arrival times for class $j,$ the workload increases by $v_i^j/m.$
At reneging times, it decreases by the work remaining on the job that reneged.
Therefore, we may write a difference equation for the workload process $\langle \pi_x, \bar{\ssp}^m_j(\cdot)\rangle:$
\begin{align*}
    \langle \pi_x, \bar{\ssp}^m_j(t)\rangle &= \langle \pi_x, \bar{\ssp}^m_j(s)\rangle- \int_s^t 1_{\{\bar{\boldsymbol{\tm}}^m(r)\neq \boldsymbol{0}\}}\frac{p_j \bar{\tm}^m_j(r)}{\sum_{i=1}^Jp_i \bar{\tm}^m_i(r)}dr + \frac{1}{m}\sum_{i=m\bar{\ap}_j^m(s)+1}^{m\bar{\ap}_j^m(t)}\ser_i^j\\&-\frac{1}{m}\sum_{i=1}^{m\bar{\ap}_j^m(t)}(\ser_i^j-p_j\bar{G}^m(\pat_i^j+\iinta_i^{j,m}/m)+p_j\bar{G}^m(\iinta_i^{j,m}/m))^+1_{\{s<\pat_i^j+\iinta_i^{j,m}/m\leq t\}}
    \\&-\frac{1}{m}\sum_{i=1}^{m\bar{\tm}_j^m(0)}(\tilde{\ser}_i^j-p_j\bar{G}^m(\tilde{\pat}_i^j))^+1_{\{\tilde{\pat}_i^j\in (s,t]\}}
    \numberthis
    \label{workloaddiffeq}
\end{align*}
We note also that, applying \eqref{masstransportequationdef} directly, we have that
\begin{align*}
    \langle \pi_x, \bar{\ssp}^m_j(s)\rangle &= \frac{1}{m}\sum_{i=1}^{m\bar{\ap}_j^m(s)}(\ser_i^j-p_j\bar{G}^m(s)+p_j\bar{G}^m(\iinta_i^{j,m}/m))^+1_{\{s<\pat_i^j+\iinta_i^{j,m}/m\}}
    \\&+\frac{1}{m}\sum_{i=1}^{m\bar{\tm}_j^m(0)}(\tilde{\ser}_i^j-p_j\bar{G}^m(s))^+1_{\{\tilde{\pat}_i^j>s\}}
    \numberthis
    \label{workloaddiffeq2}
\end{align*}
Combining \eqref{workloaddiffeq} and \eqref{workloaddiffeq2}, we see that 
\begin{align*}
     \sum_{j=1}^J \langle \pi_x,\bar{\ssp}^m_j(t)\rangle &\geq - (t-s)+\frac{1}{m}\sum_{j=1}^J\sum_{i=m\bar{\ap}_j^m(s)+1}^{m\bar{\ap}_j^m(t)}\ser_i^j1_{\{\pat_i^j+\iinta_i^{j,m}/m> t\}}
     \numberthis
    \label{lowerboundoffluidmodel}
\end{align*}
We now claim that the right hand side of the inequality above converges in distribution to
\begin{equation}
 - (t-s)+\sum_{j=1}^J\int_s^t \ar_j \langle \pi_x 1_{(0,\infty)\times (t-r,\infty)},\pd_j\rangle dr.
\end{equation}
Indeed, the convergence of the second term to the desired form follows from the functional law of large numbers combined with a real analysis argument
For more details on the proof of convergence to the above expression, see the proof of Lemma 8.2 in \cite{loeserwilliams}.
Noticing that the load parameter, which we have assumed in Assumptions \ref{assumptions} is greater than $1,$ can be re-written
$$1<\rho = \sum_{j=1}^J \ar_j \langle \pi_x,\pd_j\rangle . $$
By taking a Skorokhod representation that includes both sides of \eqref{lowerboundoffluidmodel} as well as each side evaluated at each rational pair $(s,t) \in [0,T],$ we may say that, almost surely, for all $0 \leq s \leq t$
\begin{align*}
   \sum_{j=1}^J\langle \pi_x,\bar{\ssp}_j(t)\rangle&\geq   (\rho-1)(t-s)-\int_s^t \sum_{j=1}^J \ar_j \langle \pi_x1_{(0,\infty)\times [0,t-r]},\pd_j\rangle dr
\end{align*}
By an abuse of notation, we denote all quantities with the same variable names.
Because we are ultimately only interested in the distributional properties of the limit $\bar{\ssp}_j(\cdot),$ this step is valid.
Then, using Assumptions \ref{assumptions} \ref{basicassumptions}, particularly the fact that $\pd_j([0,\infty)\times \{0\})=0$ and $\langle \pi_x, \pd_j\rangle < \infty,$ we choose $\epsilon$ such that $ \sum_{j=1}^J \ar_j \langle \pi_x 1_{(0,\infty)\times [0,\epsilon]},\pd_j\rangle \leq \frac{\rho-1}{2}$, we see that for $t\leq s+\epsilon$,
\begin{equation}
    \sum_{j=1}^J\langle \pi_x,\bar{\ssp}_j(t)\rangle\geq  (\rho-1)(t-s)/2.
\end{equation}
It follows that $\bar{\boldsymbol{\ssp}}(\cdot)$ is nonzero on $(s,s+\epsilon]$.
Because $s$ is arbitrary, this is sufficient to check the third condition.

In order to check (4), one can simply take the limit of \eqref{differentialmasstransport} term-by-term using $g(t,\cdot,\cdot):= f(\cdot,\cdot).$
The first term on the right hand side converges by Assumption \ref{assumptions} \ref{initialconditionsassumption}. The second two terms converge uniformly on compacts by a standard real analysis argument using a Skorokhod representation, continuity of the derivatives of $f,$ the fact that $1_{\{\bar{\boldsymbol{\tm}}^m(s) \neq \boldsymbol{0}\}} \frac{p_j}{\wmass(\bar{\boldsymbol{\tm}}^m(s))}$ converges to a continuous function because $\wmass(\bar{\boldsymbol{\tm}}^m(s))$ is eventually bounded away from $0$, and bounded convergence (for a detailed version of this argument, see the proof of Lemma 9.5 in \cite{loeserwilliams}).
The last term converges to $\ar_j t\langle f, \pd_j\rangle$ by the functional law of large numbers (see, e.g., Lemma A.2 in \cite{gromollpuhawilliams}).

\end{proof}
\section{Proofs of Diffusion Limit Results}
\label{sec: diffusion proofs}
For the remainder of this section, we will assume that we are working with a sequence of diffusion-scaled models $\{\hat{\ssp}^m_j(\cdot) \}_{m=1}^{\infty}$ that satisfy Assumption \ref{assumptions}. The further assumption that for $j \in \J,$ $\fl_{0,j},\pd_j$ have densities $h_j,g_j \in C_b^2(\R_+^2)$, respectively, and CDFs in $C_b^1(\R_+^2)$ as well will be assumed from this point forward.
We would like to use the results in \cite{loeser2025diffusionlimitsmeasurevaluedqueueing}, so we decompose \eqref{diffusiondefinitioneqn} into a sequence of good renewal driven systems.
We use the martingale decomposition in \cite{loeser2025diffusionlimitsmeasurevaluedqueueing}, Proposition 4.1, in order to decompose the term $$\sum_{i=1}^{\ap^m_j(t)} \varphi(\iinta_i^{j,m},\ser_i^{j,m},\pat_i^{j,m})$$ of \eqref{differentialmasstransport}.
Then, for a bounded measurable function
$\varphi:\R_+^3\rightarrow\R$, define the averaged term
\begin{align}
\avg_{\varphi}^{\ap_j,j,m}(t)
&:=
\int_0^t
\left\langle
\varphi(s,\cdot,\cdot),\pd_j
\right\rangle
d\ap_j^m(s),
\end{align}
and the martingale term
\begin{align}
\mart_{\varphi}^{\ap_j,j,m}(t)
&:=
\sum_{i=1}^{\ap_j^m(t)}
\bigg(
\varphi(\iinta_i^{j,m},\ser_i^{j,m},\pat_i^{j,m})
-
\left\langle
\varphi(\iinta_i^{j,m},\cdot,\cdot),\pd_j
\right\rangle
\bigg).
\end{align}
We will also use the following state-dependent extension. For a
bounded measurable function
$\psi:\R_+^4\rightarrow\R$, define
\begin{align}
\othermart_{\psi}^{\ap_j,j,m}(t)
:=
\sum_{i=1}^{\ap_j^m(t)}
\bigg(
&
\psi\left(
\iinta_i^{j,m},
\ser_i^{j,m},
\pat_i^{j,m},
G^m(\iinta_i^{j,m}-)
\right)
\\
&-
\left\langle
\psi\left(
\iinta_i^{j,m},
\cdot,
\cdot,
G^m(\iinta_i^{j,m}-)
\right),
\pd_j
\right\rangle
\bigg).
\end{align}
By Proposition~4.1 of
\cite{loeser2025diffusionlimitsmeasurevaluedqueueing},
$\mart_{\varphi}^{\ap_j,j,m}(\cdot)$ is a martingale whenever
$\varphi:\R_+^3\to\R$ is bounded and measurable. The same argument
shows that
$\othermart_{\psi}^{\ap_j,j,m}(\cdot)$ is a martingale whenever
$\psi:\R_+^4\to\R$ is bounded and measurable.

Following the steps in \cite{loeser2025diffusionlimitsmeasurevaluedqueueing} \S 4.1 for the equations \eqref{differentialmasstransport} and \eqref{fluidtransportequationdifferential} let $f \in C_b^1(\R_+^3)$ such that, for each $t\geq 0,$ the function $f(t,\cdot,\cdot):\R_+^2\rightarrow \R$ as well as the first partials $f_2,f_3$ are zero on $\partial \R_+^2.$ Then almost surely for $t \geq 0, j \in \J$
\begin{align*}
    \langle f(t,\cdot,\cdot),\hat{\ssp}_j^m(t) \rangle &= \langle f(0,\cdot,\cdot),\hat{\ssp}_j^m(0) \rangle+\int_0^t\langle f_1(s,\cdot,\cdot), \hat{\ssp}^m_j(s) \rangle ds- \int_0^t \langle f_3(s,\cdot,\cdot),\hat{\ssp}_j^m(s)\rangle ds\\&- \int_0^tp_j\left(\frac{\langle f_2(s,\cdot,\cdot), \hat{\ssp}_j^m(s) \rangle}{\wmass(\boldsymbol{\flm}(s))}-\frac{\langle f_2(s,\cdot,\cdot), \bar{\ssp}_j^m(s) \rangle}{\wmass(\bar{\boldsymbol{\tm}}^m(s))\wmass(\boldsymbol{\flm}(s))} \wmass(\hat{\boldsymbol{\tm}}^m(s)) \right)  ds
    \\&+\int_0^t \langle f(s,\cdot,\cdot), \pd_j \rangle d\hat{\ap}^m_j(s) +\hat{\mart}^{\ap_j,j,m}_f(t).
    \numberthis
    \label{renewaldrivenequation}
\end{align*}
where the diffusion-scaling of
${\mart}^{\ap_j,j,m}_{\varphi}(t)$ is
\begin{align*}
    \hat{\mart}^{\ap_j,j,m}_{\varphi}(t)&:=\frac{1}{\sqrt{m}}\sum_{i=1}^{m\bar{\ap}_j^m(t)}\bigg(\varphi(\iinta_i^{j,m}/m,\ser_i^{j,m},\pat_i^{j,m})\\&\hspace{10mm}- \langle \varphi(\iinta_i^{j,m}/m,\cdot,\cdot),\pd_j\rangle\bigg),
    \numberthis
    \label{eq: diffusion scaled Y}
\end{align*}

and the diffusion-scaling of
$\othermart^{\ap_j,j,m}_{\psi}(t)$ is
\begin{align*}
    \hat{\othermart}^{\ap_j,j,m}_{\psi}(t)&:=\frac{1}{\sqrt{m}}\sum_{i=1}^{m\bar{\ap}_j^m(t)}\bigg(\psi(\iinta_i^{j,m}/m,\ser_i^{j,m},\pat_i^{j,m},\bar{G}^m(\iinta_i^{j,m}/m-))\\&\hspace{10mm}- \langle \psi(\iinta_i^{j,m}/m,\cdot,\cdot,\bar{G}^m(\iinta_i^{j,m}/m-)),\pd_j\rangle\bigg).
    \numberthis
    \label{eq: diffusion scaled Y tilde}
\end{align*}

\begin{rem}
    We note that we have omitted the indicator term $1_{\{\bar{\boldsymbol{\tm}}^m(s) \neq \boldsymbol{0}\}}$ in the decomposition above because we may (and do) take $\wmass(\bar{\boldsymbol{\tm}}^m(\cdot))$ on the compact interval $[0,T]$ to be bounded below by a constant $C_T>0$ that depends only on the value of $\boldsymbol{\fl}_0,T,$ and the model parameters for large $m$ (see Remark 5.1 of \cite{loeser2025diffusionlimitsmeasurevaluedqueueing}). More precisely, for each $T>0$ there exist events $\Omega_T^m$
with $P(\Omega_T^m)\to 1$ on which
$\wmass(\bar{\boldsymbol{\tm}}^m(\cdot))$ is uniformly bounded away
from zero on $[0,T]$. Thus, throughout the arguments below, equations
in which the corresponding nonemptiness indicators have been omitted
are understood on $\Omega_T^m$; this does not affect any convergence
in distribution result.
    \label{nonzerorem}
\end{rem}

Alternatively, one can compare \eqref{masstransportequationdef} and \eqref{fluidtransporteqn} and do the same decomposition.
However, this will require us to develop a mass-transported representation of the
martingale term, which we will do through the state-dependent martingale field
\[
\left\{
\hat{\othermart}^{\ap_j,j,m}_{g_{u,s}}(\cdot):
(u,s)\in\R_+^2
\right\}.
\]
   Because this is key, and a little bit delicate, we spend some time now defining this mass-transported field, which we will denote $ \{\Phi_{f,t}^{j,m}(r): 0\leq r \leq t, m \in \N\}$.
   We note that, because the mass-transport will be evaluated at a random endpoint $(p_j\bar{G}^m(t),t)$, we are not asserting the martingale property for $ \Phi_{f,t}^{j,m}(r).$ 
Rather, we will use the martingale property of each $\hat{\othermart}^{\ap_j,j,m}_{g_{u,s}}(\cdot),$ a C-tightness argument, and the fact that $\bar{G}^m(\cdot)$ is asymptotically deterministic in order to characterize our limiting martingale field.
In particular, almost surely, for $f \in \mathbf{C}_b^1(\R_+^2),$ $t \geq 0,$ 
\begin{align*}
    \langle f,\hat{\ssp}^m_j(t)\rangle & = U_f^{j,m}(t)  +\sqrt{m}\langle t_{p_j\bar{G}^m(t),t}f-t_{p_j\bar{G}(t),t}f, \fl_{0,j}\rangle 
     \\&+\int_0^t\langle \sqrt{m}(t_{p_j\bar{G}^m(s,t),t-s}f-t_{p_j\bar{G}(s,t),t-s}f), \pd_j\rangle d\bar{\ap}^m_j(s)
    \numberthis ,\label{mainMeq}
\end{align*}
where, defining, for $0 \leq r \leq t,$ 
\begin{align*}
    \Phi_{f,t}^{j,m}(r)&:=\hat{\othermart}^{\ap_j,j,m}_{g_{p_j\bar{G}^m(t),t}}(r),
\end{align*}
and we are evaluating a function $g_{u,s}(w,x,y,z):=t_{(u-p_jz)^+,(s-w)^+}f(x,y)$ at the (time, incoming noise, system state) vector $(\iinta_i^{j,m}/m, \ser_i^{j,m},\pat_i^{j,m},\bar{G}^m(\iinta_i^{j,m}/m-))$ with a random spatial shift $(u,s)=(p_j\bar{G}^m(t),t).$
\begin{align*}
    U_f^{j,m}(t) &= F_{f}^{j,c,m}(p_j\bar{G}^m(t),t) +\Phi_{f,t}^{j,m}(t)
     +\int_0^t\langle t_{p_j\bar{G}(s,t),t-s}f, \pd_j \rangle d\hat{\ap}_j^m(s).
     \numberthis \label{Udef}
    \end{align*}
Here, $F_{f}^{j,c,m}$ is as given in Assumptions \ref{assumptions} \ref{initialconditionsassumption}.


The rest of this section will proceed as follows. Examining equation \eqref{mainMeq}, we notice that it looks quite similar to the limiting diffusion equation \eqref{rjlimitdef}.
Thus, to prove theorem \ref{lhatconvergencethm}, we will use a compactness-uniqueness type argument.
We will first reduce tightness of
$\langle f,\hat{\ssp}_j^m(\cdot)\rangle$, for a given
$f\in\mathscr S$ and $j\in\J$, to tightness of
$\{U_f^{j,m}(\cdot)\}_{m=1}^{\infty}$.
We will then prove convergence of $\{U_f^{j,m}(\cdot)\}_{m=1}^{\infty}$ to the process given in Definition \ref{Udefdef}, which proves Theorem \ref{tightnessresult}.
We then prove convergence of the right hand side of \eqref{mainMeq} to the right hand side of \eqref{rjlimitdef} term-by-term.
The convergence result (Theorem \ref{lhatconvergencethm}) then follows from uniqueness of solutions for mass transport diffusion model solutions (Theorem \ref{Llimit}), which is proved next.
\subsection{Proof of Tightness}
Before proving tightness, we prove the following Lemma.
It is purely computational, but it is the key computation that makes the subsequent bounds clear.

\begin{lem}
\label{lem: weird calculation}
    Let $\mu\in\boldsymbol{M}$, and define
    \[
        F^\mu(x,t)
        :=
        \mu\bigl([0,x]\times(t,\infty)\bigr),
        \qquad x,t\geq0.
    \]
    Suppose that
    $F^\mu\in C_b^2(\R_+^2)$. Let $a\geq0$, and let
    $H_1,H_2:[a,\infty)\to\R_+$ be increasing functions.
    Then, for every $a\leq r<s$ and $t\geq0$,
    \begin{align*}
        &
        \Big|
        \left\langle
        1_{(H_1(s),\infty)\times(t,\infty)},\mu
        \right\rangle
        -
        \left\langle
        1_{(H_2(s),\infty)\times(t,\infty)},\mu
        \right\rangle
        \\
        &\qquad
        -
        \left(
        \left\langle
        1_{(H_1(r),\infty)\times(t,\infty)},\mu
        \right\rangle
        -
        \left\langle
        1_{(H_2(r),\infty)\times(t,\infty)},\mu
        \right\rangle
        \right)
        \Big|
        \\
        &\leq
        \left|
        H_1(s)-H_1(r)
        -
        \bigl(H_2(s)-H_2(r)\bigr)
        \right|
        \left|F_1^\mu(H_1(r),t)\right|
        \\
        &\quad+
        \left\|F_{11}^\mu\right\|_\infty
        |H_1(r)-H_2(r)|^2
        \\
        &\quad+
        \left\|F_{11}^\mu\right\|_\infty
        |H_1(s)-H_2(s)|
        \left(
        H_1(s)-H_1(r)
        +
        |H_1(s)-H_2(s)|
        \right).
    \end{align*}
\end{lem}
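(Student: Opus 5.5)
The plan is to reduce everything to the function $\phi(x):=F^\mu(x,t)$ with $t$ fixed. Since $\langle 1_{(x,\infty)\times(t,\infty)},\mu\rangle=\mu(\R_+\times(t,\infty))-F^\mu(x,t)$, the constant terms cancel in each difference. Writing $a_1=H_1(r)$, $b_1=H_1(s)$, $a_2=H_2(r)$, $b_2=H_2(s)$, the left-hand side equals
\[
\bigl|\phi(b_2)-\phi(b_1)-\phi(a_2)+\phi(a_1)\bigr|,
\]
with $\phi\in C^2_b$, $\phi'=F^\mu_1(\cdot,t)$ and $\phi''=F^\mu_{11}(\cdot,t)$. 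The lemma is then a second-order Taylor estimate for this second difference.

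The key step is to Taylor expand each of the three values $\phi(b_1),\phi(a_2),\phi(b_2)$ around the single base point $a_1$ to first order, with Lagrange (or integral) remainders:
\[
\phi(z)=\phi(a_1)+\phi'(a_1)(z-a_1)+R(z),\qquad |R(z)|\le \tfrac12\|\phi''\|_\infty|z-a_1|^2 .
\]
The linear parts combine to $\phi'(a_1)\bigl[(b_2-a_1)-(b_1-a_1)-(a_2-a_1)\bigr]=-\phi'(a_1)\bigl[(b_1-a_1)-(b_2-a_2)\bigr]$. This gives exactly the first term $|H_1(s)-H_1(r)-(H_2(s)-H_2(r))|\,|F^\mu_1(H_1(r),t)|$.

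The remaining work is to bound $R(b_2)-R(b_1)-R(a_2)$ by the last two terms. The term $R(a_2)$ is at most $\tfrac12\|F_{11}^\mu\|\,|H_1(r)-H_2(r)|^2$, which is within the second term. For $R(b_2)-R(b_1)$, crude separate bounds would give $|b_1-a_1|^2$ terms that do not appear in the claim, so I would instead write
\[
R(b_2)-R(b_1)=\int_{b_1}^{b_2}\bigl(\phi'(u)-\phi'(a_1)\bigr)\,du .
\]
Using $|\phi'(u)-\phi'(a_1)|\le\|\phi''\|\,|u-a_1|\le \|\phi''\|(|b_1-a_1|+|b_2-b_1|)$ for $u$ between $b_1$ and $b_2$, this is at most
\[
\|F_{11}^\mu\|\,|H_1(s)-H_2(s)|\bigl(H_1(s)-H_1(r)+|H_1(s)-H_2(s)|\bigr),
\]
which is the third term. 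Monotonicity of $H_1$ is used only to write $|b_1-a_1|=H_1(s)-H_1(r)$.

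The main obstacle is precisely this grouping. The remainders at $b_1$ and $b_2$ must be paired into one integral, rather than bounded individually, so that only the mixed product appears and no pure $(H_1(s)-H_1(r))^2$ term. The remaining care is bookkeeping of constants: the factor $\tfrac12$ from $R(a_2)$ is absorbed since the stated constant is $\|F_{11}^\mu\|_\infty$. Boundedness of $F^\mu_{11}$ on $\R_+^2$ makes the estimates uniform in $t$.
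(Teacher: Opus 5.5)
Your proposal is correct and is essentially the paper's argument: the paper also rewrites each difference as $F^\mu(H_2(u),t)-F^\mu(H_1(u),t)$ and applies the mean value theorem on the segment between $H_1(u)$ and $H_2(u)$ for $u=r,s$. It then compares the resulting derivatives with the single anchor $F_1^\mu(H_1(r),t)$, which is exactly your expansion about $a_1$ with the remainders at $b_1,b_2$ treated jointly. Your integral form of the $s$-remainder simply replaces the paper's intermediate point $\xi(s)$, and your Taylor remainder at $a_2$ gives the sharper constant $\tfrac12\|F_{11}^\mu\|_\infty$ on the $|H_1(r)-H_2(r)|^2$ term.
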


\begin{proof}
    The proof is a simple calculation. For $u\geq a$, we have
    \begin{align*}
        &
        \left\langle
        1_{(H_1(u),\infty)\times(t,\infty)},\mu
        \right\rangle
        -
        \left\langle
        1_{(H_2(u),\infty)\times(t,\infty)},\mu
        \right\rangle
        \\
        &=
        F^\mu(H_2(u),t)-F^\mu(H_1(u),t)
        \\
        &=
        sgn\bigl(H_2(u)-H_1(u)\bigr)
        \Big(
        F^\mu\bigl(\max\{H_1(u),H_2(u)\},t\bigr)
        \\
        &\hspace{45mm}
        -
        F^\mu\bigl(\min\{H_1(u),H_2(u)\},t\bigr)
        \Big).
    \end{align*}
    Therefore, by the mean value theorem, there exist
    $\xi(r)$ and $\xi(s)$ such that
    \[
        H_1(r)+\xi(r)
        \in
        \left[
        \min\{H_1(r),H_2(r)\},
        \max\{H_1(r),H_2(r)\}
        \right]
    \]
    and
    \[
        H_1(s)+\xi(s)
        \in
        \left[
        \min\{H_1(s),H_2(s)\},
        \max\{H_1(s),H_2(s)\}
        \right],
    \]
    and the expression on the left-hand side of the desired
    inequality equals
    \begin{align*}
        \Big|
        &(H_2(s)-H_1(s))
        F_1^\mu(H_1(s)+\xi(s),t)
        \\
        &-
        (H_2(r)-H_1(r))
        F_1^\mu(H_1(r)+\xi(r),t)
        \Big|.
    \end{align*}
    Adding and subtracting the appropriate terms and applying the
    triangle inequality, this is bounded above by
    \begin{align*}
        &
        \left|
        (H_2(s)-H_1(s))
        -
        (H_2(r)-H_1(r))
        \right|
        \left|F_1^\mu(H_1(r),t)\right|
        \\
        &\quad+
        |H_2(r)-H_1(r)|
        \left|
        F_1^\mu(H_1(r)+\xi(r),t)
        -
        F_1^\mu(H_1(r),t)
        \right|
        \\
        &\quad+
        |H_2(s)-H_1(s)|
        \left|
        F_1^\mu(H_1(s)+\xi(s),t)
        -
        F_1^\mu(H_1(r),t)
        \right|.
    \end{align*}
    By the choice of $\xi(r)$ and $\xi(s)$,
    \[
        |\xi(r)|
        \leq
        |H_1(r)-H_2(r)|
    \]
    and, since $H_1$ is increasing, by the triangle inequality
    \[
        |H_1(s)+\xi(s)-H_1(r)|
        \leq
        H_1(s)-H_1(r)
        +
        |H_1(s)-H_2(s)|.
    \]
    Since $F_1^\mu(\cdot,t)$ is Lipschitz with Lipschitz
    constant $\|F_{11}^\mu\|_\infty$, it follows that
    \begin{align*}
        &
        \left|
        F_1^\mu(H_1(r)+\xi(r),t)
        -
        F_1^\mu(H_1(r),t)
        \right|
        \\
        &\leq
        \left\|F_{11}^\mu\right\|_\infty
        |H_1(r)-H_2(r)|
    \end{align*}
    and
    \begin{align*}
        &
        \left|
        F_1^\mu(H_1(s)+\xi(s),t)
        -
        F_1^\mu(H_1(r),t)
        \right|
        \\
        &\leq
        \left\|F_{11}^\mu\right\|_\infty
        \left(
        H_1(s)-H_1(r)
        +
        |H_1(s)-H_2(s)|
        \right).
    \end{align*}
    Substituting these bounds into the preceding inequality
    completes the proof.
\end{proof}

\begin{lem}
\label{componenttightnesslemtm}
    Let
    $\{\hat{\boldsymbol{\ssp}}^m(\cdot)\}_{m=1}^{\infty}$
    be a sequence of diffusion-scaled models satisfying
    Assumption~\ref{assumptions}. Suppose that, for every
    $j\in\J$,
    \[
        \{U_1^{j,m}(\cdot)\}_{m=1}^{\infty}
    \]
    is C-tight. Then
    \[
        \left\{
        \left(
        \wmass(\hat{\boldsymbol{\tm}}^m(\cdot)),
        \hat G^m(\cdot)
        \right)
        \right\}_{m=1}^{\infty}
    \]
    is C-tight in $D([0,\infty),\R^2)$.
\end{lem}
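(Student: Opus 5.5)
Take $f=1$ in \eqref{mainMeq}, multiply by $p_j$ and sum over $j$. This gives the prelimit analogue of \eqref{LGsystem2}:
\[
\wmass(\hat{\boldsymbol{\tm}}^m(t))=\sum_{j=1}^J p_j\Bigl(U_1^{j,m}(t)+R_0^{j,m}(t)+R_{\ap}^{j,m}(t)\Bigr),
\]
with
\[
R_0^{j,m}(t)=\sqrt m\bigl(F^{\fl_{0,j}}(p_j\bar G(t),t)-F^{\fl_{0,j}}(p_j\bar G^m(t),t)\bigr),
\]
\[
R_{\ap}^{j,m}(t)=\int_0^t\sqrt m\bigl(F^{\pd_j}(p_j\bar G(s,t),t-s)-F^{\pd_j}(p_j\bar G^m(s,t),t-s)\bigr)\,d\bar\ap_j^m(s).
\]
Here I use $\langle t_{u,v}1,\mu\rangle=\mu((u,\infty)\times(v,\infty))$, so the differences are differences of y-tailed CDFs. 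The diffusion-scaled service potential is $\hat G^m=\sqrt m(\bar G^m-\bar G)$. By definition,
\[
\hat G^m(t)=-\int_0^t\frac{\wmass(\hat{\boldsymbol{\tm}}^m(u))}{\wmass(\bar{\boldsymbol{\tm}}^m(u))\,\wmass(\boldsymbol{\flm}(u))}\,du.
\]
By Remark \ref{nonzerorem} and the fluid limit, the denominator stays in $[c_T,C_T]$ on $\Omega_T^m$. This gives the two-sided bound
\[
|\hat G^m(s,t)|\le C\int_s^t|\wmass(\hat{\boldsymbol{\tm}}^m(u))|\,du .
\]
Hence C-tightness of $\hat G^m$ follows once $\|\wmass(\hat{\boldsymbol{\tm}}^m)\|_T$ is tight, and then everything reduces to controlling $\wmass(\hat{\boldsymbol{\tm}}^m)$.

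\textbf{Step 1: a Gronwall bound.} Since $\bar G^m,\bar G$ are Lipschitz with constant at most $c_T^{-1}$, the mean value theorem gives
\[
|R_0^{j,m}(t)|\le p_j\|F_1^{\fl_{0,j}}\|\,|\hat G^m(t)|,\qquad
|R_{\ap}^{j,m}(t)|\le p_j\|F_1^{\pd_j}\|\,\bar\ap_j^m(T)\sup_{s\le t}|\hat G^m(s,t)|.
\]
Combined with the integral bound on $\hat G^m$, this yields
\[
\|\wmass(\hat{\boldsymbol{\tm}}^m)\|_t\le \sum_j p_j\|U_1^{j,m}\|_T+K\bigl(1+\bar\ap^m(T)\bigr)\int_0^t\|\wmass(\hat{\boldsymbol{\tm}}^m)\|_u\,du .
\]
Gronwall then bounds $\|\wmass(\hat{\boldsymbol{\tm}}^m)\|_T$ by a tight random variable, using tightness of $\|U_1^{j,m}\|_T$ (from C-tightness) and of $\bar\ap^m(T)$. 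Consequently $\hat G^m$ is uniformly Lipschitz on $[0,T]$ with a tight constant, so $\{\hat G^m\}$ is C-tight.

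\textbf{Step 2: oscillations of $\wmass(\hat{\boldsymbol{\tm}}^m)$.} This is the main obstacle, and the place where Lemma \ref{lem: weird calculation} enters. I compare $t$ and $t+h$ with $h\le\delta$ and apply the lemma to $\mu=\fl_{0,j}$ with $H_1=p_j\bar G$, $H_2=p_j\bar G^m$. The first-order term is $|\hat G^m(t+h)-\hat G^m(t)|/\sqrt m$ times a bounded constant; after multiplying by $\sqrt m$ it is $O(h)$ times the tight Lipschitz constant from Step 1. The quadratic terms are $\sqrt m\,|\bar G^m-\bar G|^2=|\hat G^m|^2/\sqrt m\to0$ and $|\hat G^m|\,h$. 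The change in the $t$-argument ($t$ versus $t+h$) is handled by $\|F_2\|h\,|\hat G^m|$, using $F\in C_b^2$.

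The same treatment applies to $R_{\ap}^{j,m}$ inside the integrand. I also need to control the contribution of $\int_t^{t+h}$, which is bounded by $C|\hat G^m|\,(\bar\ap_j^m(t+h)-\bar\ap_j^m(t))$, and the change of the integrand in $t$ for fixed $s$, which is handled as above via the lemma with $r,s$ replaced by the two endpoints. The jumps of $\bar\ap^m$ are $1/m$, so their effect vanishes. Altogether,
\[
w_T\bigl(\wmass(\hat{\boldsymbol{\tm}}^m),\delta\bigr)\le \sum_j p_j\,w_T(U_1^{j,m},\delta)+\delta\,\Xi^m+o_P(1),
\]
where $w_T$ is the modulus of continuity on $[0,T]$ and $\Xi^m$ is tight. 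Together with C-tightness of $U_1^{j,m}$ and compact containment from Step 1, this gives C-tightness of $\wmass(\hat{\boldsymbol{\tm}}^m)$, hence of the pair, by the standard criterion in $D([0,\infty),\R^2)$.
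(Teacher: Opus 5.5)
Your proposal is correct and follows the paper's proof essentially step for step. You use the same $f=1$ representation \eqref{Leq}, and the same bound $|\hat G^m(r,t)|\le C_T\int_r^t|\wmass(\hat{\boldsymbol{\tm}}^m(u))|\,du$ on $\Omega_T^m$ to feed a Gr\"onwall argument for compact containment; you take a running supremum where the paper uses Fubini. You also obtain the same oscillation estimate, via Lemma~\ref{lem: weird calculation} for the first-coordinate change plus separate bounds for the second-coordinate change and the new-arrivals strip.

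One small slip: the second-coordinate change should be bounded by $\|F_{12}\|_\infty\,h\,p_j|\hat G^m|$, using the mixed partial as the paper does, not $\|F_2\|\,h\,|\hat G^m|$. A bound on $F_2$ alone does not produce the factor $|\hat G^m|$ after multiplying by $\sqrt m$.
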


\begin{proof}
    Fix $T>0$. We begin by proving compact containment of
    \[
        \left\{
        \wmass(\hat{\boldsymbol{\tm}}^m(\cdot))
        \right\}_{m=1}^{\infty}.
    \]
    For $0\leq r\leq t$, write
    \[
        \bar G^m(r,t):=\bar G^m(t)-\bar G^m(r),
        \qquad
        \bar G(r,t):=\bar G(t)-\bar G(r),
    \]
    and
    \[
        \hat G^m(r,t):=\hat G^m(t)-\hat G^m(r).
    \]
    For $t\geq0$, define
    \begin{align*}
        I_j^m(t)
        &:=
        \sqrt{m}
        \left\langle
        1_{(p_j\bar G^m(0,t),\infty)\times(t,\infty)}
        -
        1_{(p_j\bar G(0,t),\infty)\times(t,\infty)},
        \fl_{0,j}
        \right\rangle,
    \end{align*}
    and, for $0\leq r\leq t$, define
    \begin{align*}
        R_j^m(r,t)
        &:=
        \sqrt{m}
        \left\langle
        1_{(p_j\bar G^m(r,t),\infty)\times(t-r,\infty)}
        -
        1_{(p_j\bar G(r,t),\infty)\times(t-r,\infty)},
        \pd_j
        \right\rangle.
    \end{align*}

    Applying \eqref{mainMeq} and \eqref{Udef} with $f=1$, we
    obtain, for $t\geq0$,
    \begin{align}
        \wmass(\hat{\boldsymbol{\tm}}^m(t))
        &=
        \sum_{j=1}^J p_jU_1^{j,m}(t)
        +
        \sum_{j=1}^Jp_jI_j^m(t)
        +
        \sum_{j=1}^Jp_j
        \int_0^tR_j^m(r,t)\,d\bar{\ap}_j^m(r).
        \label{Leq}
    \end{align}

    By the mean value theorem and the boundedness of the first
    partial derivatives of the joint CDFs,
    \begin{align*}
        |I_j^m(t)|
        &\leq
        p_j
        \left\|F_1^{\fl_{0,j}}\right\|_\infty
        |\hat G^m(0,t)|,
        \numberthis   \label{Imvtbound}
        \\
        |R_j^m(r,t)|
        &\leq
        p_j
        \left\|F_1^{\pd_j}\right\|_\infty
        |\hat G^m(r,t)|.
         \numberthis \label{Rmvtbound}
    \end{align*}
    Moreover,
    \begin{align*}
        |\hat G^m(r,t)|
        &\leq
        \sqrt{m}
        \int_r^t
        \left|
        \frac{1}{\wmass(\bar{\boldsymbol{\tm}}^m(u))}
        -
        \frac{1}{\wmass(\boldsymbol{\flm}(u))}
        \right|du
        \\
        &=
        \int_r^t
        \frac{
        |\wmass(\hat{\boldsymbol{\tm}}^m(u))|
        }
        {
        \wmass(\bar{\boldsymbol{\tm}}^m(u))
        \wmass(\boldsymbol{\flm}(u))
        }
        \,du. 
    \end{align*}
    By Remark~\ref{nonzerorem}, there exist a deterministic constant
$c_T>0$ and events $\Omega_T^m$ such that
\[
    P(\Omega_T^m)\longrightarrow1
\]
and, on $\Omega_T^m$,
\[
    \inf_{u\in[0,T]}
    \wmass(\bar{\boldsymbol{\tm}}^m(u))
    \wmass(\boldsymbol{\flm}(u))
    \geq c_T.
\]
Consequently, on $\Omega_T^m$, for every
$0\leq r\leq t\leq T$,
\begin{equation}
    |\hat G^m(r,t)|
    \leq
    C_T\int_r^t
    |\wmass(\hat{\boldsymbol{\tm}}^m(u))|\,du,
    \qquad C_T:=c_T^{-1}.
    \label{eq: Ghat mass bound}
\end{equation}
    On $\Omega_T^m,$ it follows from \eqref{Leq} that
    \begin{align*}
        |\wmass(\hat{\boldsymbol{\tm}}^m(t))|
        &\leq
        \sum_{j=1}^Jp_j|U_1^{j,m}(t)|
        \\
        &\quad+
        C_T\sum_{j=1}^J
        p_j^2
        \left\|F_1^{\fl_{0,j}}\right\|_\infty
        \int_0^t
        |\wmass(\hat{\boldsymbol{\tm}}^m(u))|\,du
        \\
        &\quad+
        C_T\sum_{j=1}^J
        p_j^2
        \left\|F_1^{\pd_j}\right\|_\infty
        \int_0^t\int_r^t
        |\wmass(\hat{\boldsymbol{\tm}}^m(u))|
        \,du\,d\bar{\ap}_j^m(r),
    \end{align*}
    By Fubini's theorem,
    \begin{align*}
        \int_0^t\int_r^t
        |\wmass(\hat{\boldsymbol{\tm}}^m(u))|
        \,du\,d\bar{\ap}_j^m(r)
        &=
        \int_0^t
        |\wmass(\hat{\boldsymbol{\tm}}^m(u))|
        \left(
        \int_0^u d\bar{\ap}_j^m(r)
        \right)du
        \\
        &=
        \int_0^t
        |\wmass(\hat{\boldsymbol{\tm}}^m(u))|
        \bar{\ap}_j^m(u)\,du.
    \end{align*}
    Consequently,
    \begin{align}
        |\wmass(\hat{\boldsymbol{\tm}}^m(t))|
        &\leq
        \sum_{j=1}^Jp_j|U_1^{j,m}(t)|
        \notag\\
        &\quad+
        C_T\int_0^t
        |\wmass(\hat{\boldsymbol{\tm}}^m(u))|
        \left(
        \sum_{j=1}^J
        p_j^2
        \left\|F_1^{\fl_{0,j}}\right\|_\infty
        +
        \sum_{j=1}^J
        p_j^2
        \left\|F_1^{\pd_j}\right\|_\infty
        \bar{\ap}_j^m(u)
        \right)du,
        \label{Geq}
    \end{align}
    with high probability as $m \rightarrow \infty$.
    Applying Lemma~5.1 of
\cite{loeser2025diffusionlimitsmeasurevaluedqueueing}
pathwise on $\Omega_T^m$, together with the C-tightness of
$\{U_1^{j,m}\}_{m=1}^{\infty}$ for every $j\in\J$ and the
compact containment of
$\{\bar{\ap}_j^m\}_{m=1}^{\infty}$, gives the required compact
containment on $\Omega_T^m$. Since
$P(\Omega_T^m)\to1$, this implies compact containment of
\[
    \left\{
    \wmass(\hat{\boldsymbol{\tm}}^m(\cdot))
    \right\}_{m=1}^{\infty}.
\]
    Here we have used the renewal-process FLLN $\bar{\ap}_j^m(\cdot)\longrightarrow\ar_j(\cdot),$ uniformly on compact time intervals in probability. Recall that
    \[
        \hat G^m(t)
        :=
        \sqrt{m}\bigl(\bar G^m(t)-\bar G(t)\bigr).
    \]
    By \eqref{eq: Ghat mass bound}, for
    $0\leq s\leq t\leq T$,
    \begin{align*}
        |\hat G^m(t)-\hat G^m(s)|
        &\leq
        C_T\int_s^t
        |\wmass(\hat{\boldsymbol{\tm}}^m(u))|\,du,
    \end{align*}
    with high probability as $m\rightarrow \infty.$
    Thus, on the event
    \[
    \Omega_T^m
    \cap
    \left\{
    \sup_{u\in[0,T]}
    |\wmass(\hat{\boldsymbol{\tm}}^m(u))|
    \leq M
    \right\},
\]
    
    we have
    \[
        |\hat G^m(t)-\hat G^m(s)|
        \leq C_TM|t-s|,
        \qquad 0\leq s\leq t\leq T.
    \]
    Compact containment of
    $\wmass(\hat{\boldsymbol{\tm}}^m(\cdot))$ therefore implies
    C-tightness of $\{\hat G^m\}_{m=1}^{\infty}$.

    It remains to prove C-tightness of
    \[
        \left\{
        \wmass(\hat{\boldsymbol{\tm}}^m(\cdot))
        \right\}_{m=1}^{\infty}.
    \]
    Applying \eqref{Leq} at times $s$ and $t$, where
    $0\leq s\leq t\leq T$, gives
    \begin{align}
        &\left|
        \wmass(\hat{\boldsymbol{\tm}}^m(t))
        -
        \wmass(\hat{\boldsymbol{\tm}}^m(s))
        \right|
        \notag\\
        &\leq
        \sum_{j=1}^Jp_j
        |U_1^{j,m}(t)-U_1^{j,m}(s)|
        \notag\\
        &\quad+
        \sum_{j=1}^Jp_j
        |I_j^m(t)-I_j^m(s)|
        \notag\\
        &\quad+
        \sum_{j=1}^Jp_j
        \int_0^s
        |R_j^m(r,t)-R_j^m(r,s)|
        \,d\bar{\ap}_j^m(r)
        \notag\\
        &\quad+
        \sum_{j=1}^Jp_j
        \int_s^t
        |R_j^m(r,t)|
        \,d\bar{\ap}_j^m(r).
        \label{eq: mass oscillation}
    \end{align}

    Fix $0\leq r\leq s\leq t\leq T$. We first separate the
    change in the first coordinate from the change in the second
    coordinate:
    \begin{align*}
        |R_j^m(r,t)-R_j^m(r,s)|
        &\leq
        \sqrt{m}
        \Big|
        \left\langle
        1_{(p_j\bar G^m(r,t),\infty)\times(t-r,\infty)}
        -
        1_{(p_j\bar G(r,t),\infty)\times(t-r,\infty)},
        \pd_j
        \right\rangle
        \\
        &\qquad\qquad-
        \left\langle
        1_{(p_j\bar G^m(r,s),\infty)\times(t-r,\infty)}
        -
        1_{(p_j\bar G(r,s),\infty)\times(t-r,\infty)},
        \pd_j
        \right\rangle
        \Big|
        \\
        &\quad+
        \sqrt{m}
        \Big|
        \left\langle
        1_{(p_j\bar G^m(r,s),\infty)\times(t-r,\infty)}
        -
        1_{(p_j\bar G(r,s),\infty)\times(t-r,\infty)},
        \pd_j
        \right\rangle
        \\
        &\qquad\qquad-
        \left\langle
        1_{(p_j\bar G^m(r,s),\infty)\times(s-r,\infty)}
        -
        1_{(p_j\bar G(r,s),\infty)\times(s-r,\infty)},
        \pd_j
        \right\rangle
        \Big|.
    \end{align*}

    Applying Lemma~\ref{lem: weird calculation} to the first
    term, with $a=r$ and
    \[
        H_1(v):=p_j\bar G^m(r,v),
        \qquad
        H_2(v):=p_j\bar G(r,v),
        \qquad r\leq v,
    \]
    and with the second-coordinate threshold fixed at $t-r$, we
    obtain
    \begin{align*}
        &\sqrt{m}
        \Big|
        \left\langle
        1_{(p_j\bar G^m(r,t),\infty)\times(t-r,\infty)}
        -
        1_{(p_j\bar G(r,t),\infty)\times(t-r,\infty)},
        \pd_j
        \right\rangle
        \\
        &\qquad-
        \left\langle
        1_{(p_j\bar G^m(r,s),\infty)\times(t-r,\infty)}
        -
        1_{(p_j\bar G(r,s),\infty)\times(t-r,\infty)},
        \pd_j
        \right\rangle
        \Big|
        \\
        &\leq
        p_j|\hat G^m(s,t)|
        \left|
        F_1^{\pd_j}
        \bigl(p_j\bar G^m(r,s),t-r\bigr)
        \right|
        \\
        &\quad+
        \frac{p_j^2}{\sqrt{m}}
        \left\|F_{11}^{\pd_j}\right\|_\infty
        |\hat G^m(r,s)|^2
        \\
        &\quad+
        p_j^2
        \left\|F_{11}^{\pd_j}\right\|_\infty
        |\hat G^m(r,t)|
        \left(
        \bar G^m(s,t)
        +
        \frac{|\hat G^m(r,t)|}{\sqrt{m}}
        \right).
    \end{align*}

    The second term corresponds to a rectangle with horizontal
    side length
    \[
        p_j
        \left|
        \bar G^m(r,s)-\bar G(r,s)
        \right|
        =
        \frac{p_j|\hat G^m(r,s)|}{\sqrt{m}}
    \]
    and vertical side length $t-s$. Therefore,
    \begin{align*}
        &\sqrt{m}
        \Big|
        \left\langle
        1_{(p_j\bar G^m(r,s),\infty)\times(t-r,\infty)}
        -
        1_{(p_j\bar G(r,s),\infty)\times(t-r,\infty)},
        \pd_j
        \right\rangle
        \\
        &\qquad-
        \left\langle
        1_{(p_j\bar G^m(r,s),\infty)\times(s-r,\infty)}
        -
        1_{(p_j\bar G(r,s),\infty)\times(s-r,\infty)},
        \pd_j
        \right\rangle
        \Big|
        \\
        &\leq
        p_j
        \left\|F_{12}^{\pd_j}\right\|_\infty
        |\hat G^m(r,s)|(t-s).
    \end{align*}
    Combining these two estimates yields
    \begin{align}
        |R_j^m(r,t)-R_j^m(r,s)|
        &\leq
        p_j|\hat G^m(s,t)|
        \left|
        F_1^{\pd_j}
        \bigl(p_j\bar G^m(r,s),t-r\bigr)
        \right|
        \notag\\
        &\quad+
        \frac{p_j^2}{\sqrt{m}}
        \left\|F_{11}^{\pd_j}\right\|_\infty
        |\hat G^m(r,s)|^2
        \notag\\
        &\quad+
        p_j^2
        \left\|F_{11}^{\pd_j}\right\|_\infty
        |\hat G^m(r,t)|
        \left(
        \bar G^m(s,t)
        +
        \frac{|\hat G^m(r,t)|}{\sqrt{m}}
        \right)
        \notag\\
        &\quad+
        p_j
        \left\|F_{12}^{\pd_j}\right\|_\infty
        |\hat G^m(r,s)|(t-s).
        \label{eq: arrival strip oscillation}
    \end{align}

    The same argument, with $\fl_{0,j}$ in place of $\pd_j$ and
    $r=0$, gives
    \begin{align}
        |I_j^m(t)-I_j^m(s)|
        &\leq
        p_j|\hat G^m(s,t)|
        \left|
        F_1^{\fl_{0,j}}
        \bigl(p_j\bar G^m(0,s),t\bigr)
        \right|
        \notag\\
        &\quad+
        \frac{p_j^2}{\sqrt{m}}
        \left\|F_{11}^{\fl_{0,j}}\right\|_\infty
        |\hat G^m(0,s)|^2
        \notag\\
        &\quad+
        p_j^2
        \left\|F_{11}^{\fl_{0,j}}\right\|_\infty
        |\hat G^m(0,t)|
        \left(
        \bar G^m(s,t)
        +
        \frac{|\hat G^m(0,t)|}{\sqrt{m}}
        \right)
        \notag\\
        &\quad+
        p_j
        \left\|F_{12}^{\fl_{0,j}}\right\|_\infty
        |\hat G^m(0,s)|(t-s).
        \label{eq: initial strip oscillation}
    \end{align}
Finally, from \eqref{Rmvtbound}
    \begin{align*}
        \int_s^t|R_j^m(r,t)|\,d\bar{\ap}_j^m(r)
        &\leq
        p_j
        \left\|F_1^{\pd_j}\right\|_\infty
        \sup_{r\in[s,t]}|\hat G^m(r,t)|
        \left(
        \bar{\ap}_j^m(t)-\bar{\ap}_j^m(s)
        \right)
        \\
        &\leq
        2p_j
        \left\|F_1^{\pd_j}\right\|_\infty
        \|\hat G^m\|_T
        \left(
        \bar{\ap}_j^m(t)-\bar{\ap}_j^m(s)
        \right),
    \end{align*}
    where
    \[
        \|\hat G^m\|_T
        :=
        \sup_{u\in[0,T]}|\hat G^m(u)|.
    \]

    Substituting
    \eqref{eq: arrival strip oscillation} and
    \eqref{eq: initial strip oscillation} into
    \eqref{eq: mass oscillation}, we obtain a
    modulus-of-continuity bound for
    $\wmass(\hat{\boldsymbol{\tm}}^m(\cdot))$ in terms of the
    moduli of $U_1^{j,m}$, $\hat G^m$, $\bar G^m$, $\bar G$,
    and $\bar{\ap}_j^m$.
    Because the initial condition is tight, and thus compactly contained, this completes the proof.
    \end{proof}
   \begin{lem}
\label{componenttightnesslem}
    Let
    $\{\hat{\boldsymbol{\ssp}}^m(\cdot)\}_{m=1}^{\infty}$
    be a sequence of diffusion-scaled models satisfying
    Assumption~\ref{assumptions}, and let
    $f\in\mathscr{S}(\R_+^2)$.
    Suppose that, for every $j\in\J$,
    \[
        \left\{
        \left(
        U_f^{j,m}(\cdot),
        \hat G^m(\cdot))
        \right)
        \right\}_{m=1}^{\infty}
    \]
    is C-tight. Then
    \[
        \left\{
        \left(
        \left\langle
        f,\hat{\ssp}_j^m(\cdot)
        \right\rangle
        \right)_{j\in\J}
        \right\}_{m=1}^{\infty}
    \]
    is C-tight in $D([0,\infty),\R^J)$.
\end{lem}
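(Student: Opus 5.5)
The plan is to work directly from the representation \eqref{mainMeq}. For each $j\in\J$ it writes
\[
\langle f,\hat{\ssp}_j^m(t)\rangle = U_f^{j,m}(t)+I_{f,j}^m(t)+\int_0^t R_{f,j}^m(r,t)\,d\bar{\ap}_j^m(r),
\]
where
\[
I_{f,j}^m(t):=\sqrt{m}\langle t_{p_j\bar G^m(t),t}f-t_{p_j\bar G(t),t}f,\fl_{0,j}\rangle,
\qquad
R_{f,j}^m(r,t):=\sqrt{m}\langle t_{p_j\bar G^m(r,t),t-r}f-t_{p_j\bar G(r,t),t-r}f,\pd_j\rangle .
\]
Since C-tightness of $U_f^{j,m}$ is assumed, and a finite sum of C-tight real sequences is C-tight, it suffices to show that $I_{f,j}^m$ and the integral term are C-tight for each $j$. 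I would control both through $\hat G^m$ alone, mirroring the treatment of $I_j^m$ and $R_j^m$ in the proof of Lemma~\ref{componenttightnesslemtm}. That proof handled the case $f=1$ via $F^\mu$; here the Schwartz function supplies the smoothness instead.

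\textbf{Step 1: mean-value bounds.} Write $\langle t_{a,b}f,\mu\rangle=\int f(x,y)\,\phi_\mu(x+a,y+b)\,dx\,dy$, where $\phi_\mu\in\{h^j,g^j\}$ is the $\mathbf C_b^2$ density. Then $a\mapsto\langle t_{a,b}f,\mu\rangle$ is $C^2$, with first derivative bounded by $\|f\|_{L^1}\|\phi_{\mu,1}\|_\infty$ and second derivative bounded by $\|f\|_{L^1}\|\phi_{\mu,11}\|_\infty$. The mixed $(a,b)$ derivative is bounded similarly. This yields
\[
|I^m_{f,j}(t)|\le p_j C_f|\hat G^m(0,t)|,
\qquad
|R^m_{f,j}(r,t)|\le p_jC_f|\hat G^m(r,t)|,
\]
which are the analogues of \eqref{Imvtbound}--\eqref{Rmvtbound}. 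Compact containment then follows from the assumed C-tightness (hence compact containment) of $\hat G^m$, together with $\bar{\ap}_j^m(T)\Rightarrow\ar_jT$.

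\textbf{Step 2: oscillations.} For $0\le s\le t\le T$, split $|R^m_{f,j}(r,t)-R^m_{f,j}(r,s)|$ into a change in the first coordinate and a change in the second coordinate, exactly as in \eqref{eq: arrival strip oscillation}. The first-coordinate change is estimated by the analogue of Lemma~\ref{lem: weird calculation} with $F^\mu$ replaced by $a\mapsto\langle t_{a,b}f,\mu\rangle$; the proof is the same mean value argument. The result is bounded by
\[
C\Bigl(|\hat G^m(s,t)|+\tfrac{1}{\sqrt m}\|\hat G^m\|_T^2+\|\hat G^m\|_T\bigl(\bar G^m(s,t)+\tfrac{1}{\sqrt m}\|\hat G^m\|_T\bigr)+\|\hat G^m\|_T(t-s)\Bigr).
\]
Here $\bar G^m(s,t)\le C_T(t-s)$ on $\Omega_T^m$ by Remark~\ref{nonzerorem}. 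The same bound holds for $I^m_{f,j}$. The remaining piece $\int_s^t|R^m_{f,j}(r,t)|\,d\bar{\ap}_j^m(r)$ is at most $2p_jC_f\|\hat G^m\|_T(\bar{\ap}_j^m(t)-\bar{\ap}_j^m(s))$, and this is small by C-tightness of $\bar{\ap}_j^m$.

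\textbf{Step 3: conclusion and main obstacle.} Combining the three steps bounds the modulus of continuity of $\langle f,\hat\ssp_j^m\rangle$ by those of $U_f^{j,m}$, $\hat G^m$ and $\bar{\ap}_j^m$, plus terms that vanish as $\delta\to0$ or $m\to\infty$ on events of probability tending to one. Joint C-tightness in $\R^J$ then follows componentwise. The main obstacle is the second-order term in the analogue of Lemma~\ref{lem: weird calculation}: $t_{a,b}f$ involves the cutoff $1_{\{x>a,y>b\}}$, so I must verify that differentiating under the integral in the density form produces no boundary contribution. I would handle this by using the substituted form $\int f(x,y)\phi_\mu(x+a,y+b)\,dx\,dy$, in which the shift acts only on the smooth density.
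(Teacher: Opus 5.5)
Your proposal is correct and follows essentially the paper's route. You use the same decomposition into $U_f^{j,m}$, $\mathcal I_{f,j}^m$ and $\int_0^t\mathcal R_{f,j}^m(r,t)\,d\bar{\ap}_j^m(r)$, the substituted density form in which the shift acts only on $g_j,h_j$, and the same oscillation bound controlled by $\hat G^m$ and $\bar{\ap}_j^m$. The only cosmetic difference is that you route the first-coordinate estimate through an analogue of Lemma~\ref{lem: weird calculation}, which leaves harmless extra $m^{-1/2}\|\hat G^m\|_T^2$ terms, whereas the paper applies the mean value theorem pointwise to $g_j$ and avoids them.
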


\begin{proof}
    Fix $T>0$. Let $h_j$ and $g_j$ denote the densities of
    $\fl_{0,j}$ and $\pd_j$, respectively. For $t\geq0$, define
    \begin{align*}
        \mathcal I_{f,j}^m(t)
        &:=
        \sqrt{m}
        \left\langle
        t_{p_j\bar G^m(0,t),t}f
        -
        t_{p_j\bar G(0,t),t}f,
        \fl_{0,j}
        \right\rangle,
    \end{align*}
    and, for $0\leq r\leq t$, define
    \begin{align*}
        \mathcal R_{f,j}^m(r,t)
        &:=
        \sqrt{m}
        \left\langle
        t_{p_j\bar G^m(r,t),t-r}f
        -
        t_{p_j\bar G(r,t),t-r}f,
        \pd_j
        \right\rangle.
    \end{align*}
    Applying \eqref{mainMeq} and \eqref{Udef}, we obtain
    \begin{align}
        \left\langle
        f,\hat{\ssp}_j^m(t)
        \right\rangle
        &=
        U_f^{j,m}(t)
        +
        \mathcal I_{f,j}^m(t)
        +
        \int_0^t
        \mathcal R_{f,j}^m(r,t)
        \,d\bar{\ap}_j^m(r).
        \label{eq: general f representation}
    \end{align}

    We first derive an oscillation bound for
    $\mathcal R_{f,j}^m$. Since $\pd_j$ has density $g_j$,
    \begin{align*}
        \mathcal R_{f,j}^m(r,t)
        &=
        \sqrt{m}
        \int_{\R_+^2}
        f(x,y)
        \Big[
        g_j\bigl(
        x+p_j\bar G^m(r,t),y+t-r
        \bigr)
        \\
        &\hspace{43mm}
        -
        g_j\bigl(
        x+p_j\bar G(r,t),y+t-r
        \bigr)
        \Big]
        \,d\lambda(x,y).
    \end{align*}
Applying the mean value theorem, we obtain
    \begin{align*}
        &
        \Big|
        \bigl[
        g_j(x+p_j\bar{G}^m(r,t),y+t-r)-g_j(x+p_j\bar{G}(r,t),y+t-r)
        \bigr]
        \\&-
        \bigl[
        g_j(x+p_j\bar{G}^m(r,s),y+s-r)-g_j(x+p_j\bar{G}(r,s),y+s-r)
        \bigr]
        \Big|
        \\
        &\leq
        \frac{p_j}{\sqrt{m}}\|g_{j,x}\|_\infty
        |\hat{G}^m(s,t)|
        \\
        &\quad+
        \frac{p_j}{\sqrt{m}}|\hat{G}^m(r,s)|
        \left(
        \|g_{j,xx}\|_\infty p_j\bar{G}(s,t)
        +
    \|g_{j,xy}\|_\infty (t-s)
        \right).
    \end{align*}
    It then follows that
    \begin{align}
        &
        \left|
        \mathcal R_{f,j}^m(r,t)
        -
        \mathcal R_{f,j}^m(r,s)
        \right|
        \notag\\
        &\leq
        p_j
        \|f\|_{L^1(\lambda)}
        \|g_{j,x}\|_\infty
        |\hat G^m(s,t)|
        \notag\\
        &\quad+
        p_j
        \|f\|_{L^1(\lambda)}
        |\hat G^m(r,s)|
        \left[
        p_j\|g_{j,xx}\|_\infty
        \bar G(s,t)
        +
        \|g_{j,xy}\|_\infty(t-s)
        \right].
        \label{eq: general f arrival oscillation}
    \end{align}

    The same calculation, with $h_j$ in place of $g_j$ and
    $r=0$, gives
    \begin{align}
        &
        \left|
        \mathcal I_{f,j}^m(t)
        -
        \mathcal I_{f,j}^m(s)
        \right|
        \notag\\
        &\leq
        p_j
        \|f\|_{L^1(\lambda)}
        \|h_{j,x}\|_\infty
        |\hat G^m(s,t)|
        \notag\\
        &\quad+
        p_j
        \|f\|_{L^1(\lambda)}
        |\hat G^m(0,s)|
        \left[
        p_j\|h_{j,xx}\|_\infty
        \bar G(s,t)
        +
        \|h_{j,xy}\|_\infty(t-s)
        \right].
        \label{eq: general f initial oscillation}
    \end{align}
By the mean value theorem,
    \[
        |\mathcal R_{f,j}^m(r,t)|
        \leq
        p_j
        \|f\|_{L^1(\lambda)}
        \|g_{j,x}\|_\infty
        |\hat G^m(r,t)|.
    \]
    Therefore,
    \begin{align}
        \int_s^t
        |\mathcal R_{f,j}^m(r,t)|
        \,d\bar{\ap}_j^m(r)
        &\leq
        2p_j
        \|f\|_{L^1(\lambda)}
        \|g_{j,x}\|_\infty
        \|\hat G^m\|_T
        \notag\\
        &\qquad\times
        \bigl(
        \bar{\ap}_j^m(t)-\bar{\ap}_j^m(s)
        \bigr),
        \label{eq: general f new arrivals}
    \end{align}
    where
    \[
        \|\hat G^m\|_T
        :=
        \sup_{u\in[0,T]}|\hat G^m(u)|.
    \]
    Applying \eqref{eq: general f representation} at times
    $s$ and $t$, we obtain
    \begin{align}
        &
        \left|
        \left\langle
        f,\hat{\ssp}_j^m(t)
        \right\rangle
        -
        \left\langle
        f,\hat{\ssp}_j^m(s)
        \right\rangle
        \right|
        \notag\\
        &\leq
        |U_f^{j,m}(t)-U_f^{j,m}(s)|
        +
        |\mathcal I_{f,j}^m(t)-\mathcal I_{f,j}^m(s)|
        \notag\\
        &\quad+
        \int_0^s
        |\mathcal R_{f,j}^m(r,t)
        -
        \mathcal R_{f,j}^m(r,s)|
        \,d\bar{\ap}_j^m(r)
        \notag\\
        &\quad+
        \int_s^t
        |\mathcal R_{f,j}^m(r,t)|
        \,d\bar{\ap}_j^m(r).
        \label{eq: general f total oscillation}
    \end{align}
Controlled oscillations then follow from C-tightness of $U^{j,m}_f$ and $\hat{G}^m,$ as desired.  Because the initial condition is tight, and thus compactly contained, this completes the proof.
\end{proof}

In order to obtain convergence of $U_f^j(\cdot)$ for $f \in \mathscr{S}\cup \{1\},$ we first prove convergence of $\hat{\mart}^{\ap_j,j,m}_{t_{p_j\bar{G}^m(\cdot,t),t-\cdot}f^t}(t)$ for $f \in \mathscr{S}\cup \{1\}.$
\begin{proof}[Proof of Theorem \ref{Ydefthm}]
The proof is the same as the proof of Theorem 3.1 in \cite{loeser2025diffusionlimitsmeasurevaluedqueueing}, and is thus omitted.
\end{proof}
\begin{lem}
    For each finite collection of functions \(f_1,\ldots,f_n\in\mathscr{S}(\R_+^2)\cup \{1\}\),
    \[
\left\{\Phi_{f_a,t}^{j,m}(r)
,
a\in[n],\ j\in[J],\ 0\le r\le t\le T
\right\}
\Rightarrow \left\{
\hat{\mart}_{f_a}^{\ap_j,j}(r,t),
a\in[n],\ j\in[J],\ 0\le r\le t\le T
\right\}
\]
as a multiparameter process in the Skorokhod space, where the convergence is in distribution.
    \normalsize
    \label{martingalelimlem}
\end{lem}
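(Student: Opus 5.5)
The plan is to compare the random-shift field $\Phi_{f,t}^{j,m}(r)=\hat{\othermart}^{\ap_j,j,m}_{g_{p_j\bar G^m(t),t}}(r)$ with a deterministic-shift field, and then invoke Theorem~\ref{Ydefthm}. Define
\[
\Psi_{f,t}^{j,m}(r):=\hat{\mart}^{\ap_j,j,m}_{\varphi_{t}}(r),\qquad \varphi_t(w,x,y):=t_{p_j\bar G(w,t),\,t-w}f(x,y),
\]
so the shift is evaluated along the fluid path $\bar G$ instead of $\bar G^m$.

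\textbf{Step 1: the deterministic-shift field.} For the fields $\{\Psi_{f_a,t}^{j,m}(r)\}$ I will use the argument behind Theorem~\ref{Ydefthm}, i.e.\ Theorem~3.1 of \cite{loeser2025diffusionlimitsmeasurevaluedqueueing}. Finite-dimensional convergence follows from a martingale CLT for sums over arrivals of centered i.i.d.\ marks. The predictable quadratic covariation is
\[
\frac1m\sum_{i\le m\bar{\ap}^m_j(r_1\wedge r_2)}\operatorname{Cov}_{\pd_j}\bigl(\varphi_{t_1},\varphi_{t_2}\bigr)(\iinta_i^{j,m}/m).
\]
By the FLLN for $\bar{\ap}_j^m$ and continuity of $w\mapsto \varphi_t(w,\cdot,\cdot)$ in $L^2(\pd_j)$, this converges to the covariance in Theorem~\ref{Ydefthm}. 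Continuity in $L^2(\pd_j)$ holds because $\pd_j$ has a $C_b^2$ density, which also handles $f=1$. Independence across $j$ gives the zero cross-covariance. Tightness of the multiparameter field in $(r,t)$ comes from the two-parameter Kolmogorov/Bickel--Wichura moment criterion. For $f\in\mathscr S$, Burkholder's inequality together with the Lipschitz dependence of $\varphi_t$ on $t$ (via $\bar G$ and the density) gives fourth-moment increment bounds. For $f=1$, I will instead bound the $\pd_j$-mass of the symmetric difference of the translated quadrants by $C|t-t'|$ using the bounded density.

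\textbf{Step 2: the random-shift error is negligible.} I will show
\[
\sup_{0\le r\le t\le T}\bigl|\Phi_{f,t}^{j,m}(r)-\Psi_{f,t}^{j,m}(r)\bigr|\to0
\]
in probability. The difference is again a sum of centered increments, with summand depending on the shift through $p_j\bar G^m(t)-p_j\bar G^m(\iinta_i^{j,m}/m-)$ versus $p_j\bar G(\iinta_i^{j,m}/m,t)$. This discrepancy is $O_P(m^{-1/2})$ uniformly on $[0,T]$ by Lemma~\ref{componenttightnesslemtm} (compact containment of $\hat G^m$), together with the fact that $\bar G^m$ has jumps of size at most $1/m$.

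The difficulty is that $\bar G^m(t)$ is not adapted at time $r<t$, so the martingale property cannot be applied directly to $\Phi$. I will handle this by discretization. Fix a grid $\{u_k\}$ of mesh $\eta$ for the endpoint shift $u=p_j\bar G^m(t)$, and use the martingale family $\hat{\othermart}^{\ap_j,j,m}_{g_{u_k,s}}$. At each grid point the shift $u_k-p_j\bar G^m(w-)$ is predictable, so Doob's inequality bounds the difference to the fluid-shift version by the $L^2(\pd_j)$ distance of the integrands. That distance is small on the event $\{\|\hat G^m\|_T\le M\}$. The bound is $O(\sqrt{\eta}+M m^{-1/2})$ for $f=1$ via the density, and $O(\eta+Mm^{-1/2})$ for Schwartz $f$.

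Interpolating between grid points uses the equicontinuity in the shift parameter obtained from the tightness bounds of Step~1. I then let $m\to\infty$, $\eta\to0$, $M\to\infty$.

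\textbf{Main obstacle.} The key difficulty is this uniform-in-the-random-shift control for $f=1$. Indicators are not Lipschitz, so only the $\sqrt{\eta}$ modulus from the bounded density is available, and the grid must be chosen so that the union bound over grid points still vanishes. I would first try a chaining bound with fourth moments, which gives increments of order $|u-u'|^{2}$ in expectation and hence enough summability. Once Step~2 holds, the convergence of $\Phi$ follows from Step~1 by a converging-together argument in the Skorokhod space. The limit is continuous, so the joint convergence over $a\in[n]$ and $j\in[J]$ is inherited.
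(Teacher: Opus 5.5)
Your Step~2 is circular as written. You take the $O_P(m^{-1/2})$ size of the shift discrepancy from compact containment of $\hat G^m$ via Lemma~\ref{componenttightnesslemtm}. But that lemma is conditional on C-tightness of $U_1^{j,m}$, and $U_1^{j,m}(t)$ contains the diagonal term $\Phi_{1,t}^{j,m}(t)$. The paper only obtains tightness of that term in Lemma~\ref{uconvlemma}, from the very lemma you are proving.

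The rate is not actually needed. For a fixed grid value $u_k$, compare the fields with inner shifts $(u_k-p_j\bar G^m(\iinta_i^{j,m}/m-))^+$ and $(u_k-p_j\bar G(\iinta_i^{j,m}/m))^+$. Their difference is a martingale that already carries the $m^{-1/2}$ normalization, with predictable quadratic variation $\frac1m\sum_i\|\cdot\|^2_{L^2(\pd_j)}$. So it vanishes as soon as $\|\bar G^m-\bar G\|_T\to0$ in probability, which follows from Theorem~\ref{thm: fluid limit}. You should also localize with a stopping time such as $\inf\{w:|\bar G^m(w)-\bar G(w)|>\delta\}$. The event $\{\|\hat G^m\|_T\le M\}$ is not adapted, so Doob's inequality cannot be applied on it directly.

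The second gap is the interpolation between grid points, which Step~1 does not supply. In Step~1 the $x$-shift $p_j\bar G(w,t)$ is tied to $t$, so you get no modulus of continuity in a free shift $u$. You get none at all for the field $(r,u,t)\mapsto\hat{\othermart}^{\ap_j,j,m}_{g_{u,t}}(r)$ with random (predictable) inner shift. What you need, jointly in $(u,t)$ and uniformly in $r$, is exactly C-tightness of the paper's three-parameter field $X^{\ap,j,m}_{f_a}(r,s,t)$. That is the core of the paper's proof, done via the rectangular increments $\xi_i^{\ap}((s_1,s_2]\times(t_1,t_2])$ and the bound $E[(\xi_i^{\ap})^2]\le C(s_2-s_1)(t_2-t_1)$.

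Your fourth-moment chaining is also thin here. Increments of order $|u-u'|^2$ are borderline for Kolmogorov's criterion once $t$ varies too. For $f=1$ the Burkholder--Rosenthal bound also leaves an extra $|u-u'|/m$ term. So you need higher moments (the summands are bounded), block increments, or monotone bracketing.

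Once the three-parameter tightness is available, the comparison with $\Psi$ is unnecessary. The paper obtains finite-dimensional convergence of $X^{\ap,j,m}_{f_a}(\cdot,s,t)$ for fixed $(s,t)$ from the martingale CLT, which uses only $\bar G^m\to\bar G$. It then substitutes the asymptotically deterministic time changes $r\mapsto\bar\ap_j^m(r)$ and $s=p_j\bar G^m(t)$ into the C-tight field.
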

\begin{proof}
For $1 \leq a \leq n,$ $j \in \J,$ $r, s, t\in [0,T]$ define
\begin{align*}
    &X^{\ap,j,m}_{f_a}(r,s,t)\\&:=\frac{1}{\sqrt{m}}\sum_{i=1}^{\lfloor mr\rfloor}\left(t_{(s-p_j\bar{G}^m(\iinta_i^{j,m}/m))^+, (t-\iinta_i^{j,m}/m)^+}f_a(\ser_i^j, \pat_i^j)- \langle t_{(s-p_j\bar{G}^m(\iinta_i^{j,m}/m))^+, (t-\iinta_i^{j,m}/m)^+}f_a,\pd_j\rangle\right), 
\end{align*}
and note that, applying \eqref{eq: diffusion scaled Y}, $\Phi_{f_a,t}^{j,m}(r)=X^{\ap,j,m}_{f_a}(\bar{\ap}^m_j(r),p_j\bar{G}^m(t),t).$
    Applying Corollary 4.1 of \cite{loeser2025diffusionlimitsmeasurevaluedqueueing} we see that, fixing $(s_1,t_1),...,(s_k,t_k),$
    \begin{align*}
       & \left\{
X^{\ap,j,m}_{f_a}(r,s_i,t_i),
a\in[n],\ j\in[J], i \in [k],  r\in [0,T]
\right\}\\
&\Rightarrow \left\{
\hat{\mart}_{f_a}^{\ap_j,j}((\ar_j^{-1}r)\wedge t_i,\bar{G}^{-1}(s_i/p_j),t_i),
a\in[n],\ j\in[J], i\in [k],r \in [0,T]
\right\}.
\numberthis
\label{eq: fdd for X}
    \end{align*}
 
We remark that this is a reasonably straightforward application of the martingale central limit theorem (see e.g., \cite{ethierandkurtz}, Chapter 7, Theorem 1.4 (b)).
However, should the reader want to see all of the assumptions checked and the quadratic covariations explicitly calculated for martingales of this form, all of the details are available in the proof of Corollary 4.1 from \cite{loeser2025diffusionlimitsmeasurevaluedqueueing}.
We claim that, if $\{X^{\ap,j,m}_{f_a}(\cdot,\cdot,\cdot)\}_{m=1}^{\infty}$ is C-tight, the desired convergence follows.
Indeed, if $\{X^{\ap,j,m}_{f_a}(\cdot,\cdot,\cdot)\}_{m=1}^{\infty}$ is C-tight as a multiparameter process, we find that it converges in distribution to the unique process with finite-dimensional distributions given by \eqref{eq: fdd for X}.
Furthermore, because $\bar{G}^m(\cdot),\bar{\ap}_j^m(\cdot)$ are time-changes with continuous deterministic limits, C-tightness of $\{\Phi_{f_a,t}^{j,m}\}_{m=1}^{\infty}$ follows from C-tightness of $\{X^{\ap,j,m}_{f_a}(\cdot,\cdot,\cdot)\}_{m=1}^{\infty}.$
Finally, again using the fact that $\bar{G}^m(\cdot),\bar{\ap}_j^m(\cdot)$ are time-changes with continuous deterministic limits, we notice that any subsequential limit of $\{\Phi_{f_a,t}^{j,m}\}_{m=1}^{\infty}$ would have f.d.d. equal to those of the process $\hat{\mart}_{f_a}^{\ap_j,j}$.
This reduces the proof to C-tightness of $X^{\ap,j,m}_{f_a}.$

Therefore, all that remains to be checked is C-tightness of  the process $X^{\ap,j,m}_{f_a}$.
The proof of C-tightness is very similar to the proof of Lemma 6.4 of \cite{loeser2025diffusionlimitsmeasurevaluedqueueing}, particularly for the $\hat{Y}^{\ap_j,j}_f(\cdot)$ families of martingales.
However, because we have moved from measures on $\R_+$ to measures on $\R_+^2,$ the proof will need to be slightly adjusted.
In particular, one must replace the time changed process $X^{\ap,m}(r,t)$ with
$X^{\ap,m}(r,s,t)$.
Then, defining the $(s_1,s_2]\times (t_1,t_2]$ increment function of a Borel measurable function $f:\R_+^2\rightarrow \R$ as follows:
$$\Delta_{(s_1,s_2]\times (t_1,t_2]}f:=t_{s_1,t_1}f-t_{s_2,t_1}f-t_{s_1,t_2}f+t_{s_2,t_2}f,$$
one may replace the incremental terms $\xi_i(t_1,t_2)$ given in the proof from \cite{loeser2025diffusionlimitsmeasurevaluedqueueing} with the incremental terms
\begin{align*}
    \xi_i^{\ap}((s_1,s_2]\times(t_1,t_2])&:= \Delta_{((s_1-p_j\bar{G}^m(\iinta_i^{j,m}/m))^+,(s_2-p_j\bar{G}^m(\iinta_i^{j,m}/m))^+]\times((t_1-\iinta_i^{j,m}/m)^+,(t_2-\iinta_i^{j,m}/m)^+]}f(\ser_i^j,\pat_i^j)\\&-\langle \Delta_{((s_1-p_j\bar{G}^m(\iinta_i^{j,m}/m))^+,(s_2-p_j\bar{G}^m(\iinta_i^{j,m}/m))^+]\times((t_1-\iinta_i^{j,m}/m)^+,(t_2-\iinta_i^{j,m}/m)^+]}f,\pd_j\rangle.
\end{align*}
We also define an inverse increment of a Borel measurable function $f:\R_+^2\rightarrow \R$ as
$$\Delta_{(s_1,s_2]\times (t_1,t_2]}^{-1}f(x,y):= f(x+s_1,y+t_1)-f(x+s_2,y+t_1)-f(x+s_1,y+t_2)+f(x+s_2,y+t_2).$$
Then we observe that
\begin{align*}
     &E[(\xi_i^{\ap}((s_1,s_2]\times(t_1,t_2]))^2]\\&= E[E[(\xi_i^{\ap}((s_1,s_2]\times(t_1,t_2]))^2|\bar{G}^m(\iinta_i^{j,m}/m),\iinta_i^{j,m}]]\\
     & \leq |E[\langle (\Delta_{((s_1-p_j\bar{G}^m(\iinta_i^{j,m}/m))^+,(s_2-p_j\bar{G}^m(\iinta_i^{j,m}/m))^+]\times((t_1-\iinta_i^{j,m}/m)^+,(t_2-\iinta_i^{j,m}/m)^+]}f)^2,\pd_j\rangle]|\numberthis \label{firstsecondmomentbound}\\&+|E[\langle \Delta_{((s_1-p_j\bar{G}^m(\iinta_i^{j,m}/m))^+,(s_2-p_j\bar{G}^m(\iinta_i^{j,m}/m))^+]\times((t_1-\iinta_i^{j,m}/m)^+,(t_2-\iinta_i^{j,m}/m)^+]}f,\pd_j\rangle^2]| \numberthis \label{secondsecondmomentbound}
\end{align*}
Next, we observe that, by the fundamental theorem of calculus, for any $L^1$ function $f:\R_+^2\rightarrow \R$ and $0 \leq s_1 \leq s_2,0 \leq t_1 \leq t_2,$
$$|\langle \Delta_{(s_1,s_2]\times (t_1,t_2]}f, \pd_j\rangle| \leq \|f\|_{L^1}\|g_{st}^j\|_\infty(s_2-s_1)(t_2-t_1) $$
and by direct computation, for $0 \leq s_1 \leq s_2,0 \leq t_1 \leq t_2,$
$$\langle 1_{(s_1-p_j\bar{G}^m(\iinta_i^{j,m}/m),s_2-p_j\bar{G}^m(\iinta_i^{j,m}/m)]\times(t_1-\iinta_i^{j,m}/m,t_2-\iinta_i^{j,m}/m]},\pd_j\rangle \leq \|g^j\|_\infty(s_2-s_1)(t_2-t_1).$$
Therefore, taking $C=(\|f\|_{L^1} \|g^{j}_{st}\|_\infty\vee \|g^j\|_\infty)^2, $ we obtain the desired moment bound of
$ C(s_2-s_1)(t_2-t_1)$ for \eqref{secondsecondmomentbound} so long as $|t_2-t_1| \vee |s_2-s_1| \leq 1.$
Now, we examine
$|\langle (\Delta_{(s_1,s_2]\times (t_1,t_2]}f)^2, \pd_j\rangle|$ for $0 \leq s_1 \leq s_2, 0 \leq t_1 \leq t_2.$ For the $f\equiv 1$ case, it is straightforward to verify that $\left(\Delta_{(s_1,s_2]\times(t_1,t_2]}1\right)^2
=
\Delta_{(s_1,s_2]\times(t_1,t_2]}1$ for $0\le s_1\le s_2$ and $0\le t_1\le t_2$, so we may use the bound for the right-hand side.
For $f \in L^1(\R_+^2),$ the bound is significantly more technical.
We expand
\begin{align*}
    \langle (\Delta_{(s_1,s_2]\times (t_1,t_2]}f)^2, \pd_j\rangle&= \langle t_{s_1,t_1}f\Delta_{(s_1,s_2]\times (t_1,t_2]}f, \pd_j \rangle \numberthis \label{squarebitone}\\
    & - \langle t_{s_1,t_2}f\Delta_{(s_1,s_2]\times (t_1,t_2]}f, \pd_j \rangle \numberthis \label{squarebittwo}\\
    & - \langle t_{s_2,t_1}f\Delta_{(s_1,s_2]\times (t_1,t_2]}f, \pd_j \rangle\numberthis \label{squarebitthree}\\
    &+\langle t_{s_2,t_2}f\Delta_{(s_1,s_2]\times (t_1,t_2]}f, \pd_j \rangle \numberthis \label{squarebitfour}
\end{align*}
For \eqref{squarebitone}, we see that
\begin{align*}
    |\langle t_{s_1,t_1}f\Delta_{(s_1,s_2]\times (t_1,t_2]}f, \pd_j \rangle|
    &= \bigg|\int_0^{\infty}\int_0^{\infty}
    f(x,y)\Delta^{-1}_{[0,s_2-s_1)\times [0,t_2-t_1)}
    \left(f(x,y)g^j(x+s_1,y+t_1)\right)\,dx\,dy\bigg|\\
    & \leq \|f\|_{L^{1}} C_1(t_2-t_1)(s_2-s_1)
\end{align*}
where $C_1$ depends only on the $L^{\infty}$ norms of the first and second derivatives of $f,g^j.$
Similarly, for \eqref{squarebitfour}
\begin{align*}
   | \langle t_{s_2,t_2}f\Delta_{(s_1,s_2]\times (t_1,t_2]}f, \pd_j \rangle |
   &=\bigg|\int_0^{\infty}\int_0^{\infty}
   f(x,y)g^j(x+s_2,y+t_2)
   \Delta^{-1}_{[0,s_2-s_1)\times [0,t_2-t_1)}f(x,y)\,dx\,dy\bigg|\\
    & \leq \|f\|_{L^1} \|g^j\|_\infty C_2(t_2-t_1)(s_2-s_1)
\end{align*}
where $C_2$ only depends on the $L^{\infty}$ norms of the first and second derivatives of $f.$
For \eqref{squarebittwo}, we have
\begin{align*}
    |\langle t_{s_1,t_2}\Delta_{(s_1,s_2]\times (t_1,t_2]}f, \pd_j \rangle| & \leq \bigg| \int_0^{\infty}\int_0^{s_2-s_1} f(x,y)(f(x,y)-f(x,y+t_2-t_1))g^j(x+s_1,y+t_2)\,dx\,dy\bigg|\\
    &+\bigg|\int_0^{\infty}\int_0^{\infty} f(x+s_2-s_1,y)( \Delta^{-1}_{(0,s_2-s_1]\times(0, t_2-t_1]}f(x,y))g^j(x+s_2,y+t_2)\,dx\,dy\bigg|\\
    & \leq C_3( L_{\pd_j^x} \|f\|_\infty + \|f\|_{L^1}\|g^j\|_\infty)(t_2-t_1)(s_2-s_1)
\end{align*}
where $C_3$ is again a constant that only depends on the $L^{\infty}$ norms of the first and second derivatives of $f.$
The same principle applies to \eqref{squarebitthree}.
We conclude that there exists some $C$ such that
$$E[(\xi_i^{\ap}((s_1,s_2]\times(t_1,t_2]))^2] \leq C(t_2-t_1)(s_2-s_1).$$
All other elements of the proof remain exactly the same.
\end{proof}
\begin{lem}

\label{uconvlemma}
Let $f_1,...,f_n \in {\mathscr{S}(\R_+^2)}\cup \{1\}.$ 
Then, the sequence
\[
\boldsymbol{U}_{\boldsymbol{f}}^{m}(\cdot)
:=
\left(
U_{f_a}^{j,m}(\cdot):
a\in[n],\ j\in[J]
\right)
\]
converges in distribution to the process $\boldsymbol{U}_{\boldsymbol{f}}(\cdot)$ defined in Definition \ref{Udefdef}.
\end{lem}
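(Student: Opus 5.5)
The plan is to treat the three summands in \eqref{Udef} separately, prove joint convergence of the triple to the three independent limiting objects in Definition~\ref{Udefdef}, and then apply the continuous mapping theorem to the sum. I will fix $T>0$ and work on $[0,T]$ throughout. The first step is to replace the random time changes by their deterministic limits. By Theorem~\ref{thm: fluid limit}, $\bar G^m(\cdot)\to\bar G(\cdot)$ uniformly on compacts in probability. Since $\bar G$ is continuous and deterministic, the Skorokhod representation and a random time change argument let me substitute $\bar G$ for $\bar G^m$ in each term at the cost of $o_P(1)$ errors. This works provided each multiparameter field involved is C-tight, which I establish along the way.

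\textbf{Term 1 (initial condition).} The term is $F_f^{j,c,m}(p_j\bar G^m(t),t)$. By Assumption~\ref{assumptions}\ref{initialconditionsassumption}, $F_f^{j,m}\to F_f^j$ in probability uniformly on compacts, and $F_f^j$ is almost surely continuous. Composing with $(p_j\bar G^m(t),t)\to(p_j\bar G(t),t)$ uniformly on $[0,T]$ gives
\[
F_f^{j,m}(p_j\bar G^m(\cdot),\cdot)\to F_f^j(p_j\bar G(\cdot),\cdot)
\]
in probability in $C[0,T]$.

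\textbf{Term 2 (martingale).} The term is $\Phi_{f,t}^{j,m}(t)$. Lemma~\ref{martingalelimlem} gives convergence of the multiparameter field $\{\Phi^{j,m}_{f_a,t}(r)\}_{0\le r\le t\le T}$ to $\hat Y^{\ap_j,j}_{f_a}(r,t)$, with continuous limits. Because the limit is continuous and the map $\{Z(r,t)\}\mapsto\{Z(t,t)\}$ is continuous at continuous fields, restricting to the diagonal yields convergence of $\Phi^{j,m}_{f_a,\cdot}(\cdot)$ to $\hat Y^{\ap_j,j}_{f_a}(\cdot,\cdot)$. Joint convergence over $a\in[n]$ and $j\in\J$ is already part of Lemma~\ref{martingalelimlem}.

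\textbf{Term 3 (renewal).} The term is $\int_0^t\langle t_{p_j\bar G(s,t),t-s}f,\pd_j\rangle\,d\hat{\ap}^m_j(s)$. Here the integrand $k(s,t):=\langle t_{p_j\bar G(s,t),t-s}f,\pd_j\rangle$ is deterministic. Under the density assumptions it is $C^1$ in $s$, bounded, and continuous in $t$. I integrate by parts to get
\[
k(t,t)\hat{\ap}^m_j(t)-k(0,t)\hat{\ap}^m_j(0)-\int_0^t\hat{\ap}^m_j(s)\,\partial_sk(s,t)\,ds,
\]
which is a continuous functional of $\hat{\ap}^m_j$ in the $J_1$ topology at continuous limits. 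For $f=1$, $k(s,t)=\pd_j((p_j\bar G(s,t),\infty)\times(t-s,\infty))$, and it is differentiable because the tailed CDF lies in $C_b^2$ and $\bar G$ is $C^1$. By Assumption~\ref{assumptions}\ref{cltforrenewalassumption} and continuous mapping, this term converges to $\int_0^t k(s,t)\,d\hat{\ap}_j(s)$. Alternatively, one may invoke the stochastic-integral limit theory of \cite{loeser2025diffusionlimitsmeasurevaluedqueueing}.

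\textbf{Joint convergence and independence; main obstacle.} The main obstacle is showing the three limits are jointly realized as mutually independent. Each marginal sequence is C-tight, so the vector is tight, and it remains to identify the joint law.

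The initial-condition data are independent of the arrival and service/patience sequences, so term 1 is asymptotically independent of terms 2 and 3. The exception is the time change $\bar G^m$, but it has a deterministic limit, so after the substitution in the first step the dependence vanishes in the limit.

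Terms 2 and 3 are built from the same arrival times, so independence there is less trivial. I would argue that $X^{\ap,j,m}_{f_a}$ is a martingale indexed by arrival count, conditionally on the whole arrival sequence, with conditional quadratic variation converging to a deterministic limit. A conditional martingale CLT, or the proof of Corollary~4.1 in \cite{loeser2025diffusionlimitsmeasurevaluedqueueing} applied jointly with $\hat{\ap}^m_j$, then shows that the limit of the $X$-field is Gaussian with a covariance that does not depend on the arrival path. Hence it is independent of the limit of $\hat{\ap}_j^m$.

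Independence across classes follows from Assumption~\ref{assumptions}(ii). Summing the three terms with the continuous mapping theorem then gives $\boldsymbol U^m_{\boldsymbol f}\Rightarrow\boldsymbol U_{\boldsymbol f}$.
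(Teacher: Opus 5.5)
Your proposal is correct, but it identifies the joint law by a different route than the paper. The paper lifts all of \eqref{Udef} to the two-parameter field $U_f^{j,m}(r,t)$ of \eqref{Umultiindexdef}. For fixed endpoints $t_1,\dots,t_l$, it obtains the joint finite-dimensional limits in one stroke from the renewal-driven CLT (Theorem 5.1 of \cite{loeser2025diffusionlimitsmeasurevaluedqueueing}). That citation is also the source of the mutual independence of $\{\hat{\ap}_j\}$, $\{F^j_{f_a}\}$ and $\{\hat{\mart}^{\ap_j,j}_{f_a}\}$ required in Definition~\ref{Udefdef}. The paper then proves C-tightness of the field term by term. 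For the renewal integral it extends Lemmas 4.2--4.3 of that reference to integrands depending continuously on $t$ with total variation bounded uniformly in $t$. Finally it restricts to the diagonal.

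You instead work on the diagonal term by term, treat the renewal integral by pathwise integration by parts, and identify the joint law by hand via asymptotic independence and a conditional (stable) martingale CLT. Your version is more self-contained and explains why the three noise sources decouple, which the paper leaves inside the cited theorem. The paper's version is shorter and stays within the general renewal-driven framework.

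For $f\equiv 1$ you do not need differentiability of $k(\cdot,t)$, which the assumptions do not directly give for the $y$-marginal part. Monotonicity in $s$ with values in $[0,1]$ (the paper's total-variation check) already suffices for a Stieltjes integration by parts and for continuity of the resulting functional.

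\textbf{One step needs repair.} Your argument that term~1 is asymptotically independent of term~2 is not right as stated. The quantity $\bar G^m(\iinta_i^{j,m}/m-)$ sits inside every summand of $\Phi^{j,m}_{f,t}$. The C-tightness/time-change substitution replaces only the outer evaluation point $p_j\bar G^m(t)$, so after it term~2 is still a function of the initial data. The fix lies within your own framework. Put the initial data, together with all classes' arrival sequences, into the initial $\sigma$-field of the conditional martingale CLT, indexing by the global arrival order.
\begin{itemize}
\item The summands remain martingale differences, because $\bar G^m(\iinta_i^{j,m}/m-)$ is predictable and $(\ser_i^j,\pat_i^j)$ is independent of the past.
\item The conditional quadratic variation still converges to a deterministic limit.
\item This gives asymptotic independence of term~2 from terms~1 and~3 simultaneously.
\end{itemize}
Similarly, the delay $\inta_0^{j,m}$ is part of the initial data $\boldsymbol{a}^m(0)$. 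Its effect on $\hat{\ap}^m_j$ is negligible by Assumption~\ref{assumptions}\ref{fllnassumption1}, so the exact independence you invoke between the initial data and $\hat{\ap}^m_j$ holds asymptotically.
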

\begin{proof}
For each $f \in \mathscr{S}(\R_+^2)\cup \{1\},$ define the multi-index process
 \begin{align*}
        U_{f}^{j,m}(r,t) &= F_{f}^{j,c,m}(p_j\bar{G}^m(t),t) +\Phi_{f,t}^{j,m}(r) 
  +\int_0^r\langle t_{p_j\bar{G}(s,t),t-s}f, \pd_j \rangle d\hat{\ap}_j^m(s).
     \numberthis \label{Umultiindexdef}
    \end{align*}
Then, fixing $t_1,...,t_l\geq 0,$ one may apply the Central Limit Theorem for Renewal-Driven Processes proved in \cite{loeser2025diffusionlimitsmeasurevaluedqueueing} (Theorem 5.1) to see that for $f_1,...,f_n \in \mathscr{S}(\R_+^2) \cup \{1\},$
\[
(\boldsymbol{U}_{\boldsymbol{f}}^{m}(\cdot,t_1),...,\boldsymbol{U}_{\boldsymbol{f}}^{m}(\cdot,t_l))
:=
\left(
U_{f_a}^{j,m}(\cdot,t_i):
a\in[n],\ j\in[J], i \in [l]
\right)
\]
converges to a process whose f.d.d. are equal to those of the process defined in Definition \ref{Udefdef}.
Therefore, as long as $U_f^{j,m}(\cdot,\cdot)$ is C-tight as a multi-index process, the result follows from restriction to the diagonal. In particular, since every subsequential limit of the multi-index
process is continuous, the diagonal restriction is continuous at the
limiting process, and the continuous mapping theorem applies.
To show this, we simply show that each term on the right-hand side of \eqref{Umultiindexdef} is C-tight.
The first term on the right-hand side converges by Assumption \ref{assumptions} \ref{initialconditionsassumption}.
C-tightness for the second term on the right-hand side is proved in Lemma \ref{martingalelimlem}.
The only term left to check is the last one.
    The proof of convergence of this term is the same as the proof of convergence for the analogous term in \cite{loeser2025diffusionlimitsmeasurevaluedqueueing}, which is given in the proof of Lemma 6.5 of that paper, so it is omitted.
    To briefly summarize, that proof extends Lemmas 4.2 and 4.3 of \cite{loeser2025diffusionlimitsmeasurevaluedqueueing}, which establish stochastic-integral convergence for deterministic integrands against diffusion-scaled renewal processes, to integrands that depend continuously on the upper limit of integration $t$ while having total variation bounded uniformly in $t$.
    To verify the total variation condition, note that, when $f\equiv 1$, the map
\[
s\mapsto
\left\langle
t_{p_j\bar{G}(s,t),t-s}1,\pd_j
\right\rangle
\]
is monotone on $[0,t]$ and takes values in $[0,1]$. Hence
\[
\sup_{t\leq T}
TV\left(
\left\langle
t_{p_j\bar{G}(\cdot,t),t-\cdot}1,\pd_j
\right\rangle
\right)_{[0,t]}
\leq 1.
\]
For $f\in\mathscr{S}(\R_+^2)$, let $g^j$ denote the density of
$\pd_j$. By a change of variables,
\[
\left\langle t_{p_j\bar{G}(s,t),t-s}f,\pd_j\right\rangle
=
\int_{\R_+^2}
f(x,y)
g^j(x+p_j\bar{G}(s,t),y+t-s)
\,dx\,dy.
\]
Thus, for $0\leq s_1\leq s_2\leq t\leq T$, the mean value
theorem gives
\begin{align*}
&\left|
\left\langle t_{p_j\bar{G}(s_2,t),t-s_2}f,\pd_j\right\rangle
-
\left\langle t_{p_j\bar{G}(s_1,t),t-s_1}f,\pd_j\right\rangle
\right|
\\
&\leq
\|f\|_{L^1}
\left(
p_j\|g_x^j\|_\infty
|\bar{G}(s_2,t)-\bar{G}(s_1,t)|
+
\|g_y^j\|_\infty|s_2-s_1|
\right).
\end{align*}
Since $\bar{G}(s,t)$ is Lipschitz in $s$ with Lipschitz constant $C_T$,
\[
\left|
\left\langle t_{p_j\bar{G}(s_2,t),t-s_2}f,\pd_j\right\rangle
-
\left\langle t_{p_j\bar{G}(s_1,t),t-s_1}f,\pd_j\right\rangle
\right|
\leq
(C_T \vee 1) \|f\|_{L^1}
\left(
p_j\|g_x^j\|_\infty+\|g_y^j\|_\infty
\right)
|s_2-s_1|.
\]
Consequently,
\[
\sup_{t\leq T}
TV\left(
\left\langle
t_{p_j\bar{G}(\cdot,t),t-\cdot}f,\pd_j
\right\rangle
\right)_{[0,t]}
\leq
(C_T \vee 1) T\|f\|_{L^1}
\left(
p_j\|g_x^j\|_\infty+\|g_y^j\|_\infty
\right)
<\infty.
\]
\end{proof}
\begin{proof}[Proof of Theorem \ref{tightnessresult}]
Applying Theorem 4.1 of \cite{mitoma} and its extension to all of $\R_+$ in (R.2.2) of the same work, it suffices to check that each projection $\langle f,\hat{\ssp}^m_j(
    \cdot)\rangle $ is tight for all $f \in \mathscr{S}(\R_+^2).$
    Therefore, the tightness result follows immediately from Lemma \ref{componenttightnesslem} along with the C-tightness of $\{(\hat{G}^m(\cdot),{U}_f^{j,m}(\cdot))\}_{m=1}^{\infty}$.
\end{proof}
We now prove the final two main results. 

\begin{proof}[Proof of Theorems \ref{Llimit} and \ref{lhatconvergencethm}]
Let $\wmass(\hat{\boldsymbol{\tm}}(\cdot)),\hat{G}(\cdot))$ be a subsequential limit in distribution of $\{(\wmass(\hat{\boldsymbol{\tm}}^m(\cdot)),\hat{G}^m(\cdot))\}_{m=1}^{\infty}.$
By a slight abuse of notation, we will use $m,$ rather than $m_k,$ to index the converging subsequence.
Fix $f_1,\ldots,f_n\in\mathscr{S}(\R_+^2)$. Passing to a further
subsequence if necessary, we take a Skorokhod representation under
which
\[
\begin{gathered}
\wmass(\hat{\boldsymbol{\tm}}^m(\cdot)),\quad
\hat{G}^m(\cdot),\quad
\wmass(\bar{\boldsymbol{\tm}}^m(\cdot)),\quad
\bar{G}^m(\cdot),\\
\bar{\ap}_j^m(\cdot),\quad
\hat{\ap}_j^m(\cdot),\quad
U_1^{j,m}(\cdot),\quad
U_{f_a}^{j,m}(\cdot),\quad
\langle f_a,\hat{\ssp}_j^m(\cdot)\rangle,
\qquad a\in[n],\ j\in\J,
\end{gathered}
\]
converge jointly almost surely to their corresponding subsequential
limits.
It is immediate from the uniform convergence on compact sets of the integrand combined with bounded convergence that the integral relationship $\hat{G}^m(\cdot) = -\int_0^{\cdot}\frac{\wmass(\hat{\boldsymbol{\tm}}^m(u))}{\wmass(\bar{\boldsymbol{\tm}}^m(u))\wmass(\boldsymbol{\flm}(u))}du$ holds in the limit.
Therefore \eqref{LGsystem1} is proved.

We obtain \eqref{LGsystem2} by taking term-by-term limits of the expressions on the RHS of \eqref{mainMeq} for $f \equiv 1.$ 
Furthermore, it suffices to prove the equality for each fixed $t \geq 0$, as we have already established continuity of the limiting process $\wmass(\hat{\boldsymbol{\tm}}(\cdot))$, and $\R_+$ is a separable index set.
The convergence of $U_f^{j,m}(\cdot)$ is given by Lemma \ref{uconvlemma}.
For the second term on the right hand side of \eqref{mainMeq}, we will apply the mean value theorem to get it into the form of the second term in \eqref{LGsystem2}.
$$\sqrt{m}sgn(\bar{G}(0,t)-\bar{G}^m(0,t)) \int_{p_j\mathcal{G}_-^m(t)}^{p_j\mathcal{G}_-^m(t)+|p_j\hat{G}^m(t)|/\sqrt{m}}\int_t^{\infty}h^j(x,y)dy,$$
where $h^j(x,y)$ is the density of $\fl_j(0),$ which we've taken to be in $C^2_b(\R_+^2)$ and assumed to have finite, continuous marginals.
It follows that, for each $t\geq 0,$ the function
$h^{j,t}(x):= \int_t^{\infty}h^j(x,y)dy,$
is a jointly continuous function in $t,x$.
Therefore, applying the mean value theorem for integrals, the second term on the RHS of \eqref{mainMeq} simplifies to 
$$ -p_j\hat{G}^m(t)h^{j,t}(p_j\mathcal{G}^m_-(t) + \xi^m(t))$$
for some $\xi^m(t) \in [0,p_j|\hat{G}^m(t)|/\sqrt{m}]$ for each $m,t.$ Using the continuity of $h^{j,t},$ we obtain the limit $-p_j\hat{G}(t)\int_t^{\infty} h^j(p_j\bar{G}(t),y)dy.$

For the last term on the RHS of \eqref{mainMeq}, doing the same calculation, but with $g^{j,t}(x):=\int_t^{\infty}g^j(x,y)dy,$ we may rewrite the term as
$$-\int_0^tg^{j,t-s}(p_j\mathcal{G}^m_-(s,t)+\Upsilon^m(s,t))  p_j\hat{G}^m(s,t) d\bar{\ap}^m_j(s). $$
Then, applying a standard real analysis argument using the joint continuity of the function $g^{j,t}(x)$ in $t,x$ and the fact that the Lebesgue-Stieltjes measure on $[0,t]$ induced by $\bar{\ap}_j^m(s)$ converges weakly to the measure $\ar_j \lambda,$ where $\lambda$ is the Lebesgue measure on $[0,t],$ we obtain the last term in \eqref{LGsystem2}.
The interested reader may find the details of this real analysis argument in Lemma 9.2.2 of \cite{loeser2024fluid}.

It remains to prove uniqueness in distribution of solutions to the system of equations \eqref{LGsystem1}-\eqref{LGsystem2}.
Let $(\wmass(\hat{\boldsymbol{\tm}}(\cdot)),\hat{G}(\cdot))$, $(\wmass(\tilde{\boldsymbol{\tm}}(\cdot)),\tilde{G}(\cdot))$ be two different solutions to the system for the same realization of $\boldsymbol{U}_1(r,t)$.
Then, using the fact that $\wmass({\boldsymbol{\flm}}(\cdot))$ is uniformly bounded below by some $1/C,$ we obtain from \eqref{LGsystem1}
$$|\hat{G}(\cdot)-\tilde{G}(\cdot)| \leq \int_0^{\cdot} C^2 |\wmass(\hat{\boldsymbol{\tm}}(u))-\wmass(\tilde{\boldsymbol{\tm}}(u))|du.$$
Combining this with \eqref{LGsystem2}, we find that
\begin{align*}
    |\hat{G}(t)-\tilde{G}(t)|&\leq C^2\int_0^t |\wmass(\hat{\boldsymbol{\tm}}(r))-\wmass(\tilde{\boldsymbol{\tm}}(r))|dr\\
    & \leq \sum_{j=1}^J C^2 p_j  \int_0^tp_j\bigg|\hat{G}(r)-\tilde{G}(r)\bigg|h^{j,r}(p_j\bar{G}(r))dr\\
    &+ \sum_{j=1}^J C^2 p_j \int_0^t\int_0^r \ar_j p_j |\tilde{G}(r)-\hat{G}(r)|g^{j,r-s}(p_j\bar{G}(s,r)) dsdr\\
    &+\sum_{j=1}^J C^2 p_j \int_0^t\int_0^r \ar_j p_j |\tilde{G}(s)-\hat{G}(s)|g^{j,r-s}(p_j\bar{G}(s,r)) dsdr\\
\end{align*}
It follows that
$$|\hat{G}(\cdot)-\tilde{G}(\cdot)| \leq \sum_{j=1}^Jp_jC' \int_0^{\cdot} |\hat{G}(u)-\tilde{G}(u)|du,$$
where the constant $C'$ depends only on $\ar_j,T, C$ and the sup norms of the continuous functions $h^{j,t}(x),$ $g^{j,t}(x)$ over the compact region $t \leq T, x \leq \bar{G}(t).$
Uniqueness of solutions for $\hat{G}(\cdot)$ then follows from Gr{\"o}nwall's inequality.
Uniqueness of solutions of $\wmass(\hat{\boldsymbol{\tm}}(\cdot))$ then follows from \eqref{LGsystem2}.
Because all realizations of $\boldsymbol{U}_1(r,t)$ are the same in distribution, this completes the proof.

Lastly, we must prove that limits of $\{\langle f_1, \hat{\boldsymbol{\ssp}}^m(\cdot)\rangle,...,\langle f_n, \hat{\boldsymbol{\ssp}}^m(\cdot)\rangle \}_{m=1}^{\infty}$ satisfy \eqref{rjlimitdef} for $f_1,...,f_n \in \mathscr{S}(\R_+^2)$. Once again,
we will prove convergence of \eqref{mainMeq} term-by-term to \eqref{rjlimitdef}.
Most of the elements of the proof will be the same as in the $f \equiv 1$ case except that the prelimit and limiting expressions for the second and last terms on the RHS of \eqref{mainMeq} will be different. In particular, we rewrite the second term as 
\begin{align*}
    \sqrt{m}\langle t_{p_j\bar{G}^m(t),t}f-t_{p_j\bar{G}(t),t}f, \fl_j(0)\rangle &=\sqrt{m}\int_{\R_+^2} (h^j(x+p_j\bar{G}^m(t),y+t)-h^j(x+p_j\bar{G}(t),y+t))f(x,y) d\lambda\\
    &=\int_{\R_+^2} p_j\hat{G}^m(t)h^j_1(\xi^m(x,t),y+t)f(x,y) d\lambda
\end{align*}
where $\xi^m(x,t) \in [x+p_j\mathcal{G}_-^m(t),x+p_j\mathcal{G}_+^m(t)]$ for each choice of $m,x,t.$
For the last term on the RHS of \eqref{mainMeq}, we obtain
$$\int_0^t \int_{\R_+^2} p_j\hat{G}^m(s,t)g^j_1( \Upsilon^m(x,s,t),y+t-s)f(x,y) d\lambda d\bar{\ap}_j^m(s)$$
where $\Upsilon^m(x,s,t) \in [x+p_j\mathcal{G}_-^m(s,t),x+p_j\mathcal{G}_+^m(s,t)].$
The rest of the proof of convergence is the same as the $f \equiv 1$ case. 
\end{proof}

\noindent {\bf Acknowledgements} The research reported in this paper was supported in part by NSF RTG grant DMS-2134107.
 Lastly, the author used ChatGPT for grammar, copyediting, organization, proof-checking, and phrasing suggestions. Furthermore, author would like to note that, in the process of proofing this paper, the author gave the proof of Lemma \ref{bdeltabound} to ChatGPT Thinking 5-6 to check over for errors. It noticed a mistake in the second half of the proof, and suggested a great idea for fixing it by breaking up the interval $[0,T]$ into intervals $I_k^{\delta},$ $k=0,...,K_{\delta}-1$.
That part of the proof is written by the author of the paper, but credit for the idea goes to ChatGPT.
After this, the author was careful to ask the LLM to help proofread but not provide any solutions to errors found.
Aside from this, no mathematical results or proofs were generated by the tool, and the author reviewed and edited all AI-assisted text.
Any errors in the above work are the responsibility of the author.

\vspace{0.3cm}

	\small{
		\bibliographystyle{amsalpha}
		\bibliography{ppsdiffbib}
	}

\end{document}